\documentclass[11pt]{article}
\usepackage[a4paper,margin=1in]{geometry}

\usepackage{amsmath,amssymb,amsthm}
\usepackage{graphicx}
\usepackage{enumitem}
\usepackage{booktabs}
\usepackage{xcolor}
\usepackage{bbm}
\usepackage{hyperref}
\usepackage[capitalise]{cleveref}
\hypersetup{hidelinks}

\numberwithin{equation}{section}
\counterwithin{figure}{section}

\theoremstyle{plain}
\newtheorem{theorem}{Theorem}[section]
\newtheorem{lemma}[theorem]{Lemma}

\newtheorem{cor}[theorem]{Corollary}

\newtheorem{conj}[theorem]{Conjecture}
\newtheorem{claim}[theorem]{Claim}

\theoremstyle{definition}
\newtheorem{definition}[theorem]{Definition}
\newtheorem{problem}[theorem]{Problem}
\newtheorem{question}[theorem]{Question}

\newtheorem{remark}[theorem]{Remark}

\newcommand{\ex}{\mathrm{ex}}

\title{Minimum-degree thresholds beyond bounded colourability}

\author{
Xinqi Huang\thanks{School of Mathematical Sciences, University of Science and Technology of China, Hefei, China and Extremal Combinatorics and Probability Group (ECOPRO), Institute for Basic Science (IBS), Daejeon, South Korea.
Email: \texttt{huangxq@mail.ustc.edu.cn}. Supported by the USTC Excellent PhD Students Overseas, the Institute for Basic Science (IBS-R029-C4), the National Key Research and Development Programs of China 2023YFA1010200, the NSFC under Grants No. 12171452 and No. 12231014 and Innovation Program for Quantum Science and Technology 2021ZD0302902.
}
\and
Hong Liu\thanks{Extremal Combinatorics and Probability Group (ECOPRO), Institute for Basic Science (IBS), Daejeon, South Korea. Emails: \texttt{hongliu@ibs.re.kr}. Supported by the Institute for Basic Science (IBS-R029-C4).}
}
\date{}
\begin{document}

\maketitle

\begin{abstract}
We survey recent developments on minimum-degree thresholds in extremal graph theory, with an emphasis on structural refinements of the chromatic-threshold problem. The chromatic threshold asks when a linear minimum-degree condition forces bounded chromatic number in an \(H\)-free graph; its complete classification shows a striking finite trichotomy. We discuss several stronger and related threshold notions in which bounded colourability is replaced by bounded \(H\)-free homomorphic images, bounded neighbourhood complexity via VC-dimension, exact blowup structure for maximal \(H\)-free graphs, and improved quantitative bounds in graph removal lemmas. In contrast with the rigidity of the chromatic threshold, these newer parameters exhibit richer behaviour, including separations from \(\delta_\chi(H)\), new exact values, accumulation points in fixed-chromatic spectra, positivity phenomena for 3-chromatic graphs, asymmetric variants, and failures of monotonicity.
\end{abstract}

\section{Introduction}\label{sec:intro}

A guiding goal in extremal graph theory is to understand what global structure is forced when a large graph avoids a fixed configuration.
Classical results such as Tur\'an's theorem and the Erd\H{o}s--Stone--Simonovits theorem~\cite{1966ES,1946ErodsBAMS}
determine the asymptotic maximum edge density of an $H$-free graph:
if $\chi(H)=r+1$, then
\[
    \ex(n,H)=\Bigl(1-\frac1r+o(1)\Bigr)\binom n2.
\]
Moreover, stability theory shows that near-extremal $H$-free graphs are close to balanced $r$-partite templates.
However, this edge-density viewpoint alone does not control chromatic or bounded-template structure.
Indeed, by Erd\H{o}s' high-girth high-chromatic construction~\cite{1959Erdos}, one can force arbitrarily large chromatic number while avoiding any fixed non-bipartite graph.
Even more, if density is measured only by average degree or by the total number of edges, one may plant such a high-chromatic graph on a small exceptional set inside an otherwise extremal template, changing the edge density by only $o(n^2)$.

This motivates a more robust local density condition: a linear minimum-degree assumption.
A typical threshold problem asks for the least $\alpha$ such that every sufficiently large $H$-free graph $G$ with
$\delta(G)\ge \alpha |V(G)|$
must satisfy a prescribed structural conclusion.
A classical prototype is the theorem of Andr\'asfai, Erd\H{o}s and S\'os~\cite{AES74}: every triangle-free graph $G$ on $n$ vertices with
$\delta(G)> 2n/5$
is bipartite.
More generally, their theorem gives sharp minimum-degree conditions forcing $K_{r+1}$-free graphs to be $r$-partite.
This suggests a softer but more flexible question: for a fixed forbidden graph $H$, how large must the minimum degree be in order to force bounded chromatic number?

This leads to the \emph{chromatic threshold}, introduced implicitly by Erd\H{o}s and Simonovits~\cite{1973ErdosSimonovits}:
\begin{align*}
    \delta_{\chi}(H):=\inf\Bigl\{&\alpha\ge 0:\exists\,C=C(\alpha,H)\ \text{s.t. $\forall$ $n$-vertex $H$-free graph $G$, }\ \\
    &\delta(G)\ge \alpha n \Rightarrow \chi(G)\le C\Bigr\}.
\end{align*}
The study of this parameter began with the triangle case, where Hajnal's construction and Thomassen's matching upper bound give $\delta_\chi(K_3)=1/3$.
It was then extended to cliques by Goddard--Lyle~\cite{goddard_dense_2011} and independently Nikiforov~\cite{nikiforov_chromatic_2010}, while Thomassen showed that longer odd cycles have vanishing chromatic threshold.
The general problem was completely resolved by Allen, B\"ottcher, Griffiths, Kohayakawa and Morris~\cite{ALLEN2013261}.
They proved that if $\chi(H)=r\ge 3$, then
\[
    \delta_\chi(H)\in
    \left\{
        \frac{r-2}{r-1},\,
        \frac{2r-5}{2r-3},\,
        \frac{r-3}{r-2}
    \right\},
\]
with the exact value determined by structural properties of $H$.
Thus, once the chromatic number of $H$ is fixed, the chromatic threshold has a remarkably rigid finite spectrum.

The main purpose of this survey is to discuss what happens when bounded colourability is replaced by stronger or different notions of low complexity.
The chromatic-threshold theorem shows that bounded colourability is governed by a remarkably rigid trichotomy.
A central theme of this survey is that this rigidity is exceptional: when one asks instead for bounded homomorphic templates, bounded neighbourhood complexity, exact blowup structure, or improved removal behaviour, the corresponding minimum-degree thresholds display a much richer and less monotone landscape.

We now turn to several descendants of the chromatic-threshold problem.
The first is the \emph{homomorphism threshold}.
Already in Thomassen's early work on dense triangle-free graphs~\cite{thomassen_chromatic_2002}, one sees, at least implicitly, a strengthening of the chromatic-threshold question: instead of asking only for bounded chromatic number, equivalently for a homomorphism to a bounded complete graph, one asks whether every sufficiently dense \(H\)-free graph admits a homomorphism to a bounded \(H\)-free graph.
Thus the conclusion is no longer merely bounded colourability, but bounded \(H\)-free template structure.

A second group of questions is motivated by VC-dimension.
Originating in the theory of Vapnik--Chervonenkis as a measure of the complexity of set systems, VC-dimension has become a central tool in combinatorics, discrete geometry and learning theory.
In the chromatic-threshold problem it appears naturally through the neighbourhood set system of a graph.
The work of {\L}uczak--Thomass\'e~\cite{luczak_coloring_2010} showed that VC-dimension can play an important role both as a proof tool and as a guiding structural principle.
Related neighbourhood-complexity ideas also play a role in the final classification theorem of Allen, B\"ottcher, Griffiths, Kohayakawa and Morris~\cite{ALLEN2013261}.
This role of VC-dimension led to minimum-degree thresholds for bounded VC-dimension itself, and also to VC-bounded variants of chromatic and homomorphism thresholds; the precise references and formulations will be given in \Cref{sec:vc-thresholds}.

A further strengthening of the homomorphism viewpoint is the blowup threshold.
Introduced in the context of bounded-template problems~\cite{huang_interpolating_2025}, this parameter asks not only whether a dense \(H\)-free graph maps to a bounded \(H\)-free template, but whether, under a maximality assumption, the graph itself must be a blowup of such a bounded template.

It is useful to keep the following informal hierarchy in mind:
\begin{align*}
\text{bounded blowup structure}
&\Longrightarrow \text{bounded }H\text{-free homomorphic image}\\
&\Longrightarrow \text{bounded colouring}.
\end{align*}
These implications compare the structural conclusions forced by the
parameters; the associated thresholds need not be equal. The universal
threshold inequalities that follow from them are recorded in
Section~\ref{sec:conclusion}.

Finally, the same minimum-degree philosophy also interacts with graph removal.
The ordinary graph removal lemma has very poor quantitative dependence in general.
Minimum-degree removal thresholds ask whether a linear lower bound on \(\delta(G)\) can improve this dependence, for instance to a polynomial or even linear bound.
Thus minimum degree is used not to force bounded colourability or bounded templates, but to improve quantitative removal and supersaturation phenomena.

We survey these derived minimum-degree threshold notions, summarize their recent progress, and compare their behaviour with the original chromatic threshold.
In some cases they behave like \(\delta_\chi(H)\), while in others they reveal new values, richer spectra, positivity phenomena, or failures of monotonicity that have no analogue in the classical chromatic-threshold classification.

\medskip
\noindent\textbf{Outline.}
In \Cref{sec:hom-threshold} we survey homomorphism thresholds, beginning with the triangle and clique cases and then discussing odd cycles, forbidden families of short odd cycles, and recent phenomena such as new values, accumulation points, and non-monotonicity.
In \Cref{sec:vc-thresholds} we discuss VC thresholds and VC-bounded threshold problems, including VC-bounded chromatic and homomorphism thresholds and asymmetric variants interpolating between them.
In \Cref{sec:blowup-threshold} we introduce blowup thresholds and compare them with chromatic and homomorphism thresholds.
In \Cref{sec:removal} we briefly discuss minimum-degree removal thresholds.
In \Cref{sec:conclusion} we collect several connections and possible organising principles for this broader minimum-degree threshold landscape.

\section{Homomorphism thresholds}\label{sec:hom-threshold}

We now formalize the bounded-template viewpoint mentioned in the introduction.
The chromatic threshold asks when a linear minimum-degree condition forces
bounded chromatic number. Since a proper \(q\)-colouring of a graph \(G\) is
equivalently a homomorphism from \(G\) to the complete graph \(K_q\), bounded
colourability can be rephrased as the existence of a homomorphism to some
bounded graph. The homomorphism-threshold problem asks for a stronger conclusion:
can the bounded target be chosen to be \(H\)-free?

Given graphs \(G\) and \(F\), we write
$G \xrightarrow{\mathrm{hom}} F$
if there exists a map \(\varphi:V(G)\to V(F)\) such that
\[
    uv\in E(G)\quad\Longrightarrow\quad
    \varphi(u)\varphi(v)\in E(F).
\]
Thus a proper \(q\)-colouring of \(G\) is precisely a homomorphism
\(G\xrightarrow{\mathrm{hom}}K_q\). In the homomorphism-threshold setting,
however, the target graph is required not only to be bounded, but also to be
\(H\)-free. This additional requirement is the essential difference between
bounded colourability and bounded \(H\)-free template structure.

\begin{definition}[Homomorphism threshold]\label{def:hom-threshold}
For a fixed graph \(H\), the \emph{homomorphism threshold} of \(H\) is
\begin{align*}
    \delta_{\mathrm{hom}}(H)
    :=
    \inf\Bigl\{
    &\alpha> 0:
    \exists\, H\text{-free }F=F(\alpha,H)\ \text{s.t. } \forall\,
    n\text{-vertex }H\text{-free graph }G,\ \\
    &\delta(G)\ge \alpha n \Rightarrow G\xrightarrow{\mathrm{hom}}F
    \Bigr\}.
\end{align*}
\end{definition}

Equivalently, \(\delta_{\mathrm{hom}}(H)\) is the minimum-degree density above
which every \(H\)-free graph admits a homomorphism to an \(H\)-free graph with
boundedly many vertices. Since every homomorphism to a bounded graph gives a
bounded colouring, we always have
\[
    \delta_\chi(H)\le \delta_{\mathrm{hom}}(H).
\]
The inequality expresses the basic hierarchy between the two problems:
the chromatic threshold asks for bounded complexity, whereas the homomorphism
threshold asks for bounded complexity of a form that still avoids \(H\).

The first basic test case is when the forbidden graph is a clique.
In this case the stronger requirement that the bounded target also be
\(K_s\)-free turns out not to change the threshold.
This makes cliques the natural starting point for the theory.

\subsection{Triangles and cliques}\label{subsec:hom-triangles-cliques}

We begin with triangle-free graphs. H\"aggkvist~\cite{haggkvist_odd_1982}
proved an early bounded-template result: every triangle-free graph \(G\) on
\(n\) vertices with
$\delta(G)>3n/8$
is homomorphic to \(C_5\). 
This gives the first explicit upper bound
$\delta_{\mathrm{hom}}(K_3)\le 3/8$.
The sharp threshold was first obtained by {\L}uczak~\cite{luczak_structure_2006}.

\begin{theorem}[{\L}uczak]\label{thm:hom-triangle}
One has
\[
    \delta_{\mathrm{hom}}(K_3)\le \frac13.
\]
Consequently,
\[
    \delta_{\mathrm{hom}}(K_3)
    =
    \delta_\chi(K_3)
    =
    \frac13.
\]
\end{theorem}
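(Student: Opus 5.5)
The plan is to prove the upper bound $\delta_{\mathrm{hom}}(K_3)\le 1/3$ by the regularity method, following {\L}uczak's strategy. The equality then follows at once. We have $\delta_\chi(K_3)\le\delta_{\mathrm{hom}}(K_3)$, and Hajnal's construction, recalled in the introduction, gives $\delta_\chi(K_3)\ge 1/3$. So only the upper bound needs work.

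Fix $\alpha=1/3+\gamma$ with $\gamma>0$, and let $G$ be an $n$-vertex triangle-free graph with $\delta(G)\ge\alpha n$. Apply Szemerédi's regularity lemma with $\varepsilon\ll\gamma$ to obtain a partition $V_0\cup V_1\cup\dots\cup V_k$ with $k\le K(\varepsilon)$. Each vertex $v$ is then described by its neighbourhood profile: the vector $\bigl(\lfloor |N(v)\cap V_i|/(\eta|V_i|)\rfloor\bigr)_{i\le k}$ for a small $\eta$. There are at most $(1/\eta+1)^k$ profiles, a bound depending only on $\gamma$. Define $F$ to have one vertex per profile, with two profiles adjacent if there are clusters $V_i$ carrying substantial density in both (density at least $\eta$), joined in the reduced graph by a dense regular pair. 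Adjust this rule so that every edge $uv$ of $G$ has profile-classes adjacent in $F$. The natural map $v\mapsto$ profile then needs to be a homomorphism; in fact it is easiest to define $F$ as the graph induced on profile classes by the edges of $G$, restricted to typical vertices.

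The key point, and the main obstacle, is showing that the bounded target $F$ is triangle-free. A triangle in $F$ would come from three vertices $x,y,z$ of $G$ (possibly in different classes) whose neighbourhoods pairwise "interact" densely through regular pairs. Using $\delta>(1/3+\gamma)n$, each neighbourhood covers more than a third of the vertices. Pigeonhole then forces two of the three neighbourhoods, or a neighbourhood and a regular pair, to overlap in a dense regular structure. The counting lemma for triangles would then produce a genuine triangle in $G$, a contradiction. Making this rigorous is delicate. Edges between the classes of $x$ and $y$ only say that some pair of representatives is adjacent, so one must replace individual vertices by their clusters' typical behaviour. This is the {\L}uczak argument: vertices with the same profile have neighbourhoods differing by $o(n)$, and triangle-freeness of $G$ together with $\delta>n/3$ makes "being adjacent" essentially a property of the profiles.

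To handle this cleanly I would first remove a small exceptional set: the vertices not typical with respect to the regular partition, plus $V_0$. I would argue that each exceptional vertex can be mapped to the profile class of a vertex whose neighbourhood is approximately contained in its own. Such a vertex exists, since its neighbourhood has size $>n/3$ and by triangle-freeness is independent, hence nearly a union of cluster parts. Mapping to it preserves edges and introduces no triangles. This merging step, ensuring homomorphisms are preserved while keeping the target triangle-free, is where I expect most of the technical effort to go. The threshold $1/3$ enters exactly in the overlap argument, since three independent neighbourhood sets of size $>n/3$ cannot be pairwise disjoint.
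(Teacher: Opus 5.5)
\textbf{The proposal has a genuine gap.} Your deduction of the equality from the upper bound, via $\delta_\chi(K_3)\le\delta_{\mathrm{hom}}(K_3)$ and Hajnal's construction, matches the paper. The upper-bound argument, however, fails exactly at the step you call the key point.

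Everything rests on the claim that vertices with the same profile have neighbourhoods differing by $o(n)$, so that adjacency is essentially a property of profiles. The regularity lemma gives nothing of this kind, and for a general triangle-free $G$ the claim is false. Keep each edge of a balanced blowup of $C_5$ independently with probability $9/10$. The result is triangle-free with $\delta(G)\ge 0.35n$ with high probability. Yet with high probability any two distinct vertices satisfy $|N(u)\triangle N(v)|\ge 0.07n$. So \emph{no} partition into boundedly many classes, by profiles or otherwise, has nearly identical neighbourhoods inside its classes. This $G$ still maps to $C_5$, so what fails is your mechanism, not the theorem.

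Without that property, an edge between two profile classes tells you nothing about their other members. So neither the triangle-freeness of the graph induced on profile classes nor the pigeonhole/counting-lemma sketch goes through. The exceptional-vertex step has the same defect. Sending $v$ to the class of a vertex $w$ with $N(w)$ approximately contained in $N(v)$ preserves the edges at $v$ only if two vertices with few common neighbours must be adjacent. You have no such guarantee, and the existence of such a $w$ is asserted rather than proved.

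\textbf{The missing ingredient is maximality, which you never invoke.} The paper proceeds as follows.
\begin{enumerate}[label=(\roman*)]
\item It adds edges until $G$ is maximal triangle-free. This keeps the minimum degree, and a homomorphism of the supergraph restricts to $G$.
\item It proves a codegree dichotomy, writing $\delta(G)\ge(1/3+\varepsilon)n$. Adjacent vertices have no common neighbour, while every non-edge $uv$ has $|N(u)\cap N(v)|\ge 3\varepsilon n$. To see this, take a common neighbour $w$ and note that $N(w)$ is disjoint from $N(u)\cup N(v)$.
\item It combines the {\L}uczak--Thomass\'e bound on $\mathrm{VC}(G)$ for maximal triangle-free graphs above $1/3$ with Haussler's packing lemma. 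This partitions \emph{all} of $V(G)$ into boundedly many classes with $|N(u)\triangle N(v)|\le 3\varepsilon n/5$ inside each class. This is the rigorous version of your ``same profile, nearly the same neighbourhood''.
\item The dichotomy then makes every class independent and every pair of classes complete or empty. Hence the quotient is a bounded triangle-free graph of which $G$ is a blowup, with no exceptional set and no counting lemma.
\end{enumerate}

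Given such near-twin classes, your closing observation would also show the quotient is triangle-free: three neighbourhoods of size $>(1/3+\varepsilon)n$ cannot be pairwise almost disjoint. What you must supply is the reduction to the maximal case and an actual proof that near-twin classes exist. That is where the real work lies.
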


The lower bound follows immediately from the chromatic-threshold lower bound
\(\delta_\chi(K_3)\ge 1/3\), since
$\delta_\chi(K_3)\le \delta_{\mathrm{hom}}(K_3)$.

\subsubsection{Proof of \Cref{thm:hom-triangle} via VC-dimension theory}
\label{subsubsec:triangle-vc-packing}

We give a short proof of \Cref{thm:hom-triangle}
using the VC-dimension and packing-lemma method behind the blowup theorem of
Liu--Shangguan--Skokan--Xu~\cite{liu_beyond_2024}. The point of the argument is
not merely to obtain a bounded colouring, but to produce a bounded
triangle-free quotient.

We first recall the relevant VC-dimension terminology.

\begin{definition}[VC-dimension]\label{def:VC-dim-set-system}
Let \(X\) be a finite set and let \(\mathcal F\subseteq 2^X\) be a set system.
A subset \(S\subseteq X\) is \emph{shattered} by \(\mathcal F\) if
\[
    \{F\cap S:F\in\mathcal F\}=2^S.
\]
The \emph{VC-dimension} of \(\mathcal F\), denoted
\(\mathrm{VC}(\mathcal F)\), is the maximum size of a subset of \(X\) shattered
by \(\mathcal F\).
\end{definition}

For a graph \(G\), we will use the VC-dimension of its neighbourhood set system
\[
    \mathcal N(G):=\{N_G(v):v\in V(G)\}\subseteq 2^{V(G)}.
\]
Thus $\mathrm{VC}(G):=\mathrm{VC}(\mathcal N(G))$.
This is the same notion of graph VC-dimension that appears in the
VC-dimension approach to chromatic thresholds, for instance in the work of
{\L}uczak--Thomass\'e~\cite{luczak_coloring_2010}.

The only general VC-theoretic input we need is Haussler's packing lemma.

\begin{lemma}[Haussler packing lemma, graph form~\cite{haussler_packing_1995,liu_beyond_2024}]
\label{lem:haussler-packing-graph}
Let \(G\) be an \(n\)-vertex graph with $\mathrm{VC}(G)\le d$.
For every \(s>0\), there is a partition
\begin{align*}
    V(G)=V_1\cup\cdots\cup V_m
    \quad\text{  with  } \quad
    m\le e(d+1)\left(\frac{2en}{s}\right)^d
\end{align*}
such that for every \(i\in[m]\) and every \(u,v\in V_i\),
$|N_G(u)\triangle N_G(v)|\le 2s$.
\end{lemma}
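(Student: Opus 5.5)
The plan is to derive the graph form from Haussler's original packing theorem for set systems, using a greedy maximal-separated-family argument. Recall Haussler's theorem~\cite{haussler_packing_1995}: if \(\mathcal F\subseteq 2^X\) with \(|X|=n\) has \(\mathrm{VC}(\mathcal F)\le d\), and \(\mathcal P\subseteq\mathcal F\) is \emph{\(s\)-separated}, then
\[
|\mathcal P|\le e(d+1)\left(\frac{2en}{s}\right)^d .
\]
Here \(s\)-separated means \(|A\triangle B|>s\) for all distinct \(A,B\in\mathcal P\). We apply this with \(X=V(G)\) and \(\mathcal F=\mathcal N(G)\), so the hypothesis \(\mathrm{VC}(G)\le d\) is exactly what is needed.

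\textbf{Step 1: choose centres.} Greedily pick vertices \(v_1,\dots,v_m\) such that
\[
|N_G(v_i)\triangle N_G(v_j)|>s \quad\text{for all } i\neq j,
\]
and continue until no further vertex can be added. In particular the sets \(N_G(v_i)\) are pairwise distinct. The family \(\{N_G(v_1),\dots,N_G(v_m)\}\) is then an \(s\)-separated subfamily of \(\mathcal N(G)\), so Haussler's theorem gives the required bound on \(m\).

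\textbf{Step 2: assign vertices to centres.} By maximality, every vertex \(u\) has some \(i\) with \(|N_G(u)\triangle N_G(v_i)|\le s\). Assign \(u\) to the least such \(i\), and let \(V_i\) be the set of vertices assigned to \(i\). Each \(v_i\) is assigned to itself, since any other centre is at distance \(>s\). Hence the \(V_i\) are nonempty and partition \(V(G)\).

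\textbf{Step 3: control each part.} Symmetric difference gives a metric on \(2^{V(G)}\), namely the Hamming distance between indicator vectors. So for \(u,w\in V_i\),
\[
|N_G(u)\triangle N_G(w)|\le |N_G(u)\triangle N_G(v_i)|+|N_G(v_i)\triangle N_G(w)|\le 2s .
\]

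The only substantive step is Haussler's theorem itself. If it must be reproved rather than cited, I would follow Haussler's argument.
\begin{itemize}[leftmargin=*]
\item Restrict \(\mathcal P\) to a uniformly random set \(I\subseteq X\) of size about \(2dn/s\).
\item Consider the one-inclusion graph of the restricted family. By shifting, it has at most \(d\) times as many edges as vertices.
\item Double count pairs of sets that agree on \(I\) minus one coordinate, using that separated pairs are unlikely to collapse on \(I\).
\end{itemize}
Matching the exact constant \(e(d+1)(2en/s)^d\) requires Haussler's careful choice of \(|I|\). That computation is the main obstacle; the graph-form reduction above is routine.
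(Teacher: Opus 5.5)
Your proposal is correct. The paper does not prove this lemma; it only cites Haussler and Liu--Shangguan--Skokan--Xu, and your reduction is the standard derivation of the graph form from the set-system version: take a maximal family of vertices whose neighbourhoods are pairwise more than $s$ apart, bound its size by Haussler's packing theorem applied to $\mathcal N(G)$, assign each vertex to a centre within distance $s$, and use the triangle inequality for $\triangle$. If you invoke Haussler's theorem in its integer form (separation $|A\triangle B|\ge k$), apply it with $k=\lfloor s\rfloor+1>s$, which gives $e(d+1)(2en/s)^d$ directly; the case $s\ge n$ is trivial, since then $m=1$.
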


We will also use the following result without proof.

\begin{lemma}[VC-dimension reduction above \(1/3\), maximal form~\cite{luczak_coloring_2010}]
\label{lem:vc-reduction-triangle}
For every \(\varepsilon>0\), there exists \(d=d(\varepsilon)\) such that every maximal triangle-free graph \(G\) on \(n\) vertices with
$\delta(G)\ge \left(\frac13+\varepsilon\right)n$
satisfies $\mathrm{VC}(G)\le d$.
\end{lemma}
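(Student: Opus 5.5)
The plan is to argue by contradiction, combining the shattering property with the two structural features of a maximal triangle-free graph \(G\). First, every neighbourhood \(N(v)\) is an independent set, so adjacent vertices have disjoint neighbourhoods. Second, by maximality, any two non-adjacent vertices have a common neighbour. With \(\delta(G)\ge(\frac13+\varepsilon)n\), any three vertices have neighbourhoods of total size \(>n+3\varepsilon n\). So among any three vertices, some two must have neighbourhoods meeting in at least \(\varepsilon n\) vertices. In particular, no three vertices can be pairwise adjacent, which is trivial, but also no three can have pairwise \emph{nearly} disjoint neighbourhoods. The goal is to show that a large shattered set produces three vertices whose neighbourhoods are pairwise almost disjoint.

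Suppose \(S\subseteq V(G)\) with \(|S|=d\) is shattered, and for each \(T\subseteq S\) fix a witness \(v_T\) with \(N(v_T)\cap S=T\). The first observation is that \(S\) is independent: if \(x,y\in S\) were adjacent, then \(v_{\{x,y\}}xy\) would be a triangle. More generally, whenever \(T\cap T'\neq\emptyset\), the vertices \(v_T\) and \(v_{T'}\) have a common neighbour, so they are non-adjacent. Next, partition \(S\) into \(k\) blocks \(A_1,\dots,A_k\) of equal size, with \(k\) large depending on \(\varepsilon\). Set \(w_i=v_{A_i}\) and \(z_i=v_{S\setminus A_i}\). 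For \(i\ne j\), \(w_i\) and \(z_j\) share the neighbours \(A_i\), so \(w_iz_j\notin E(G)\) and \(N(w_i)\cap N(z_j)\supseteq A_i\). The idea is to pass, via Ramsey's theorem on the finitely many possible configurations among the \(w_i,z_i\) (adjacency, and whether \(|N(\cdot)\cap N(\cdot)|\) exceeds \(\varepsilon n/10\)), to a large homogeneous subfamily. Then a double-counting argument over \(\sum_i \mathbbm 1_{N(w_i)}\) shows that if the \(N(w_i)\) pairwise overlap in \(\ge\varepsilon n/10\) vertices, some vertex lies in many \(N(w_i)\). Its neighbourhood, an independent set of size \(>n/3\), is then forced to avoid the many sets \(A_i\cup N(w_i)\)-type regions that it would have to miss. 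If instead the \(N(w_i)\) are pairwise almost disjoint, three of them already have total size exceeding \(n\), which is a contradiction.

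I expect the main obstacle to be the middle step: converting the purely combinatorial shattering information, which only controls adjacencies into the set \(S\), into \emph{quantitative} information about the sizes of neighbourhood intersections \(|N(u)\cap N(v)|\) on the whole vertex set. The size of \(S\) is bounded, so the common neighbours it provides contribute negligibly to these intersections. My first attempt would be to use the dual system as well. The sets \(N(x)\), \(x\in S\), are independent sets of size \(>n/3\), and the witnesses \(v_T\) realise all traces. A bounded-dimension fractional argument then gives a probability-style bound: take \(x\) uniform in \(S\) and \(T\) random, and show that \(\mathbb E|N(v_T)\cap N(v_{S\setminus T})|\) must be small while the degree sum forces it to be \(\Omega(\varepsilon n)\). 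Choosing \(d\) exponential in \(1/\varepsilon\) should then close the argument.
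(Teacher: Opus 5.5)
Your proposal has a genuine gap. The step you flag as the main obstacle is exactly where the argument is missing, and neither route you sketch gets past it.

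\textbf{The Ramsey branch.} Suppose a vertex $x$ lies in $N(w_i)$ for all $i$ in a large set $I$. This only tells you that $N(x)$ is disjoint from $\bigcup_{i\in I}N(w_i)$. The $N(w_i)$ are only known to overlap pairwise, and they differ on $S$ only in a negligible way. So nothing prevents this union from being barely larger than a single neighbourhood, which leaves ample room for $N(x)$, and no contradiction follows.

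\textbf{The fallback estimate.} The claim that the degree sum forces $|N(v_T)\cap N(v_{S\setminus T})|=\Omega(\varepsilon n)$ is false. Nothing stops $v_T$ and $v_{S\setminus T}$ from being adjacent, and then their common neighbourhood is empty. For example, take a balanced blowup of $C_5$ with parts $V_0,\dots,V_4$. It is maximal triangle-free with $\delta=2n/5$. Let $S=\{x_0,x_2\}$ with $x_0\in V_0$ and $x_2\in V_2$. Every possible witness pair $(v_T,v_{S\setminus T})$ lies in two adjacent parts: $(V_0\text{ or }V_2,\,V_1)$ or $(V_4,V_3)$. So the intersection is empty for every $T$. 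Complementary traces push witnesses towards adjacency, not towards large common neighbourhoods.

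\textbf{The missing idea.} You use the degree condition only through triples of vertices. You need its quantitative interaction with maximality, which the paper proves as Claim~\ref{claim:triangle-large-codegree}. If $uv\notin E(G)$, pick a common neighbour $w$. Then $N(w)$ is disjoint from $N(u)\cup N(v)$, so
\[
|N(u)\cap N(v)|\ge d(u)+d(v)+d(w)-n\ge 3\varepsilon n.
\]
Thus every non-edge has linear codegree.

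\textbf{The right pairing.} Do not pair $v_T$ with $v_{S\setminus T}$. Pair each $x\in S$ with the witness $u_x$ satisfying $N(u_x)\cap S=S\setminus\{x\}$. Since $S$ is independent (as you observed), for $|S|\ge 2$ we have $u_x\ne x$, and $xu_x\notin E(G)$. Hence $|N(x)\cap N(u_x)|\ge 3\varepsilon n$. Any $y\in N(x)\cap N(u_x)$ is adjacent to $x$. By triangle-freeness, $y$ is adjacent to no vertex of $S\setminus\{x\}\subseteq N(u_x)$, so $N(y)\cap S=\{x\}$. The sets $N(x)\cap N(u_x)$ for $x\in S$ are therefore pairwise disjoint, each of size at least $3\varepsilon n$. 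This gives $|S|\le 1/(3\varepsilon)$.

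No block decomposition, Ramsey step or fractional argument is needed. The paper itself quotes the lemma from {\L}uczak--Thomass\'e without proof, but it follows in a few lines from the codegree claim established there.
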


The maximality assumption is essential here: dense bipartite graphs can be triangle-free, have minimum degree larger than \(n/3\), and still have unbounded neighbourhood VC-dimension.

\begin{proof}[Proof of \Cref{thm:hom-triangle}]
It remains to prove the upper bound
\[
    \delta_{\mathrm{hom}}(K_3)\le \frac13.
\]
Equivalently, we show that for every \(\varepsilon>0\), every triangle-free
graph \(G\) with
\[
    \delta(G)\ge \left(\frac13+\varepsilon\right)|V(G)|
\]
admits a homomorphism to a triangle-free graph whose order is bounded in terms
of \(\varepsilon\) only.

We may assume that \(G\) is maximal triangle-free. Indeed, starting from \(G\),
add edges until obtaining a maximal triangle-free supergraph \(G'\) on the same
vertex set. The minimum degree does not decrease, and every homomorphism
\(G'\to F\) to a triangle-free graph is also a homomorphism \(G\to F\).
Let \(n=|V(G)|\), and assume from now on that \(G\) is maximal triangle-free and
\[
    \delta(G)\ge \left(\frac13+\varepsilon\right)n.
\]
We claim the following.

\begin{claim}\label{claim:triangle-large-codegree}
For every non-edge \(uv\notin E(G)\),
$|N_G(u)\cap N_G(v)|\ge 3\varepsilon n$.
\end{claim}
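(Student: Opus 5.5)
The plan is to combine maximality with a short inclusion--exclusion count. Fix a non-edge \(uv\notin E(G)\) with \(u\neq v\). Since \(G\) is maximal triangle-free, adding the edge \(uv\) must create a triangle. So there is a vertex \(w\) with \(uw,vw\in E(G)\), that is, a common neighbour of \(u\) and \(v\). The whole argument then compares the neighbourhoods of \(u\), \(v\) and \(w\).

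\textbf{Step 1: disjointness.} Because \(uw\in E(G)\) and \(G\) is triangle-free, we have \(N_G(u)\cap N_G(w)=\emptyset\): a vertex \(x\) in both would give the triangle \(uwx\). In the same way, \(vw\in E(G)\) gives \(N_G(v)\cap N_G(w)=\emptyset\). Hence
\[
    N_G(u)\cup N_G(v)\subseteq V(G)\setminus N_G(w),
\]
and the minimum-degree assumption yields \(|N_G(u)\cup N_G(v)|\le n-\delta(G)\le \left(\frac23-\varepsilon\right)n\).

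\textbf{Step 2: inclusion--exclusion.} Using \(|N_G(u)|,|N_G(v)|\ge \left(\frac13+\varepsilon\right)n\), we get
\[
    |N_G(u)\cap N_G(v)|
    = |N_G(u)|+|N_G(v)|-|N_G(u)\cup N_G(v)|
    \ge 2\left(\tfrac13+\varepsilon\right)n-\left(\tfrac23-\varepsilon\right)n
    = 3\varepsilon n,
\]
which is exactly the claim.

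The only step that needs any care is the first one. There we must use maximality to produce the witness \(w\), and that witness is what makes the disjointness argument available. Once \(w\) is in hand, the rest is the routine count above.
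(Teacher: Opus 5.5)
Your proposal is correct and follows the paper's proof essentially verbatim: maximality supplies a common neighbour \(w\) of \(u\) and \(v\), triangle-freeness makes \(N_G(w)\) disjoint from \(N_G(u)\cup N_G(v)\), and inclusion--exclusion together with the minimum-degree bound gives \(|N_G(u)\cap N_G(v)|\ge 3\varepsilon n\). The only difference is bookkeeping: you bound \(|N_G(u)\cup N_G(v)|\le n-\delta(G)\) first and then subtract, while the paper writes \(n\ge d(u)+d(v)-|N_G(u)\cap N_G(v)|+d(w)\) in one line.
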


Assuming Claim~\ref{claim:triangle-large-codegree}, we finish the proof.
Set $\eta:=3\varepsilon$.
Thus every non-edge \(uv\notin E(G)\) satisfies
\begin{align}\label{ineq:hom-triangle-large-codegree}
    |N_G(u)\cap N_G(v)|\ge \eta n.
\end{align}

By Lemma~\ref{lem:vc-reduction-triangle}, there exists \(d=d(\varepsilon)\) such
that $\mathrm{VC}(G)\le d$.
Apply Lemma~\ref{lem:haussler-packing-graph} with
$s:=\eta n/10$.

We obtain a partition
$V(G)=V_1\cup\cdots\cup V_m$
where $m\le e(d+1)\left(20e/\eta\right)^d$,
and for every \(i\in[m]\) and every \(u,v\in V_i\),
\begin{align}\label{ineq:hom-triangle-sym-difference}
    |N_G(u)\triangle N_G(v)|\le 2s.
\end{align}
In particular, \(m\) is bounded in terms of \(\varepsilon\) only.

We next show that this partition is essentially a blowup partition. 
Firstly, for each \(i\in [m]\), \(G[V_i]\) is indeed an independent set since \(G\) is triangle-free and any two vertices in \(V_i\) have at least one common neighbour.
Indeed, if \(u,v\in V_i\), then by~\eqref{ineq:hom-triangle-sym-difference}, we have \(|N_G(u)\triangle N_G(v)|\le \eta n/5\), and hence
$|N_G(u)\cap N_G(v)|\ge d(u)-|N_G(u)\setminus N_G(v)|>0$.
Thus \(u\) and \(v\) have a common neighbour, and so \(uv\notin E(G)\) since \(G\) is triangle-free.

It remains to show that, for any \(i\neq j\) in \([m]\), if there exists \((u,w)\in V_i\times V_j\) such that \(uw\) is a non-edge in \(G\), then the bipartite graph \(G[V_i,V_j]\) is empty.
Otherwise, assume that there exists \((u',w')\in V_i\times V_j\) such that \(u'w'\in E(G)\).
By~\eqref{ineq:hom-triangle-large-codegree}, we know that \(|N_G(u)\cap N_G(w)|\ge \eta n\).
By~\eqref{ineq:hom-triangle-sym-difference}, we know that
\[
    |N_G(u)\triangle N_G(u')|,\ |N_G(w)\triangle N_G(w')|\le \frac{\eta n}{5}.
\]
Therefore,
\[
    |N_G(u')\cap N_G(w')|
    \ge |N_G(u)\cap N_G(w)|-\frac{\eta n}{5}-\frac{\eta n}{5}
    \ge \frac{3\eta n}{5}>0.
\]
Thus \(u'\) and \(w'\) have a common neighbour, which together with the edge \(u'w'\) forms a triangle, a contradiction.
Hence \(G[V_i,V_j]\) is empty whenever it contains a non-edge. Equivalently, for every distinct \(i,j\in[m]\), the bipartite graph \(G[V_i,V_j]\) is either complete or empty.

Let \(F_\varepsilon\) be the quotient graph of this partition: its vertices are the parts \(V_1,\dots,V_m\), and two parts are adjacent in \(F_\varepsilon\) if and only if they are complete to each other in \(G\).
Since each \(V_i\) is independent and every pair of parts is either complete or empty, the natural map sending each vertex of \(G\) to the part containing it is a homomorphism \(G\xrightarrow{\mathrm{hom}}F_\varepsilon\).
Moreover, \(F_\varepsilon\) is triangle-free, because any triangle in \(F_\varepsilon\) would lift to a triangle in \(G\).
Finally, \(|V(F_\varepsilon)|=m\le e(d+1)(20e/\eta)^d\), which is bounded in terms of \(\varepsilon\) only.
This proves the desired upper bound.

It remains to prove Claim~\ref{claim:triangle-large-codegree}. Let \(uv\notin E(G)\).
Since \(G\) is maximal triangle-free, the vertices \(u\) and \(v\) have a common neighbour; choose \(w\in N_G(u)\cap N_G(v)\).
Since \(G\) is triangle-free, \(N_G(w)\) is disjoint from \(N_G(u)\cup N_G(v)\). Hence
\[
    n
    \ge |N_G(u)\cup N_G(v)|+|N_G(w)|
    =
    d(u)+d(v)-|N_G(u)\cap N_G(v)|+d(w).
\]
Rearranging gives \(|N_G(u)\cap N_G(v)|\ge d(u)+d(v)+d(w)-n\).
Using \(\delta(G)\ge (1/3+\varepsilon)n\), we obtain \(|N_G(u)\cap N_G(v)|\ge 3(1/3+\varepsilon)n-n=3\varepsilon n\).
This proves Claim~\ref{claim:triangle-large-codegree}, and hence completes the proof of Theorem~\ref{thm:hom-triangle}.
\end{proof}

\subsubsection{General cliques}

The triangle case extends to all cliques.

\begin{theorem}[Goddard--Lyle]\label{thm:hom-cliques}
For every \(s\ge 3\),
\[
    \delta_{\mathrm{hom}}(K_s)
    =
    \delta_\chi(K_s)
    =
    \frac{2s-5}{2s-3}.
\]
\end{theorem}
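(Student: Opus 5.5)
Since \(\delta_\chi(K_s)\le\delta_{\mathrm{hom}}(K_s)\), the statement splits into a lower bound \(\delta_\chi(K_s)\ge \frac{2s-5}{2s-3}\) and an upper bound \(\delta_{\mathrm{hom}}(K_s)\le\frac{2s-5}{2s-3}\). I would prove both by adapting the triangle case, \Cref{thm:hom-triangle}, with \(s=3\) as the base.

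\textbf{Lower bound.} Take a Hajnal-type triangle-free graph \(G_0\) on roughly \(3m\) vertices with minimum degree close to \(m\) and large chromatic number. Join it completely to a balanced complete \((s-3)\)-partite graph whose parts each have size about \(2m\). The resulting graph is \(K_s\)-free, because a clique uses at most two vertices from \(G_0\) and at most one from each extra part. Its chromatic number is unbounded, since \(G_0\) is a subgraph. The total order is \(n\approx (2s-3)m\). A vertex of \(G_0\) has degree about \(m+2(s-3)m=(2s-5)m\), and a vertex in an extra part has degree about \(3m+2(s-4)m=(2s-5)m\). So \(\delta\approx\frac{2s-5}{2s-3}n\), and pushing \(\delta(G_0)\) towards \(|G_0|/3\) gives the bound.

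\textbf{Upper bound.} Fix \(\varepsilon>0\) and let \(G\) be \(K_s\)-free with \(\delta(G)\ge(\frac{2s-5}{2s-3}+\varepsilon)n\). As in the triangle proof, we may assume \(G\) is maximal \(K_s\)-free. The plan follows the same three stages.
\begin{enumerate}
\item \emph{Codegree claim.} Every non-edge \(uv\) lies in a copy of \(K_s^-\), that is, \(u,v\) have a common \(K_{s-2}\) in their joint neighbourhood. I would upgrade this to a statement that \(N(u)\cap N(v)\) is robustly large: \(|N(u)\cap N(v)|\ge \varepsilon' n\). This would come from an inclusion--exclusion count over the clique, generalising the count \(d(u)+d(v)+d(w)-n\).
\item \emph{VC-dimension bound.} Show that \(\mathrm{VC}(G)\le d(\varepsilon,s)\). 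This is the analogue of Lemma~\ref{lem:vc-reduction-triangle}, and I expect it to be the main obstacle. My first attempt would be to pass to the link. If \(S\) is a large shattered set, choose vertices whose neighbourhoods realise many patterns on \(S\). Iteratively pick a common neighbourhood \(N(x_1)\cap\dots\cap N(x_{s-3})\) of an \((s-3)\)-clique. Inside it the graph is triangle-free, and the degree condition shows the relative minimum degree exceeds \(1/3\) of its size. This is exactly the computation that makes \(\frac{2s-5}{2s-3}\) the right constant, since each common-neighbourhood step loses \(\frac{2}{2s-3}n\). Then appeal to the \L{}uczak--Thomass\'e triangle argument there. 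Maximality must be transferred to the link, which is delicate; if this fails I would fall back on the Goddard--Lyle/Nikiforov reduction, which shows such graphs are joins of a triangle-free dense part with a complete multipartite part.
\item \emph{Blowup and quotient.} Apply Lemma~\ref{lem:haussler-packing-graph} with \(s_0=\varepsilon' n/10\). Exactly as in the triangle proof, each part is independent, since two vertices in the same part have a common neighbour and are nonadjacent by maximality together with the codegree estimate. Likewise, each pair of parts is complete or empty, by perturbing a witnessing \(K_{s-2}\) in the common neighbourhood. The quotient \(F\) is then \(K_s\)-free, because any \(K_s\) in \(F\) lifts to one in \(G\), and \(|F|\) is bounded.
\end{enumerate}

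For the "complete or empty" step, the triangle proof only needed one common neighbour. Here it needs a robust \(K_{s-2}\) in \(N(u')\cap N(w')\), so I would strengthen the claim in stage 1 to count \(\Omega(n^{s-2})\) such cliques via supersaturation.
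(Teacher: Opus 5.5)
\textbf{Lower bound.} The paper gives no proof of this theorem. It only records that Goddard--Lyle deduced the clique case from {\L}uczak's triangle theorem, and that Oberkampf--Schacht and Liu--Shangguan--Skokan--Xu later gave more efficient proofs. Your lower bound (Hajnal's graph joined to a complete $(s-3)$-partite graph with parts of size about $2m$) is the standard construction and is correct.

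\textbf{Upper bound, Stage~2.} This is where the real gap is, and it is where all the difficulty lies. The link argument does not bound $\mathrm{VC}(G)$, for two reasons:
\begin{enumerate}
\item VC-dimension is a property of the global system $\mathcal N(G)$. Nothing in your sketch forces a large shattered set $S$, together with its witnesses $v_T$, to lie inside the common neighbourhood $L$ of a single $(s-3)$-clique.
\item $G[L]$ need not be maximal triangle-free. A non-edge inside $L$ is completed to a $K_s$ by a $K_{s-2}$ that may leave $L$, so \Cref{lem:vc-reduction-triangle} cannot be applied there.
\end{enumerate}
Your fallback is that a maximal $K_s$-free graph above the threshold is a join $K_{n_1,\dots,n_{s-3}}\vee G[U]$ with $G[U]$ triangle-free. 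This is not a routine reduction. It is essentially the Goddard--Lyle structure theorem ($G\xrightarrow{\mathrm{hom}}K_{s-3}\vee H$ with $H$ triangle-free), which is the actual content of the result, so it must be proved or cited as such.

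Once you have it, the VC step is unnecessary:
\begin{enumerate}
\item Adding all edges between distinct $V_i$'s, and from each $V_i$ to $U$, creates no $K_s$, since such a clique would need a triangle in $G[U]$. So by maximality $G$ is a genuine join, and $G[U]$ is maximal triangle-free.
\item Each $V_i$ is independent, so $|V_i|\le n-\delta$. Hence $|U|\ge n-(s-3)(n-\delta)$ and $\delta(G[U])\ge |U|-(n-\delta)>(\tfrac13+\varepsilon')|U|$.
\item The proof of \Cref{thm:hom-triangle} applied to $G[U]$ gives a bounded triangle-free $F_0$ with $G[U]=F_0[\cdot]$. Then $G$ is a blowup of the $K_s$-free graph $K_{s-3}\vee F_0$.
\end{enumerate}

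\textbf{Upper bound, Stage~3.} The independence step does not transfer ``exactly as in the triangle proof'': for $s\ge 4$, having a common neighbour does not make two vertices non-adjacent. Use cliques instead. If $u,v\in V_i$ were adjacent, then $N(u)\cap N(v)$, of size at least $\delta-2s_0$, would have to be $K_{s-2}$-free. But greedily building a clique there succeeds, since every vertex misses at most $n-\delta$ vertices and $(s-2)\delta-(s-3)n>n/(2s-3)$.

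Supersaturation is also not needed for the complete-or-empty step. For a non-edge $uw$ with witnessing clique $W$:
\begin{enumerate}
\item The set $\bigcap_{x\in W}N(x)$ is independent and contains $u$ and $w$, so it is disjoint from $N(u)\cup N(w)$.
\item This gives $|N(u)\cap N(w)|\ge s\delta-(s-2)n$.
\item Greedy then finds a $K_{s-2}$ in $N(u)\cap N(w)$ even after deleting any $4s_0$ vertices, because $s\delta-(s-2)n-(s-3)(n-\delta)=(2s-3)\delta-(2s-5)n\ge(2s-3)\varepsilon n$, provided $s_0$ is a sufficiently small multiple of $\varepsilon n$.
\end{enumerate}
These Stage~3 points are repairable. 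The missing structural input in Stage~2 is the genuine gap.
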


The equality for the chromatic threshold of cliques was proved by
Goddard--Lyle and independently by Nikiforov~\cite{goddard_dense_2011,nikiforov_chromatic_2010}.
The stronger homomorphism-threshold statement is due to Goddard--Lyle:
for every \(\varepsilon>0\), every sufficiently large \(K_s\)-free graph \(G\)
with
\[
    \delta(G)\ge
    \left(\frac{2s-5}{2s-3}+\varepsilon\right)|V(G)|
\]
admits a homomorphism to a bounded \(K_s\)-free graph.

\begin{sloppypar}
Thus cliques exhibit the most optimistic behaviour one could hope for:
the same minimum-degree threshold that forces bounded colourability also forces
a bounded \(K_s\)-free template. The original proof uses Szemer\'edi's
regularity lemma and therefore gives very large templates.
Oberkampf and Schacht later gave a more efficient proof, with better quantitative
bounds on the size of the template~\cite{oberkampf_structure_2020}.
Liu--Shangguan--Skokan--Xu further strengthened this picture by proving a sharp
blowup phenomenon, using the neighbourhood-complexity and packing ideas
illustrated above in the triangle case~\cite{liu_beyond_2024}.
\end{sloppypar}

\subsection{Odd cycles and the first separation}
\label{subsec:hom-odd-cycles}

The clique case might suggest that the homomorphism threshold should coincide
with the chromatic threshold. Odd cycles show that this is false.

For longer odd cycles, the chromatic threshold vanishes. Thomassen~\cite{2007OddCycleChromatic}
proved that for every \(k\ge 3\),
$\delta_\chi(C_{2k-1})=0$.
Equivalently, for every fixed positive minimum-degree proportion, forbidding
one fixed odd cycle \(C_{2k-1}\) already forces bounded chromatic number.

The homomorphism threshold behaves differently. Ebsen--Schacht~\cite{2020COMBHomoOddCycle}
proved the general upper bound
$\delta_{\mathrm{hom}}(C_{2k-1})\le \frac{1}{2k-1}$ for each $k\ge 3$.
Using topological methods, Sankar~\cite{sankar2022homotopy} proved the first
non-trivial lower bound for a single odd cycle.

\begin{theorem}[Ebsen--Schacht~\cite{2020COMBHomoOddCycle}; Sankar~\cite{sankar2022homotopy}]
\label{thm:odd-cycle-hom-bounds}
For every \(k\ge 3\),
\[
    k^{-(8+o(1))k^2}
    \le
    \delta_{\mathrm{hom}}(C_{2k-1})
    \le
    \frac{1}{2k-1}.
\]
In particular, $\delta_{\mathrm{hom}}(C_{2k-1})>0$.
\end{theorem}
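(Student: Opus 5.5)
The statement has two independent halves, and we plan to treat them separately. For the upper bound \(\delta_{\mathrm{hom}}(C_{2k-1})\le \frac{1}{2k-1}\), we follow the template of the proof of \Cref{thm:hom-triangle}. Fix \(\varepsilon>0\) and a \(C_{2k-1}\)-free graph \(G\) with \(\delta(G)\ge(\frac1{2k-1}+\varepsilon)n\). Since \(\delta_\chi(C_{2k-1})=0\) (Thomassen), we already know that \(G\) has bounded chromatic number. However, the target must also avoid \(C_{2k-1}\), and bounded colourability alone does not give this, so a quotient argument is needed.

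\textbf{Upper bound.} First we would pass to a suitably saturated supergraph. The natural choice is a maximal graph that is \(C_{2k-1}\)-free, or better, free of all odd cycles of length at most \(2k-1\). The reason is that dense \(C_{2k-1}\)-free graphs contain no short odd cycles after deleting few edges. The cleanest route we would try is to use the minimum degree to show that \(G\) has odd girth larger than \(2k-1\) up to a bounded modification, and then pass to a maximal graph of odd girth \(>2k-1\). In that graph, any two nonadjacent vertices are joined by an odd walk of length \(\le 2k-3\). The analogue of \Cref{claim:triangle-large-codegree} would then be that the \((k-1)\)-st neighbourhood balls are large. The degree bound \(\frac1{2k-1}\) is exactly what makes \(2k-1\) disjoint neighbourhoods along a shortest odd cycle overflow \(n\). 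Second, we would bound the VC-dimension of a suitable set system, namely the sets of vertices reachable by odd walks of length \(\le 2k-3\), and apply \Cref{lem:haussler-packing-graph} to obtain a bounded partition whose parts have nearly identical such neighbourhoods. Finally, we would show that the quotient graph has no odd cycle of length \(\le 2k-1\): such a cycle would lift, using the large overlaps, to a short odd closed walk in \(G\).

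\textbf{Main obstacle.} We expect the main difficulty to be the VC-dimension bound together with the maximality reduction, since there is no ready-made analogue of \Cref{lem:vc-reduction-triangle} for longer cycles. As a fallback, we would use the regularity lemma in the style of Ebsen--Schacht. We would take a regular partition and define the template on the clusters, joining two clusters when the pair is dense and regular. We would then use the embedding (counting) lemma to show that the reduced graph has no short odd cycles, at the cost of tower-type template sizes.

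\textbf{Lower bound.} For the lower bound, we need, for each fixed template size \(N\), a \(C_{2k-1}\)-free graph \(G\) with \(\delta(G)\ge k^{-(8+o(1))k^2}n\) that admits no homomorphism to any \(C_{2k-1}\)-free graph on \(N\) vertices. Following Sankar, we would use a topological obstruction. We would build \(G\) as a blowup-like graph modelled on a high-dimensional sphere, in the style of Borsuk or generalized Kneser graphs, with vertices being points and edges joining nearly antipodal points. The edge threshold would be chosen so that short odd cycles cannot close up, while the minimum degree remains a positive proportion depending only on \(k\). Any homomorphism to a small target \(F\) would induce a map of the associated simplicial or box complexes. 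A Borsuk--Ulam type argument would then show that the image must contain a short odd cycle, or more precisely that \(F\) has non-trivial homotopy that forces a \(C_{2k-1}\). Making the homotopy argument quantitatively produce exactly \(C_{2k-1}\) in the target, rather than just some odd cycle, is the delicate part. The explicit density \(k^{-(8+o(1))k^2}\) would come from optimizing the cap size on the sphere.
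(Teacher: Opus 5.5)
The paper gives no proof of this statement. It cites Ebsen--Schacht for the upper bound and Sankar for the lower bound, and only sketches Sankar's homotopy idea. Measured against that, both halves of your plan have genuine gaps.

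\textbf{Upper bound.} Your main route passes to a supergraph of odd girth $>2k-1$, so it would yield a bounded target with no odd cycle of length at most $2k-1$. That conclusion is false in general. Adding edges never removes the shorter odd cycles of $G$, and deleting ``few'' edges destroys the exactness of the homomorphism. Moreover, $C_{2k-1}$-free graphs above the threshold can contain triangles. For $k\ge 4$, take a triangle $uvw$ and three disjoint copies of $K_{m,m}$, and join each of $u,v,w$ completely to one side of its own copy. The other blocks are bipartite, so the triangle is the only odd cycle. Yet $\delta(G)=m$ with $n=6m+3$, so $\delta(G)/n\to 1/6>1/(2k-1)$, and every homomorphic image of $G$ contains a triangle. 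There is also a parity slip: in a maximal graph of odd girth $>2k-1$, a non-edge is bridged by a path of even length at most $2k-2$, not by an odd walk of length at most $2k-3$. The argument has to be run on maximal $C_{2k-1}$-free graphs, where shorter odd cycles survive and each non-edge is completed by a path with exactly $2k-2$ edges. The VC analogue you say is missing is \Cref{thm:vc-odd-cycles}, stated for exactly this class. The exactification step is the content of \Cref{thm:blowup-odd-cycles}, which gives $\delta_{\mathrm{hom}}(C_{2k-1})\le\delta_{\mathrm{B}}(C_{2k-1})=1/(2k-1)$. Your regularity fallback does not repair this. Sending a vertex to its cluster and joining dense regular pairs is not a homomorphism, because edges inside clusters, in sparse or irregular pairs, and at the exceptional set are not respected. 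You need a {\L}uczak-type map recording the clusters in which a vertex has large degree. The bound $1/(2k-1)$ must then be used to show that this type graph is $C_{2k-1}$-free. As written, your fallback never uses the threshold, so if it worked it would give $\delta_{\mathrm{hom}}(C_{2k-1})=0$, contradicting the lower bound in the same statement.

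\textbf{Lower bound.} Here the entire content is the step you defer as ``the delicate part'', and a Borsuk--Ulam argument cannot supply it. Such an argument only transports $\mathbb{Z}_2$-index information along $G\to F$, and this index is at most $\chi(G)-2$. Since $\delta_\chi(C_{2k-1})=0$, a $C_{2k-1}$-free family of fixed positive minimum-degree density has chromatic number bounded independently of the template size. In particular, the sphere dimension cannot grow with the template size; if it did, the cap measure, and hence the density, would also tend to $0$. On the other hand, for all $t$ and $k$ there is a fixed finite graph of odd girth $>2k-1$ with index $t$, e.g.\ $KG(2s+t,s)$ for $s$ large. So no index argument can exclude homomorphisms into every bounded $C_{2k-1}$-free graph; a finer obstruction is needed. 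In Sankar's proof, as the paper sketches, this is a discrete homotopy for walks. It is preserved by homomorphisms and related to $\pi_1$ of the Lov\'asz neighbourhood complex. It shows that in a bounded $C_{2k-1}$-free image, the odd closed walks of the Borsuk approximation would have to collapse in a forbidden way. Your proposal gestures at ``non-trivial homotopy'' but neither defines such an invariant nor proves that the collapse is forbidden. The density $k^{-(8+o(1))k^2}$ is also asserted rather than derived.
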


Sankar's proof introduces a topological obstruction to bounded
\(C_{2k-1}\)-free homomorphic images. The construction starts from finite
approximations to suitable Borsuk graphs, which are dense, \(C_{2k-1}\)-free,
and have large chromatic number. The main new ingredient is a graph-theoretic
notion of homotopy for walks. This allows one to control how odd closed walks
can be identified under a homomorphism. Roughly speaking, if such a Borsuk
approximation admitted a homomorphism to a bounded \(C_{2k-1}\)-free graph, then
some short odd-cycle information would have to collapse in a way forbidden by
the discrete homotopy invariants. This contradiction rules out bounded
\(C_{2k-1}\)-free homomorphic images. The framework is closely related to the
fundamental group of the Lov\'asz neighbourhood complex, but the discrete
homotopy language is better suited for tracking odd walks.

Consequently, for every \(k\ge 3\),
$\delta_\chi(C_{2k-1})
    =
    0
    <
    \delta_{\mathrm{hom}}(C_{2k-1})$.
Thus odd cycles give the first examples of single forbidden graphs \(H\) for
which the homomorphism threshold is strictly larger than the chromatic threshold.
Despite this separation, the exact value of
\(\delta_{\mathrm{hom}}(C_{2k-1})\) remains unknown. The current upper and lower
bounds are still far apart, and even the natural possibility that the
Ebsen--Schacht upper bound is sharp is open.

\begin{problem}\label{prob:hom-odd-cycle-exact}
Determine \(\delta_{\mathrm{hom}}(C_{2k-1})\) for \(k\ge 3\). In particular, is
\[
    \delta_{\mathrm{hom}}(C_{2k-1})
    =
    \frac{1}{2k-1}
\]
for every \(k\ge 3\)?
\end{problem}

\subsection{More separations between
\texorpdfstring{$\delta_{\chi}$}{delta-chi} and
\texorpdfstring{$\delta_{\mathrm{hom}}$}{delta-hom}}
\label{subsec:hom-more-separations}

Odd cycles show that the inequality
$\delta_\chi(H)\le \delta_{\mathrm{hom}}(H)$
can be strict for a single forbidden graph. Recent work of Gishboliner and the
authors~\cite{gishboliner_huang_liu_vc_thresholds} shows that this is not an isolated phenomenon. In fact, homomorphism
thresholds have a much richer spectrum than chromatic thresholds.

It is useful to make this spectral viewpoint explicit. For a threshold parameter
\(\delta_*\), where \(*\in\{\chi,\mathrm{hom}\}\), define
\[
    \Delta_*
    :=
    \{\delta_*(H): \chi(H)\ge 3\},
\]
and, for each integer \(r\ge 3\),
\[
    \Delta_*^{(r)}
    :=
    \{\delta_*(H): \chi(H)=r\}.
\]
Thus \(\Delta_*\) is the non-bipartite spectrum of the threshold \(\delta_*\),
and \(\Delta_*^{(r)}\) is its spectrum restricted to \(r\)-chromatic forbidden
graphs.

The corresponding chromatic-threshold spectra are extremely rigid: by the
classification theorem of Allen, B\"ottcher, Griffiths, Kohayakawa and
Morris~\cite{ALLEN2013261}, for every \(r\ge 3\),
\[
    \Delta_{\chi}^{(r)}
    =
    \left\{
        \frac{r-3}{r-2},
        \frac{2r-5}{2r-3},
        \frac{r-2}{r-1}
    \right\}.
\]
The next results show that \(\Delta_{\mathrm{hom}}\) behaves very differently.

We first consider complete tripartite graphs of the form \(K_{1,s,t}\). These
already produce infinitely many new values and many new examples separating
\(\delta_{\mathrm{hom}}\) from \(\delta_\chi\).

\begin{theorem}[Gishboliner--Huang--Liu~\cite{gishboliner_huang_liu_vc_thresholds}]
\label{thm:hom-K1st}
Let \(1\le s\le t\). Then
\[
    \delta_{\mathrm{hom}}(K_{1,s,t})
    \ge
    \max\left\{
        \frac13,\,
        \frac{s}{1+s+t}
    \right\}.
\]
Moreover, if
\[
    t-s\le \frac{\sqrt{s}}2
    \qquad\text{and}\qquad
    3s\ge 2+2t,
\]
then
\[
    \delta_{\mathrm{hom}}(K_{1,s,t})
    =
    \frac{s}{1+s+t}.
\]
\end{theorem}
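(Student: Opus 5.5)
The proof splits into three parts: two lower bounds and a matching upper bound in the stated range.

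\emph{Lower bound \(1/3\).} Since \(K_{1,s,t}\supseteq K_3\), every triangle-free graph is \(K_{1,s,t}\)-free. It therefore suffices to exhibit \(K_{1,s,t}\)-free graphs of minimum degree \((1/3-o(1))n\) that admit no homomorphism to a bounded \(K_{1,s,t}\)-free graph. I would use Hajnal-type constructions: a Kneser graph \(KG(2m+k,m)\) glued to a large complete bipartite part. These are triangle-free, have minimum degree close to \(n/3\), and have unbounded chromatic number. A homomorphism to a bounded \(F\) would colour them with \(|F|\) colours, so \(\delta_{\mathrm{hom}}\ge\delta_\chi(K_3)=1/3\). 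Since \(\chi(K_{1,s,t})=3\) and \(K_{1,s,t}\) is near-acyclic in the sense of Allen et al., one can equally just invoke \(\delta_\chi(K_{1,s,t})\ge 1/3\) together with \(\delta_\chi\le\delta_{\mathrm{hom}}\). I would check this carefully against the ABGKM classification: since \(K_{1,s,t}\) contains a triangle through the apex, its chromatic threshold is at least \(1/3\) for \(r=3\), because \((r-3)/(r-2)=0\) occurs only in the near-acyclic case.

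\emph{Lower bound \(s/(1+s+t)\).} Here I would build a blowup-type obstruction. Take a balanced complete tripartite-like structure on parts of sizes proportional to \(1,s,t\), which contains a fixed copy pattern of \(K_{1,s,t}\), and delete a sparse-but-dense gadget (for instance a high-girth or Borsuk/Kneser-type bipartite pattern) so that the resulting \(G\) is \(K_{1,s,t}\)-free with \(\delta(G)\approx \frac{s}{1+s+t}n\). The construction should also have the property that any homomorphism to a bounded graph \(F\) must identify vertices whose common images create \(K_{1,s,t}\) in \(F\).

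The key mechanism is that in a bounded homomorphic image, vertices with the same image have identical adjacency roles. One then picks, by pigeonhole, many vertices of \(G\) mapped to the same vertex of \(F\). Their neighbourhoods in \(G\) need not be \(K_{s,t}\)-containing, but their union is. That union gives a \(K_{1,s,t}\) in \(F\): the images of \(s\) vertices from one side and \(t\) from the other become fully adjacent to the common image. Concretely, I would take \(G\) to consist of a vertex class \(A\) whose vertices each see only a structured small part of \(B\cup C\) in a Kneser-like fashion. This forces \(F\) to contain \(K_{1,s,t}\) once \(A\) is collapsed to boundedly many classes. The degree count \(|B|/n=s/(1+s+t)\) comes from the vertices in \(C\).

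\emph{Upper bound in the range.} For the matching upper bound I would mimic the triangle proof above. First pass to a maximal \(K_{1,s,t}\)-free supergraph, which preserves the minimum degree and preserves homomorphisms. Next show that \(\delta\ge(\frac{s}{1+s+t}+\varepsilon)n\) forces bounded VC-dimension; the conditions \(t-s\le\sqrt s/2\) and \(3s\ge 2+2t\) should be exactly what is needed for this and for the following step. Then prove a robust codegree claim: every non-edge \(uv\) has its common neighbourhood containing \(\Omega(\varepsilon n)\) vertices spanning a \(K_{s-1,t}\)- or \(K_{s,t-1}\)-type configuration. After that, apply Lemma~\ref{lem:haussler-packing-graph} with \(s=\Theta(\varepsilon n)\) to get a bounded partition into near-twin classes. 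Finally, take the quotient on the clean pairs and show that the quotient is \(K_{1,s,t}\)-free; the argument is that a copy in the quotient lifts to \(G\) because the classes are large and nearly twin, so edges transfer.

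The main obstacle is the VC-dimension reduction together with the lifting step. Unlike the triangle case, classes need not be independent and pairs need not be complete or empty. So I would first try to show that every pair of classes is either almost complete or almost empty, and then define \(F\) on the near-complete pairs. I expect the numerical conditions on \(s,t\) to enter precisely in ruling out the mixed pairs via a double-counting of \(K_{s,t}\)'s in common neighbourhoods.
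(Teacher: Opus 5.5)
Your $1/3$ bound is fine: $\delta_{\mathrm{hom}}(K_{1,s,t})\ge\delta_\chi(K_{1,s,t})\ge\delta_\chi(K_3)=1/3$ by monotonicity of $\delta_\chi$. Drop the remark that $K_{1,s,t}$ is near-acyclic, though: it contains a triangle, so it is \emph{not}.

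\textbf{The upper bound.} Your plan breaks at its second step. The survey only cites this theorem, but its own \Cref{thm:vc-complete-tripartite} shows the break cannot be repaired.

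The hypotheses force $s\ge 2$. Once $s\ge 3$, the condition $3s\ge 2+2t$ gives $t\le (3s-2)/2<2s-2$. So for every admissible pair except $(s,t)=(2,2)$ one has $\delta_{\mathrm{VC}}(K_{1,s,t})=1/2$, whereas $s/(1+s+t)\le s/(2s+1)<1/2$. For example, for $K_{1,3,3}$ the claimed threshold is $3/7$ but $\delta_{\mathrm{VC}}=1/2$.

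Hence for every $\alpha\in(s/(1+s+t),1/2)$ there are maximal $K_{1,s,t}$-free graphs with $\delta(G)\ge\alpha n$ and arbitrarily large VC-dimension. If such a $G$ is the input, its maximal completion is $G$ itself. So your claim that the minimum degree forces bounded VC-dimension is false.

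The problem is deeper than this one step. Any argument ending with independent classes and complete-or-empty pairs exhibits $G$ as a blowup of a bounded graph. Your pipeline would therefore prove $\delta_{\mathrm{B}}(K_{1,s,t})\le s/(1+s+t)$, contradicting $\delta_{\mathrm{B}}\ge\delta_{\mathrm{VC}}=1/2$. Your fallback does not help either:
\begin{itemize}
\item almost complete or almost empty pairs do not yield an exact homomorphism;
\item near-twin classes need not even be independent, since $K_{1,s,t}$-free graphs may contain triangles.
\end{itemize}

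For cliques, maximality upgrades a homomorphism to a blowup. The reason is that the copy of $K_r$ created by adding a missing edge between two fibres maps onto a copy of $K_r$ in $F$. For $K_{1,s,t}$ the image of the created copy can degenerate, e.g.\ to a triangle. So the upper bound needs a homomorphism that is not a homogeneous quotient of $G$, obtained by an argument not driven by neighbourhood complexity. Note that $3s\ge 2+2t$ is exactly the condition $s/(1+s+t)\ge 2/5$, which suggests density-type rather than VC-type input.

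\textbf{The lower bound $s/(1+s+t)$.} This is the substantive part of the first assertion, and it is not established. No graph is actually specified, and the degree bookkeeping is inconsistent: vertices of $A$ that see only a small part of $B\cup C$ cannot have degree about $sn/(1+s+t)$.

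More importantly, the pigeonhole mechanism is wrong. Suppose many vertices share an image $x$ and the union of their neighbourhoods contains a $K_{s,t}$. Nothing forces the $s+t$ vertices of that $K_{s,t}$ to have distinct images. Vertices on the same side are non-adjacent, so a homomorphism may send the whole $K_{s,t}$ onto a single edge of $F$. That produces only a triangle through $x$, which a $K_{1,s,t}$-free target is allowed to contain.

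What is needed is an explicit $K_{1,s,t}$-free graph of minimum degree $(s/(1+s+t)-o(1))n$, together with a proof that for \emph{every} bounded target $F$ such collapses are impossible, so that some $N_F(x)$ is forced to contain $K_{s,t}$. This rigidity argument is the heart of the lower bound and is absent from your proposal.
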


Since \(K_{1,s,t}\) is 3-chromatic and $\delta_\chi(K_{1,s,t})=1/3$, 
Theorem~\ref{thm:hom-K1st} gives many explicit separations between chromatic and
homomorphism thresholds. For example, when \(t\) is close to \(s\), the value
\(s/(1+s+t)\) is strictly larger than \(1/3\). Thus the homomorphism threshold
can distinguish between complete tripartite forbidden graphs which have the
same chromatic threshold.

The next result shows that this extra complexity is not confined to the
3-chromatic case.

\begin{theorem}[Gishboliner--Huang--Liu~\cite{gishboliner_huang_liu_vc_thresholds}]
\label{thm:hom-accumulation}
For every integer \(r\ge 3\), the set \(\Delta_{\mathrm{hom}}^{(r)}\) has an
accumulation point. More precisely,
$\frac{r-2}{r-1}$
is an accumulation point of \(\Delta_{\mathrm{hom}}^{(r)}\).
\end{theorem}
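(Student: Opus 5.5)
The plan is to exhibit, for each fixed \(r\ge 3\), an infinite family of \(r\)-chromatic graphs \(H_t\) with \(\delta_{\mathrm{hom}}(H_t)\) pairwise distinct, strictly below \((r-2)/(r-1)\), and tending to \((r-2)/(r-1)\). The natural family comes from Theorem~\ref{thm:hom-K1st} lifted by adding universal vertices. Let \(H_{s,t}^{(r)}:=K_{r-3}+K_{1,s,t}\), the join of a clique on \(r-3\) vertices with \(K_{1,s,t}\). This graph is \(r\)-chromatic. Since \(\Delta_{\mathrm{hom}}^{(r)}\subseteq[0,1]\), the problem splits into two estimates. I would compute these thresholds, or at least sandwich them, in terms of the values \(s/(1+s+t)\) from Theorem~\ref{thm:hom-K1st}.

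\textbf{Step 1 (upper bound).} I would show \(\delta_{\mathrm{hom}}(H_t)<\frac{r-2}{r-1}\) for every member of the family, which is a standard step. For \(\alpha>\frac{r-2}{r-1}\), minimum degree \(\alpha n\) together with \(\chi(H_t)=r\) and Erd\H{o}s--Stone supersaturation forces many copies of \(K_r\). After that, an Andr\'asfai--Erd\H{o}s--S\'os-type stability argument for \(H_t\) (near \(\frac{r-2}{r-1}\) an \(H_t\)-free graph is \((r-1)\)-partite up to a few edits) gives a homomorphism to a bounded \(H_t\)-free graph, for instance a blowup of \(K_{r-1}\). In fact I expect the known upper bounds to give \(\delta_{\mathrm{hom}}(H_t)\le\frac{r-2}{r-1}\) for all \(r\)-chromatic \(H\) with a forest in the colour-critical decomposition. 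The needed statement is the weaker one: \(\frac{r-2}{r-1}\) is an upper bound for the whole spectrum, and the values approach it from below. Strictness below it is needed only for infinitely many members, which are automatically distinct if they converge.

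\textbf{Step 2 (lower bound construction).} This is the main step. I would take the extremal construction behind \(\delta_{\mathrm{hom}}(K_{1,s,t})\ge s/(1+s+t)\) with \(t=s\), that is, a \(K_{1,s,s}\)-free graph \(G_0\) with \(\delta(G_0)\ge(\beta-o(1))|G_0|\), \(\beta=s/(1+2s)\), admitting no homomorphism to a bounded \(K_{1,s,s}\)-free graph. I would then join it with a balanced complete \((r-3)\)-partite graph, choosing part sizes to equalise degrees. The result is \(H^{(r)}_{s,s}\)-free: any copy of \(K_{r-3}+K_{1,s,s}\) must place at most one universal vertex per part and the rest inside \(G_0\). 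The last claim needs care, because vertices of \(K_{1,s,s}\) could also spill into the partite parts. I would handle this by taking \(G_0\) to be \(K_{1,s,s}\)-free in a robust sense, or by replacing the parts by copies of a suitably dense \(K_{1,s,s}\)-free structure. Its minimum-degree ratio is computed from \(\beta\) by the usual join formula: if \(G_0\) has proportion \(x\) and each part has proportion \(y\) with \((r-3)y+x=1\), balancing gives a threshold of the form \(\frac{(r-3)+\beta'}{(r-2)+\beta'}\)-type expressions, where \(\beta'\) is the relative density. As \(s\to\infty\), \(\beta\to 1/2\), and this tends to \(\frac{r-2}{r-1}\). Any homomorphism of the joined graph to a bounded \(H\)-free \(F\) restricts to a homomorphism of \(G_0\) into the common neighbourhood of the images of the clique parts. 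That common neighbourhood is \(K_{1,s,s}\)-free, which contradicts the choice of \(G_0\).

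\textbf{Step 3 (distinctness).} To conclude there is an accumulation point rather than a constant sequence, I need infinitely many distinct values below \(\frac{r-2}{r-1}\). If Step~1 only gives \(\le\), I would use the exact value from Theorem~\ref{thm:hom-K1st} in the regime \(t-s\le\sqrt s/2\), \(3s\ge 2+2t\). The plan is to prove a matching upper bound for the joined graphs by reducing, inside common neighbourhoods of \((r-3)\)-cliques, to the \(K_{1,s,t}\) upper bound. This yields an exact formula, strictly increasing in \(s\) and strictly less than \(\frac{r-2}{r-1}\). I expect this matching upper bound, the join reduction of the homomorphism threshold, to be the main obstacle. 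A fallback is to establish only that the lower bounds are strictly increasing and the upper bounds strictly below the limit along a subsequence, which suffices to extract infinitely many distinct values converging to \(\frac{r-2}{r-1}\).
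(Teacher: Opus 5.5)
Your proposal has two genuine gaps. Together they mean it proves the statement only for $r=3$. In that case the statement is immediate from Theorem~\ref{thm:hom-K1st} with $t=s\ge 2$: the values $s/(2s+1)$ are pairwise distinct and increase to $1/2$.

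\textbf{The lower-bound host is not $H$-free for $r\ge 4$.} Your $H=K_{r-3}+K_{1,s,s}$ is complete $r$-partite, with $r-2$ singleton classes and two classes of size $s$. Each part of $T$ is a linear-size independent set, so it can host an entire colour class of $H$, not just one apex vertex. Since $T$ can absorb any $r-3$ classes, $G_0\vee T\supseteq H$ as soon as $G_0$ contains $K_{1,1,s}$ (when $r=4$) or a triangle (when $r\ge 5$).

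For $r\ge5$ you therefore need $G_0$ triangle-free, and then the construction collapses. If $\beta>1/3$, Theorem~\ref{thm:hom-triangle} gives a homomorphism $G_0\to F_0$ with $F_0$ bounded and triangle-free. Sending the $i$-th part of $T$ to a new apex $x_i$ gives $G_0\vee T\to F_0\vee K_{r-3}$. This target is $H$-free: at most $r-3$ vertices of a copy of $H$ can sit on the apexes, and $H$ minus any $r-3$ vertices still contains a triangle. So this scheme cannot exceed $\frac{2r-5}{2r-3}$.

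For $r=4$ you would need $G_0$ to be $K_{1,1,s}$-free, with density tending to $1/2$ and with no bounded $K_{1,s,s}$-free image. Theorem~\ref{thm:hom-K1st} supplies nothing of the kind. Neither suggested repair helps. Robust $K_{1,s,s}$-freeness is beside the point, and densifying the parts of $T$ only creates more copies of $H$. You could instead enlarge the extra classes of $H$, so that absorbed classes leave a $K_{1,s,s}$ behind. But then your restriction argument breaks. A target may collapse each part of $T$ to a single vertex, and then the common neighbourhood of that $(r-3)$-clique need only avoid $H$ minus $r-3$ vertices, not $K_{1,s,s}$. This is where a genuinely new idea is needed.

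\textbf{No strict upper bound.} Even with a correct lower-bound family, nothing in the proposal supplies the strict upper bounds that an accumulation point requires. Step 1 only yields $\delta_{\mathrm{hom}}(H)\le\frac{r-2}{r-1}$, and that is trivial by Erd\H{o}s--Stone: for large $n$ there are no $H$-free graphs with $\delta(G)\ge(\frac{r-2}{r-1}+\varepsilon)n$. So the stability discussion plays no role.

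The strict inequality you call a standard step is not standard. It requires exhibiting a bounded $H$-free template that works below $\frac{r-2}{r-1}$. Lower bounds tending to $\frac{r-2}{r-1}$ are fully consistent with $\delta_{\mathrm{hom}}(H_s)=\frac{r-2}{r-1}$ for every $s$, in which case the family produces no accumulation point. Your fallback still presupposes such strict upper bounds. The ``join reduction'' you flag as the main obstacle is exactly the missing theorem. It cannot be taken for granted: $\delta_{\mathrm{hom}}$ is not monotone, and the reduction would need structural control of all $H$-free graphs in this density range, not just of joins.
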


A crucial simplification in the classification of chromatic thresholds by
Allen--B\"ottcher--Griffiths--Kohayakawa--Morris~\cite{ALLEN2013261} is
monotonicity. Namely, if \(H_1\subseteq H_2\), then every \(H_1\)-free graph is
\(H_2\)-free, and hence
\[
    \delta_\chi(H_1)\le \delta_\chi(H_2).
\]
This monotonicity allows one to reduce the classification to suitable minimal or maximal
forbidden configurations. The complete tripartite examples above show that this
basic principle fails for the homomorphism threshold.

\begin{cor}\label{cor:hom-nonmonotone}
The homomorphism threshold is not monotone under taking subgraphs or induced
subgraphs.
\end{cor}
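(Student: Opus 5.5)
We exhibit explicit pairs \(H_1\subseteq H_2\) with \(\delta_{\mathrm{hom}}(H_1)>\delta_{\mathrm{hom}}(H_2)\), using only Theorem~\ref{thm:hom-K1st} together with an easy upper bound for the larger graph. The natural choice is \(H_1=K_{1,s,s}\) and \(H_2=K_{1,s,t}\) with \(t>s\). Then \(H_1\) is an induced subgraph of \(H_2\): delete \(t-s\) vertices from the largest part. This handles subgraphs and induced subgraphs at once.

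\textbf{Lower bound for \(H_1\).} Theorem~\ref{thm:hom-K1st} with \(t=s\) applies once \(3s\ge 2+2s\), i.e.\ \(s\ge 2\). Since \(t-s=0\le\sqrt{s}/2\) holds trivially, it gives
\[
\delta_{\mathrm{hom}}(K_{1,s,s})=\frac{s}{1+2s}>\frac13 .
\]

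\textbf{Upper bound for \(H_2\).} We want to show that \(\delta_{\mathrm{hom}}(K_{1,s,t})\) is strictly below \(s/(1+2s)\) for a suitable \(t>s\). One route is to invoke the exact formula of Theorem~\ref{thm:hom-K1st} for a nearby \(t\): if \(s< t\le s+\sqrt{s}/2\) and \(3s\ge 2+2t\), then
\[
\delta_{\mathrm{hom}}(K_{1,s,t})=\frac{s}{1+s+t}<\frac{s}{1+2s}.
\]
For example, \(s=16\) and \(t=17\) satisfy \(t-s=1\le 2\) and \(48\ge 36\), giving
\[
\delta_{\mathrm{hom}}(K_{1,16,17})=\frac{16}{34}<\frac{16}{33}=\delta_{\mathrm{hom}}(K_{1,16,16}),
\]
while \(K_{1,16,16}\) is an induced subgraph of \(K_{1,16,17}\). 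This already proves the corollary.

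\textbf{Main check.} The only real obstacle is verifying that the hypotheses of Theorem~\ref{thm:hom-K1st} hold simultaneously for both graphs; the arithmetic above confirms this. Should one prefer a more robust family, a fallback is to take \(t\) large, where the lower bound is only \(1/3\). In that case one would need a separate upper bound \(\delta_{\mathrm{hom}}(K_{1,s,t})\le 1/3\) for large \(t\), which is not available from the stated results. The exact-value regime avoids this issue, so we use it.
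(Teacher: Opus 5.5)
Your proof is correct and is essentially the paper's argument. The paper also compares $K_{1,s,s}\subseteq K_{1,s,s+1}$ (as an induced subgraph) via the exact formula of Theorem~\ref{thm:hom-K1st}, but for every $s\ge 4$, which is exactly where both hypotheses hold for $K_{1,s,s+1}$; your choice $s=16$ is one instance of this.
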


\begin{proof}
Let \(s\ge 4\). Applying Theorem~\ref{thm:hom-K1st} to \(K_{1,s,s}\) and
\(K_{1,s,s+1}\), we obtain
\[
    \delta_{\mathrm{hom}}(K_{1,s,s})
    =
    \frac{s}{2s+1}
    >
    \frac{s}{2s+2}
    =
    \delta_{\mathrm{hom}}(K_{1,s,s+1}).
\]
However, \(K_{1,s,s}\) is an induced subgraph of \(K_{1,s,s+1}\). Thus passing
to a supergraph of the forbidden graph can decrease the homomorphism threshold.
Hence \(\delta_{\mathrm{hom}}\) is not monotone under taking induced subgraphs,
and therefore is not monotone under taking subgraphs either.
\end{proof}

These results show that the homomorphism threshold is more delicate than the
chromatic threshold in several ways. It has new exact values, fixed-chromatic
spectra with accumulation points, and graph-function behaviour which is absent
from the chromatic-threshold theory. Thus determining homomorphism thresholds
appears to require genuinely finer information about the forbidden graph and
about bounded \(H\)-free templates.

\subsection{Homomorphism thresholds for families}
\label{subsec:hom-families}

The definition of the homomorphism threshold extends naturally from a single
forbidden graph to a family of forbidden graphs. If \(\mathcal F\) is a family
of graphs, we say that a graph is \(\mathcal F\)-free if it contains no member
of \(\mathcal F\) as a subgraph. Define
\begin{align*}
    \delta_{\mathrm{hom}}(\mathcal F)
    :=
    \inf\Bigl\{
    &\alpha\ge 0:
    \exists\, \mathcal F\text{-free }\Gamma=\Gamma(\alpha,\mathcal F)\ \text{s.t. } \forall\,
    n\text{-vertex }\mathcal F\text{-free graph }G,\ \\
    &\delta(G)\ge \alpha n \Rightarrow G\xrightarrow{\mathrm{hom}}\Gamma
    \Bigr\}.
\end{align*}
Thus the usual definition is recovered when \(\mathcal F=\{H\}\).

The most important examples are the families of short odd cycles. Let
\[
    \mathcal C_{2\ell-1}:=\{C_3,C_5,\dots,C_{2\ell-1}\}.
\]
Equivalently, an \(\mathcal C_{2\ell-1}\)-free graph has odd girth at least
\(2\ell+1\).

The first nontrivial case was determined by Letzter and Snyder
\cite{letzter_homomorphism_2019}, who proved
\[
    \delta_{\mathrm{hom}}(\mathcal C_5)
    =
    \delta_{\mathrm{hom}}(\{C_3,C_5\})
    =
    \frac15.
\]
Ebsen and Schacht~\cite{2020COMBHomoOddCycle} extended this to all families of short odd cycles.

\begin{theorem}[Letzter--Snyder~\cite{letzter_homomorphism_2019}; Ebsen--Schacht~\cite{2020COMBHomoOddCycle}]
\label{thm:hom-short-odd-cycles}
For every \(\ell\ge 2\),
\[
    \delta_{\mathrm{hom}}(\mathcal C_{2\ell-1})
    =
    \frac{1}{2\ell-1}.
\]
\end{theorem}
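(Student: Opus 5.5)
The statement has two directions: a lower-bound construction showing $\delta_{\mathrm{hom}}(\mathcal C_{2\ell-1})\ge 1/(2\ell-1)$, and an upper bound obtained by imitating the VC-dimension/packing proof of \Cref{thm:hom-triangle}.

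\textbf{Lower bound.} For every $\varepsilon>0$ we need $\mathcal C_{2\ell-1}$-free graphs with $\delta(G)\ge(1/(2\ell-1)-\varepsilon)n$ that admit no homomorphism into any bounded $\mathcal C_{2\ell-1}$-free graph. We would take a Borsuk-type graph: points of a high-dimensional sphere, joined when nearly antipodal. This graph has odd girth larger than $2\ell-1$ and has huge chromatic number. We then blow it up and glue it to a $C_{2\ell+1}$-like template, balancing the part sizes so that every vertex has degree close to $n/(2\ell-1)$; this is the analogue of Hajnal's construction for triangles.

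The non-homomorphism argument would run as follows.
\begin{itemize}
\item If $G\to\Gamma$ with $|\Gamma|$ bounded, then by pigeonhole two vertices of the Borsuk part that are far apart on the sphere are mapped to the same vertex.
\item The gluing is arranged so that such vertices are joined by an odd walk of length at most $2\ell-1$ through the template.
\item Under the homomorphism, this walk becomes a short odd closed walk in $\Gamma$, and hence $\Gamma$ contains an odd cycle of length at most $2\ell-1$.
\end{itemize}
The gluing must be tuned so that the identified pairs genuinely close such short odd walks. This is the delicate design step of the lower bound.

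\textbf{Upper bound.} Fix $\varepsilon>0$ and let $G$ be $\mathcal C_{2\ell-1}$-free with $\delta(G)\ge(1/(2\ell-1)+\varepsilon)n$. Adding edges while preserving odd girth, we may assume $G$ is maximal. We plan to work with the system of distance-$(\ell-1)$ balls, or more precisely the sets $N^{\mathrm{odd}}_{\ell-1}(v)$ of vertices reachable from $v$ by walks of length exactly $\ell-1$, in place of plain neighbourhoods. The proof then has three steps.
\begin{enumerate}
\item Show that this set system has VC-dimension bounded in terms of $\varepsilon$. The high minimum degree makes these balls large, and the odd-girth condition makes the balls around an edge disjoint in a structured way. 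Together these should exclude large shattered sets, in analogy with Lemma~\ref{lem:vc-reduction-triangle}.
\item Apply Lemma~\ref{lem:haussler-packing-graph} (in its general set-system form) to obtain a bounded partition $V_1,\dots,V_m$ such that vertices in the same part have nearly equal balls.
\item Define $\Gamma$ on $[m]$ by putting $ij\in E(\Gamma)$ whenever $G$ has an edge between $V_i$ and $V_j$. Then $G\to\Gamma$ holds automatically. It remains to prove that $\Gamma$ has no odd cycle of length at most $2\ell-1$. Since parts may contain edges, $\Gamma$ can have loops, so we must also show each $V_i$ is independent; this is the loop case of the same argument.
\end{enumerate}

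The core of step~3 is a lifting argument, analogous to Claim~\ref{claim:triangle-large-codegree}. Suppose $\Gamma$ contains a short odd cycle. We lift it to an odd walk in $G$ whose consecutive edges pass through different vertices of the same part. Since vertices in a part have nearly equal large balls, each such gap can be bridged by paths of controlled length and parity. The degree bound $1/(2\ell-1)+\varepsilon$ should give enough room, via a counting of disjoint balls along a shortest odd cycle (as in the Andr\'asfai--Erd\H{o}s--S\'os type argument), to close the lifted walk into an odd closed walk of length at most $2\ell-1$ in $G$, which is a contradiction.

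\textbf{Main obstacle.} We expect the main difficulty to be the bounded-VC-dimension statement in step~1 for general $\ell$ together with the parity-controlled lifting in step~3. If the VC bound resists, we would first try an iterative alternative: use the odd-girth structure to first map $G$ onto a bounded blowup of $C_{2\ell+1}$ up to an exceptional set, and then absorb the exceptional vertices using the degree bound.
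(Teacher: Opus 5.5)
\textbf{Lower bound.} This direction has a genuine gap, and for $\ell\ge 3$ the mechanism you describe cannot work. (For $\ell=2$ the bound already follows from $\delta_\chi(K_3)=1/3$.) For $\ell\ge3$ every $\mathcal C_{2\ell-1}$-free graph is $C_5$-free. By Thomassen's theorem $\delta_\chi(C_5)=0$, a $\mathcal C_{2\ell-1}$-free graph with $\delta(G)\ge\alpha n$ therefore has chromatic number bounded in terms of $\alpha$. So no graph in your family can contain a Borsuk graph of huge chromatic number, and the Hajnal-type gluing is impossible, not merely delicate. Moreover, in a Borsuk graph with tolerance $\varepsilon$, an odd walk of length $t$ from $x$ ends within $t\varepsilon$ of $-x$. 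The Borsuk part therefore never turns a collision of two far-apart vertices into a short odd walk; the step you defer to the gluing is the whole argument.

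The obstruction the paper alludes to (generalized Andr\'asfai graphs) is injectivity, not chromatic number. It is realised by a fat one-dimensional Borsuk graph with no gluing at all. Let $N=(2\ell-1)(m-1)+2$ and $q=(\ell-1)(m-1)+1$, and let $A_m$ be the Cayley graph on $\mathbb Z_N$ with connection set $\{q,q+1,\dots,N-q\}$.
\begin{itemize}
\item $A_m$ is $m$-regular, with $m/N>1/(2\ell-1)$ and $m/N\to 1/(2\ell-1)$.
\item Since $(\ell-1)N=(2\ell-1)q-1$, every sum of $2j+1\le 2\ell-1$ connection elements lies strictly between $jN$ and $(j+1)N$. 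Hence $A_m$ is $\mathcal C_{2\ell-1}$-free.
\item The $(2\ell-1)$-fold sums fill the integer interval $[(2\ell-1)q,(2\ell-1)(N-q)]$. This consists of $N-1$ consecutive integers starting at $(2\ell-1)q\equiv 1 \pmod N$. So any two distinct vertices are joined by a walk of length exactly $2\ell-1$.
\end{itemize}
Consequently every homomorphism from $A_m$ to a $\mathcal C_{2\ell-1}$-free graph is injective, and no fixed bounded target works for all $m$.

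\textbf{Upper bound.} The paper does not prove this direction; it cites Letzter--Snyder and Ebsen--Schacht. Your VC/packing plan is a reasonable template modelled on the proof of Theorem~\ref{thm:hom-triangle}, but both load-bearing steps are only asserted.
\begin{itemize}
\item Step~1, bounded VC-dimension of $\{N_{\ell-1}(v)\}$ for maximal graphs above $1/(2\ell-1)$, comes with no argument.
\item In step~3, bridging $y_t$ to $x_t$ inside a part by a parity-preserving walk adds at least $2$ for each bridged part. For a quotient cycle of length $2\ell-1$ the lifted closed walk is then too long.
\end{itemize}
The triangle proof avoids the second problem by proving homogeneity: each $V_i$ is independent and each $G[V_i,V_j]$ is complete or empty, so a short odd cycle of the quotient lifts verbatim. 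Here that needs a robust analogue of Claim~\ref{claim:triangle-large-codegree}, for instance $|N_{\ell-1}(u)\cap N_{\ell-1}(v)|\ge\eta n$ for every non-edge $uv$, where $N_{\ell-1}(v)$ is the set of endpoints of walks of length exactly $\ell-1$ from $v$. Maximality alone gives only one even walk of length at most $2\ell-2$ between $u$ and $v$. Such a walk does not survive replacing $u,v$ by other vertices of their parts. Proving this robust statement (together with step~1) from $\delta(G)\ge(1/(2\ell-1)+\varepsilon)n$ is the substance of the upper bound, and the proposal does not supply it.
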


The lower-bound examples are generalized Andr\'asfai-type graphs: they have odd
girth at least \(2\ell+1\), minimum-degree ratio asymptotic to
\(1/(2\ell-1)\), and the property that any sufficiently small homomorphic image
necessarily creates a forbidden shorter odd cycle. The upper bound shows that
above this density, every graph of odd girth at least \(2\ell+1\) admits a
homomorphism to a bounded graph of the same odd-girth type.

This theorem should be compared with the single-cycle problem. For the family
\(\mathcal C_{2\ell-1}\), forbidding all shorter odd cycles imposes a genuine
odd-girth condition, and the homomorphism threshold is known exactly. For the
single forbidden graph \(C_{2\ell-1}\), shorter odd cycles are still allowed;
this extra freedom is one reason why the exact value of
\(\delta_{\mathrm{hom}}(C_{2\ell-1})\) remains open.

\subsection{Asymmetric homomorphism thresholds}
\label{subsubsec:asymmetric-hom-thresholds}

There is also a natural asymmetric version of the homomorphism-threshold
problem. 
Given two families of graphs \(\mathcal F_1\) and \(\mathcal F_2\), call a real
number \(\alpha\ge 0\) \emph{admissible} if there exists an
\(\mathcal F_2\)-free graph \(\Gamma=\Gamma(\alpha,\mathcal F_1,\mathcal F_2)\) such that
every \(n\)-vertex \(\mathcal F_1\)-free graph \(G\) with
\(\delta(G)\ge \alpha n\) satisfies \(G\xrightarrow{\mathrm{hom}}\Gamma\). Define
\[
    \delta_{\mathrm{hom}}(\mathcal F_1;\mathcal F_2)
    :=
    \inf\{\alpha\ge 0:\alpha \text{ is admissible}\}.
\]
Thus the usual homomorphism threshold is the diagonal case
\[
    \delta_{\mathrm{hom}}(\mathcal F)
    =
    \delta_{\mathrm{hom}}(\mathcal F;\mathcal F).
\]

This definition separates the assumption placed on the source graph from the
structure required of the target. For families of short odd cycles, this leads
to a striking zero-threshold phenomenon. Recall that
\[
    \mathcal C_{2t+1}:=\{C_3,C_5,\dots,C_{2t+1}\}.
\]

\begin{theorem}[Gishboliner--Hurley--Wigderson~\cite{gishboliner_hurley_wigderson_asymmetric_2025}]
\label{thm:asymmetric-hom-odd-cycles}
For every \(t\ge 1\),
\[
    \delta_{\mathrm{hom}}(\mathcal C_{2t+5};\mathcal C_{2t+1})=0.
\]
\end{theorem}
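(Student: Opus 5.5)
The plan is to fix \(\alpha>0\) and produce, for every \(\mathcal C_{2t+5}\)-free graph \(G\) with \(\delta(G)\ge\alpha n\), a homomorphism into a graph \(\Gamma\) with \(|V(\Gamma)|=O_{\alpha,t}(1)\) and odd girth at least \(2t+3\). We may assume \(G\) is connected; otherwise we treat each component separately and take the disjoint union of the targets, and there are at most \(1/\alpha\) components. The source graph has odd girth at least \(2t+7\) and the target only needs odd girth at least \(2t+3\). So we may lose an additive \(4\) in odd-cycle lengths, and no more, when passing from \(\Gamma\) back to \(G\). The whole construction is organised around this budget of \(4\).

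\emph{Step 1: a bounded anchor set.} Choose a maximal family of vertices \(x_1,\dots,x_m\) whose neighbourhoods are pairwise disjoint. Since each neighbourhood has size at least \(\alpha n\), we get \(m\le 1/\alpha\). By maximality, every vertex \(v\) satisfies \(N(v)\cap N(x_i)\neq\emptyset\) for some \(i\), so \(v\) has a common neighbour with an anchor. We will refine this by iterating, in the spirit of Thomassen's bounded-colouring argument and of the packing partition used in the proof of Theorem~\ref{thm:hom-triangle}. Inside the second neighbourhood of each anchor, we again pick boundedly many sub-anchors with disjoint neighbourhoods. The aim is a partition \(V(G)=V_1\cup\dots\cup V_M\), with \(M\) bounded in terms of \(\alpha,t\), such that every part \(V_j\) comes with a designated vertex \(c_j\) that is a common neighbour of all of \(V_j\).

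\emph{Step 2: the target.} First replace each part by its intersections with the distance layers from \(c_j\), truncated at distance \(t+3\). Let \(\Gamma\) be the quotient graph on the resulting refined parts, with two parts adjacent whenever some edge of \(G\) joins them. The quotient map is then a homomorphism and \(|V(\Gamma)|\) is bounded, so it remains to show that \(\Gamma\) has no odd cycle of length at most \(2t+1\).

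\emph{Step 3: lifting odd cycles.} Suppose \(P_1P_2\cdots P_kP_1\), with \(k\le 2t+1\) odd, is a cycle in \(\Gamma\), realised by edges \(a_ib_i\) with \(b_i,a_{i+1}\in P_{i+1}\). The naive lift joins \(b_i\) to \(a_{i+1}\) through the common centre of their part, a walk of length \(2\). This gives an odd closed walk of length at most \(3k\), which is far beyond \(2t+5\). This is the main obstacle. To overcome it, I would use the distance-layer information from Step 2. Consecutive parts lie in layers whose distances to their centres differ by exactly \(1\), because the odd girth is large and so no edge can lie inside a layer near a centre. Hence the centres \(c_i\), together with shortest paths to them, give a ``radial'' projection of the cycle. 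I would then argue by induction that the cycle can be rerouted through the \(c_i\) so that all but at most two connector steps cancel in pairs. Each cancelling pair contributes an even closed walk, and removing it does not change parity. The result should be an odd closed walk in \(G\) of length at most \(k+4\le 2t+5\), which would be a contradiction.

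If this cancellation argument does not work with single-vertex centres, the fallback is to strengthen Step 1 using a VC-dimension bound for graphs of odd girth at least \(7\), in the manner of Lemma~\ref{lem:vc-reduction-triangle}. Combined with Lemma~\ref{lem:haussler-packing-graph}, this would give parts in which any two vertices share \(\Omega(n)\) common neighbours. We would then choose the connectors simultaneously along the cycle, using these large common neighbourhoods, so that the budget of \(4\) suffices.
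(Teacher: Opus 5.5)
There is a genuine gap. Step~1 asks for a bounded partition in which every part $V_j$ has a common neighbour $c_j$. That is a dominating set of bounded size, and linear minimum degree does not provide one. For example, replace each edge of a balanced blowup of $C_{2t+7}$ by a random bipartite graph of density $1/2$. This graph is $\mathcal C_{2t+5}$-free with $\delta(G)=\Omega(n)$, yet with high probability no set of fewer than $\tfrac12\log_2 n$ vertices dominates it. Re-anchoring inside second neighbourhoods only ever gives distance $2$, never adjacency. (A bounded dominating set would let the argument below lose only $2$ rather than $4$, which would give instances of the open Conjecture~\ref{conj:asymmetric-hom-odd-cycles}. Bounded VC-dimension does supply one, since an $\varepsilon$-net for $\mathcal N(G)$ meets every $N(v)$.)

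More seriously, even granting Step~1, no cancellation argument can rescue Step~3, because your quotient can genuinely contain short odd cycles. Take $t=1$ and let $G$ be the balanced blowup of $C_9$ with parts $U_0,\dots,U_8$, partitioned as $U_8\cup U_1,\ U_2\cup U_4,\ U_5\cup U_7,\ U_0,\ U_3,\ U_6$. These parts lie in the neighbourhoods of vertices of $U_0,U_3,U_6,U_1,U_4,U_7$ respectively. Yet the first three parts span a triangle in $\Gamma$, while $G$ has odd girth $9$. Your distance-layer refinement is vacuous here, since $V_j\subseteq N(c_j)$. The assertion that consecutive parts have distances to their \emph{own} centres differing by exactly $1$ compares distances to different vertices, so it carries no parity information.

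The fallback does not close the gap either. Bounded VC-dimension is not available at arbitrary $\alpha>0$: it fails for non-maximal graphs, and the known reductions for maximal graphs, such as Lemma~\ref{lem:vc-reduction-triangle}, need density above a positive threshold. For a single odd cycle, Theorem~\ref{thm:vc-odd-cycles} shows this is unavoidable. Bounded VC-dimension is precisely the extra hypothesis of Theorem~\ref{thm:vc-asym-odd-cycles}.

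The missing idea is to record distances to \emph{all} anchors at once, so that a single anchor controls an entire cycle of $\Gamma$. Your anchor set is the right start: $x_1,\dots,x_m$ with $m\le 1/\alpha$ and every vertex within distance $2$ of some $x_i$. Send $v$ to its type $\bigl(\min\{d(v,x_i),t+3\}\bigr)_{i\le m}$, and let $\Gamma$ be the graph on the at most $(t+4)^m$ types with edges induced from $G$.

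Types are independent: an edge $uv$ inside a type with $d(u,x_i)\le 2$ has $d(v,x_i)=d(u,x_i)$, which yields an odd closed walk of length at most $5$.

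Now let $T_1T_2\cdots T_kT_1$ be a cycle in $\Gamma$ with $k\le 2t+1$ odd, and pick $i$ so that the vertices of $T_1$ are within distance $2$ of $x_i$. Distances to $x_i$ change by at most $1$ along edges, and every $T_j$ is at most $t$ steps from $T_1$ around the cycle. So every $T_j$ has exact distance at most $t+2$ from $x_i$. An edge of $G$ with both ends at the same distance $r\le t+2$ from $x_i$ would, together with shortest paths to $x_i$, give an odd closed walk of length $2r+1\le 2t+5$, which is impossible. Hence each step of the cycle changes the distance to $x_i$ by exactly $\pm1$, and an odd number of $\pm1$'s cannot sum to $0$.

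No lifting of the cycle is needed. Your budget of $4$ is spent exactly as twice the covering radius $2$.
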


Let us unpack the first case. When \(t=1\), 
Theorem~\ref{thm:asymmetric-hom-odd-cycles} says that every graph of odd girth
at least \(9\) and linear minimum degree admits a homomorphism to a bounded
triangle-free graph. This is genuinely asymmetric: the source graph is required
to avoid all odd cycles up to length \(7\), while the target is only required to
be triangle-free.

The theorem therefore shows that, once the source graph has slightly larger
odd girth than the target is required to have, the minimum-degree threshold can
collapse to zero. Gishboliner--Hurley--Wigderson conjectured that the gap in
Theorem~\ref{thm:asymmetric-hom-odd-cycles} can be reduced by two.

\begin{conj}[Gishboliner--Hurley--Wigderson~\cite{gishboliner_hurley_wigderson_asymmetric_2025}]
\label{conj:asymmetric-hom-odd-cycles}
For every \(t\ge 1\),
\[
    \delta_{\mathrm{hom}}(\mathcal C_{2t+3};\mathcal C_{2t+1})=0.
\]
\end{conj}

\begin{remark}
Conjecture~\ref{conj:asymmetric-hom-odd-cycles} would be best possible in the
following sense. On the diagonal, the theorem of Ebsen--Schacht gives
\[
    \delta_{\mathrm{hom}}(\mathcal C_{2t+1};\mathcal C_{2t+1})
    =
    \frac{1}{2t+1}
    >0.
\]
Thus one cannot replace \(\mathcal C_{2t+3}\) in the source by
\(\mathcal C_{2t+1}\). 
\end{remark}

Overall, homomorphism thresholds refine chromatic thresholds by requiring the
bounded target to retain the forbidden-graph constraint. The results in this
section show that this extra requirement already leads to new values,
separations, accumulation phenomena, and non-monotonicity. In
Section~\ref{sec:blowup-threshold}, we will return to this bounded-template
viewpoint through the stronger notion of blowup thresholds.


\section{Thresholds via VC-dimension}
\label{sec:vc-thresholds}

VC-dimension has already appeared several times as a proof tool in the theory
of minimum-degree thresholds. In the chromatic-threshold problem, it provides an
alternative to regularity-based arguments for converting dense local information
into bounded colourability. In the homomorphism-threshold problem, VC-type
partition and packing ideas help produce bounded homomorphic images and, in
stronger forms, bounded blowup quotients.

The first indication that VC-dimension can affect minimum-degree thresholds
comes already from triangle-free graphs. The classical chromatic threshold is $\delta_\chi(K_3)=1/3$.
However, the VC-dimension method of {\L}uczak--Thomass\'e \cite{luczak_coloring_2010}
shows that, if one restricts to triangle-free graphs of bounded VC-dimension,
then every positive linear minimum-degree condition already forces bounded
chromatic number. In other words, after imposing bounded VC-dimension, the
minimum-degree threshold for bounded colourability in the triangle-free setting
drops from \(1/3\) to \(0\).
This phenomenon suggests that, in the triangle case, VC-dimension is the bottleneck behind the classical chromatic threshold.

This example suggests a natural question: when is VC-dimension a genuine
bottleneck in a minimum-degree threshold problem? 

\subsection{Chromatic and homomorphism thresholds under bounded VC-dimension}
\label{subsec:vc-bounded-chromatic-hom}

The triangle example motivates studying how minimum-degree thresholds change
under the additional assumption of bounded VC-dimension. 
This was
formalised by the second author, Shangguan, Skokan and Xu~\cite{liu_beyond_2024},
who introduced VC-bounded versions of the chromatic and homomorphism thresholds.

\begin{definition}[VC-bounded chromatic and homomorphism thresholds]
\label{def:vc-bounded-thresholds}
Let \(H\) be a fixed graph.
Call a real number \(\alpha\ge 0\) \emph{VC-chromatically admissible} for \(H\)
if, for every \(d\in\mathbb N\), there exists a constant
\(C=C(\alpha,H,d)\) such that every \(n\)-vertex \(H\)-free graph \(G\) with $\mathrm{VC}(G)\le d$ and $\delta(G)\ge \alpha n$ satisfies \(\chi(G)\le C\). Define
\[
    \delta_\chi^{\mathrm{VC}}(H)
    :=
    \inf\{\alpha\ge 0:\alpha \text{ is VC-chromatically admissible for } H\}.
\]

Similarly, call a real number \(\alpha\ge 0\) \emph{VC-homomorphically
admissible} for \(H\) if, for every \(d\in\mathbb N\), there exists an
\(H\)-free graph \(F=F(\alpha,H,d)\) such that every \(n\)-vertex \(H\)-free
graph \(G\) with
$\mathrm{VC}(G)\le d$ and $\delta(G)\ge \alpha n$
satisfies $G\xrightarrow{\mathrm{hom}}F$.
Define
\[
    \delta_{\mathrm{hom}}^{\mathrm{VC}}(H)
    :=
    \inf\{\alpha\ge 0:\alpha \text{ is VC-homomorphically admissible for } H\}.
\]
\end{definition}

Since we restrict to the smaller class of \(H\)-free graphs with bounded
VC-dimension, we trivially have
\[
    \delta_\chi^{\mathrm{VC}}(H)\le \delta_\chi(H)
    \qquad\text{and}\qquad
    \delta_{\mathrm{hom}}^{\mathrm{VC}}(H)\le \delta_{\mathrm{hom}}(H).
\]

For cliques, these two VC-bounded thresholds behave quite differently. The
chromatic threshold drops under the bounded-VC assumption, while the
homomorphism threshold does not.

\begin{theorem}[Kim--Liu--Shangguan--Wang--Wu--Xue~\cite{kim_stability_2025};
Liu--Shangguan--Skokan--Xu~\cite{liu_beyond_2024}]
\label{thm:vc-bounded-cliques}
For every \(s\ge 3\),
\[
    \delta_\chi^{\mathrm{VC}}(K_s)
    =
    \frac{s-3}{s-2}
    <
    \delta_{\chi}(K_s)
    =
    \frac{2s-5}{2s-3},
\]
whereas
\[
    \delta_{\mathrm{hom}}^{\mathrm{VC}}(K_s)
    =
    \delta_{\mathrm{hom}}(K_s)
    =
    \frac{2s-5}{2s-3}.
\]
\end{theorem}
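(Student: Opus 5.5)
The theorem has four parts: the VC-bounded chromatic threshold of \(K_s\) equals \(\frac{s-3}{s-2}\) (upper and lower bound), and the VC-bounded homomorphism threshold equals \(\frac{2s-5}{2s-3}\) (upper and lower bound). The chain \(\delta_{\mathrm{hom}}^{\mathrm{VC}}(K_s)\le\delta_{\mathrm{hom}}(K_s)=\frac{2s-5}{2s-3}\) from Theorem~\ref{thm:hom-cliques} already gives the homomorphism upper bound. The strict inequality \(\frac{s-3}{s-2}<\frac{2s-5}{2s-3}\) is the arithmetic fact \((s-3)(2s-3)<(s-2)(2s-5)\), i.e.\ \(9<10\). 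So three genuine tasks remain.

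\textbf{Lower bound \(\delta_\chi^{\mathrm{VC}}(K_s)\ge\frac{s-3}{s-2}\).} I would use a blowup construction. Take a bounded-VC graph \(B\) of large chromatic number and positive minimum-degree ratio with no \(K_3\); Borsuk-type or Kneser-type geometric graphs should work. Then join it with a balanced complete \((s-3)\)-partite graph. The result is \(K_s\)-free, since a clique uses at most two vertices of \(B\) and at most one from each other part. Its VC-dimension stays bounded, because joins and blowups preserve bounded VC-dimension up to constants. Choosing part sizes appropriately gives minimum degree about \(\frac{s-3}{s-2}n\). I expect the existence of such a \(B\) to be exactly the content that makes \(\delta_\chi^{\mathrm{VC}}(K_3)\ge0\) tight, and only positivity of its degree is needed.

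\textbf{Upper bound \(\delta_\chi^{\mathrm{VC}}(K_s)\le\frac{s-3}{s-2}\).} Fix \(d\) and \(\varepsilon\), and let \(G\) be \(K_s\)-free with \(\mathrm{VC}(G)\le d\) and \(\delta(G)\ge(\frac{s-3}{s-2}+\varepsilon)n\). I would apply Lemma~\ref{lem:haussler-packing-graph} with \(s'=\eta n\) for small \(\eta\) to get a bounded partition into parts of nearly identical neighbourhoods. Each part \(V_i\) then has a common ``core'' neighbourhood. Since vertices in \(V_i\) share almost all of their neighbourhood, a clique of size \(s-2\) inside that core together with an edge in \(V_i\) would give \(K_s\). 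Working in the reduced weighted graph on the parts, I would use the degree condition \(\frac{s-3}{s-2}+\varepsilon\): by a Tur\'an/Zykov-type counting, every dense collection of common neighbourhoods contains a \(K_{s-2}\). This should force \(G[V_i]\) to have bounded chromatic number. The base case is \(s=3\), where \(\frac{s-3}{s-2}=0\) and the {\L}uczak--Thomass\'e bounded-VC result applies directly. Inducting on \(s\) through common neighbourhoods of cliques seems cleanest. If \(v\) lies in a \(K_{s-3}\)-rich region, then the common neighbourhood of a \((s-3)\)-clique is triangle-free, has density inherited from the degree surplus, and has bounded VC-dimension, so the base case colours it.

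\textbf{Lower bound \(\delta_{\mathrm{hom}}^{\mathrm{VC}}(K_s)\ge\frac{2s-5}{2s-3}\).} Here I would modify the Hajnal-type construction joined with \(K_{s-3}\)-blowups. The Kneser component is what destroys bounded VC-dimension, so I would replace it with a bounded-VC gadget whose dense triangle-free graphs at density just below \(1/3\) admit no homomorphism to a bounded triangle-free graph. A natural candidate is a Borsuk-type construction with bounded VC-dimension, in the spirit of Sankar's topological obstruction, placed inside the Hajnal frame. Any bounded triangle-free image would then force a collapse that creates a triangle.

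\textbf{Main obstacle.} I expect this last step to be the hardest. One must simultaneously keep VC-dimension bounded, keep minimum degree near \(\frac{2s-5}{2s-3}\), and rule out every bounded \(K_s\)-free image, not merely bounded colourings. The first thing I would try is to reduce to \(s=3\) via the join with a complete \((s-3)\)-partite graph, checking that a homomorphism to a \(K_s\)-free target restricts to a triangle-free target on the gadget.
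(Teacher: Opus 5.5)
\textbf{The lower bound for $\delta_\chi^{\mathrm{VC}}(K_s)$ asks for an object that does not exist, and the upper bound is one step from done but not closed.} You want triangle-free graphs $B$ with $\mathrm{VC}(B)\le d$, $\delta(B)\ge\beta|B|$ for a fixed $\beta>0$, and unbounded chromatic number. The $s=3$ case forbids exactly this ({\L}uczak--Thomass\'e, which you use yourself as the base case). Indeed, if such $B$ existed, taking $|B|=p/(1-\beta)$ in your join would give $K_s$-free bounded-VC graphs of unbounded chromatic number with minimum degree strictly above $\frac{s-3}{s-2}n$. That contradicts the upper bound you are proving. Neither candidate helps: Kneser graphs have unbounded VC-dimension, and Borsuk graphs on a fixed sphere have bounded chromatic number.

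The repair is to drop the degree condition on $B$. Take $B$ of girth at least $5$ and large chromatic number; a shattered triple would force a $C_4$, so $\mathrm{VC}(B)\le 2$. Set $|B|=p$, equal to the other part sizes. The join alone then gives every vertex degree at least $(s-3)p=\frac{s-3}{s-2}n$.

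For the upper bound, your remark that an edge inside a packing part plus a $K_{s-2}$ in the common neighbourhood yields $K_s$ already finishes the argument. No reduced graph, counting or induction is needed. If $u,v\in V_i$ are adjacent, then $|N(u)\cap N(v)|\ge\delta(G)-2\eta n$. Pick $w_1,\dots,w_{s-2}$ greedily inside this set. At every step at least $(s-2)\delta(G)-(s-3)n-2\eta n\ge((s-2)\varepsilon-2\eta)n>0$ candidates remain. So every $V_i$ is independent and $\chi(G)\le m$, the same mechanism as in the proof of Theorem~\ref{thm:hom-triangle}. Your induction through common neighbourhoods of $(s-3)$-cliques would also need boundedly many of them to cover $V(G)$, which you do not address.

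\textbf{The real gap is $\delta_{\mathrm{hom}}^{\mathrm{VC}}(K_s)\ge\frac{2s-5}{2s-3}$.} Your reduction to $s=3$ is right. A transversal $K_{s-3}$ of the complete part maps to a $K_{s-3}$ in $F$, whose common neighbourhood is triangle-free and receives the gadget $G_3$. Parts of size $\frac23|V(G_3)|$ give minimum degree $(\frac{2s-5}{2s-3}-O(\epsilon))n$ when $\delta(G_3)\ge(\frac13-\epsilon)|V(G_3)|$. But the base case $\delta_{\mathrm{hom}}^{\mathrm{VC}}(K_3)\ge\frac13$ is the whole content, and you only name a candidate.

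By {\L}uczak--Thomass\'e the gadget must have bounded chromatic number, so the obstruction cannot be chromatic. No topology is needed, though. A homomorphism into a triangle-free graph must give distinct images to the two ends of any walk of length $3$. So it suffices to find, for every $N$, bounded-VC triangle-free graphs with $\delta\ge(\frac13-\epsilon)n$ containing $N$ vertices pairwise joined by $3$-walks.

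Hajnal's frame over the circle does this.
\begin{itemize}
\item Let $A$ be a union of $N$ large equal blocks $A_i$ attached to equally spaced points $p_i\in S^1$.
\item Let $B$ be an independent set of size $|A|/2$, complete to $A$.
\item Let $C$ be a net on $S^1$, much finer than $\min\{2\pi/N,\epsilon\}$ but with $|C|=o(|A|)$. Put $x\sim y$ in $C$ iff $\angle(x,y)>\pi-\epsilon$, and make $x$ complete to $A_i$ iff $\angle(x,p_i)<\frac\pi2-\frac\epsilon2$.
\end{itemize}
For small $\epsilon$ this graph is triangle-free: a common neighbour in $A_i$ of adjacent $x,y$ would force $\angle(x,y)<\pi-\epsilon$. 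All neighbourhoods are cut out by arcs, so the VC-dimension is bounded by an absolute constant. Also $\delta\ge(\frac13-\epsilon)n$.

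For $i\ne i'$, the open arc of radius $\frac\pi2+\frac\epsilon2$ about $-p_i$ meets the open arc of radius $\frac\pi2-\frac\epsilon2$ about $p_{i'}$ in an arc of positive length. This yields an edge $xy$ of $C$ with $x$ complete to $A_i$ and $y$ complete to $A_{i'}$. Hence one vertex from each block gives $N$ vertices pairwise joined by $3$-walks, forcing $|V(F)|\ge N$. An argument of this kind is what your last two paragraphs are missing.
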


Thus bounded VC-dimension removes the obstruction responsible for the classical
chromatic threshold of cliques, but it does not remove the obstruction for
bounded \(K_s\)-free homomorphic images. 
In other words, VC-dimension is a
bottleneck for the chromatic clique threshold, but not for the homomorphism clique threshold.

The clique result is part of a more general picture. Gishboliner and the
authors~\cite{gishboliner_huang_liu_vc_thresholds} determined the
VC-bounded chromatic threshold for every graph.

\begin{definition}[Decomposition family]
\label{def:decomposition-family}
Let \(H\) be a graph with \(\chi(H)=r\ge 3\). The decomposition family
\(\mathcal M(H)\) consists of all bipartite graphs obtained by deleting
\(r-2\) colour classes from \(H\) under a proper \(r\)-colouring of \(H\);
equivalently, these are the subgraphs induced by the union of any two
colour classes in a proper \(r\)-colouring.
\end{definition}

\begin{theorem}[Gishboliner--Huang--Liu~\cite{gishboliner_huang_liu_vc_thresholds}]
\label{thm:vc-bounded-chromatic-general}
Let \(H\) be a graph with \(\chi(H)=r\ge 3\). Then
\[
    \delta_\chi^{\mathrm{VC}}(H)
    \in
    \left\{
        \frac{r-2}{r-1},
        \frac{r-3}{r-2}
    \right\}.
\]
More precisely,
\[
    \delta_\chi^{\mathrm{VC}}(H)=\frac{r-3}{r-2}
\]
if and only if the decomposition family \(\mathcal M(H)\) contains a forest;
otherwise,
\[
    \delta_\chi^{\mathrm{VC}}(H)=\frac{r-2}{r-1}.
\]
\end{theorem}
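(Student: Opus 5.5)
We need four ingredients: two lower bounds and two upper bounds. The upper bound \((r-2)/(r-1)\) is the easy one: by Erd\H{o}s--Stone--Simonovits, for any \(\varepsilon>0\) an \(H\)-free graph cannot have minimum degree \((\frac{r-2}{r-1}+\varepsilon)n\) once \(n\) is large. So the admissibility condition is vacuous (or trivially satisfied with a bounded constant for small \(n\)), and \(\delta_\chi^{\mathrm{VC}}(H)\le \frac{r-2}{r-1}\) holds for every \(H\).

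The general lower bound \(\delta_\chi^{\mathrm{VC}}(H)\ge \frac{r-3}{r-2}\) I would get from a bounded-VC blowup construction. Take a balanced blowup of \(K_{r-3}\) on about \(\frac{r-3}{r-2}n\) vertices and join it completely to a set of about \(\frac{1}{r-2}n\) vertices carrying a graph \(G_0\). Here \(G_0\) should have large girth and large chromatic number, bounded VC-dimension, and small but positive linear-free structure. The whole graph contains \(H\) only if \(G_0\) contains a non-bipartite subgraph forced by \(\chi(H)=r\). Choosing \(G_0\) with odd girth larger than \(|H|\) rules this out. The VC-dimension stays bounded because it is the join of a blowup of a bounded graph with \(G_0\), and VC-dimension is subadditive under such operations.

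The minimum degree is then \(\frac{r-3}{r-2}n+\delta(G_0)\). This is slightly below the threshold only when \(\delta(G_0)\) is sublinear, which is fine for a lower bound: we need every \(\alpha<\frac{r-3}{r-2}\) to fail, and with a sublinear \(\delta(G_0)\) we have \(\delta\ge\alpha n\). Large-girth, high-chromatic graphs with bounded VC-dimension exist, for example shift graphs or Kneser-type graphs, or simply graphs of bounded degeneracy with large chromatic number and girth, since bounded degeneracy bounds the VC-dimension.

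For the forest case, the upper bound \(\frac{r-3}{r-2}\) when \(\mathcal M(H)\) contains a forest is the core of the proof. Fix \(\alpha=\frac{r-3}{r-2}+\varepsilon\) and \(d\), and let \(G\) be \(H\)-free with \(\mathrm{VC}(G)\le d\) and \(\delta(G)\ge\alpha n\). Apply Lemma~\ref{lem:haussler-packing-graph} with \(s=\eta n\) to partition \(V(G)\) into boundedly many classes of vertices with nearly identical neighbourhoods, and colour each class separately. Within a class \(V_i\), every edge \(uv\) has \(u,v\) with almost the same neighbourhood of size \(\ge\alpha n\). So a large edge-dense subgraph inside \(V_i\), or inside the common neighbourhood structure, combined with the degree condition, would let us embed \(H\).

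The plan is to show that each \(G[V_i]\) has bounded chromatic number, or even is sparse enough to be degenerate. Suppose \(G[V_i]\) contains a large tree-like structure, i.e.\ high average degree somewhere. The common neighbourhood of any bounded set in \(V_i\) has density about \(\alpha\). A supersaturation/greedy argument in the common neighbourhood at density above \(\frac{r-3}{r-2}\) finds a blown-up \(K_{r-2}\) on which the class embeds, giving \(K_{r-2}\) joined to a large bounded-degree subgraph of \(V_i\). Since some \(F\in\mathcal M(H)\) is a forest, \(H\subseteq K_{r-2}(t)\vee F'\) for a suitable forest \(F'\). Forests embed into any graph of large minimum degree, so this yields \(H\) unless every subgraph of \(G[V_i]\) has bounded minimum degree, i.e.\ bounded degeneracy, which gives bounded chromatic number. \emph{This embedding step is the main obstacle}: making the join structure precise requires a dependent-random-choice or regularity step inside the common neighbourhood, and controlling the \(2s\) neighbourhood error.

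For the non-forest case, the lower bound \(\frac{r-2}{r-1}\) when \(\mathcal M(H)\) has no forest needs a construction. Take a blowup of \(K_{r-2}\) on \(\frac{r-2}{r-1}n\) vertices joined to a part of size \(\frac{n}{r-1}\) carrying a graph \(G_0\) of girth larger than \(|H|\) and unbounded chromatic number. \(G_0\) must have minimum degree \(o(n)\) but positive, so the density deficit vanishes. Every copy of \(H\) would place two colour classes into \(G_0\) as a bipartite graph containing a cycle, which is impossible by the girth condition. Bounded VC-dimension of \(G_0\) again comes from bounded degeneracy, using Erd\H{o}s-type sparse high-girth high-chromatic graphs, which are degenerate with degeneracy comparable to the chromatic number. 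The step I am least sure of is that \(d\) must be fixed uniformly while \(\chi\) grows. If degeneracy is unbounded, I would instead use shift-graph-type or geometric constructions of bounded VC-dimension with large girth and chromatic number, which I expect exist and are the key input.
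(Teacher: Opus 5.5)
The survey only cites this theorem, so I am judging your outline on its own. The architecture is right: Erd\H{o}s--Stone gives the trivial upper bound, joins of a complete multipartite graph with a high-girth graph give the two lower bounds, and a packing-lemma partition handles the forest case. However, both load-bearing steps are left unproved, and the routes you propose for them do not work as stated.

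\textbf{The forest-case upper bound.} You call this the main obstacle and expect to need dependent random choice or regularity. Neither is needed if you fix the forest \emph{first}. Let $F\in\mathcal M(H)$ be a forest on $f$ vertices. Then $H\subseteq K_{r-2}(t)\vee F$, where $t=|V(H)|$ and $K_{r-2}(t)$ is the complete $(r-2)$-partite graph with parts of size $t$. Apply the packing lemma with $s=\varepsilon n/(8f)$. If some $G[V_i]$ is not $(f-2)$-degenerate, it has a subgraph of minimum degree at least $f-1$, and therefore contains $F$. Let $W$ be the common neighbourhood of this copy. Then:
\begin{itemize}
\item $W\cap V(F)=\emptyset$;
\item $|W|\ge \alpha n-2sf\ge(\alpha-\varepsilon/4)n$;
\item every $w\in W$ satisfies $\deg_W(w)\ge \alpha n-(n-|W|)$.
\end{itemize}
Hence
\[
\frac{\deg_W(w)}{|W|}\ \ge\ 1-\frac{1-\alpha}{\alpha-\varepsilon/4}\ >\ 1-\frac{1}{r-3}\qquad(r\ge 4),
\]
with a margin depending only on $\varepsilon$ and $r$. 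This holds \emph{precisely because} $\alpha=\frac{r-3}{r-2}+\varepsilon$. This computation is where the value $\frac{r-3}{r-2}$ comes from, and it is missing from your sketch. The density that matters inside $W$ is the $K_{r-2}$-Tur\'an density $\frac{r-4}{r-3}$, not $\frac{r-3}{r-2}$. Erd\H{o}s--Stone then gives $K_{r-2}(t)\subseteq G[W]$; for $r=3$ you only need $|W|\ge t$. Together with $F$ this yields a copy of $H$. So every $G[V_i]$ is $(f-2)$-degenerate, and $\chi(G)\le m(f-1)$ with $m$ bounded in terms of $d$, $\varepsilon$ and $H$.

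\textbf{The graph $G_0$.} Your justifications for a high-girth, high-chromatic $G_0$ with uniformly bounded VC-dimension fail.
\begin{itemize}
\item A $k$-degenerate graph has $\chi\le k+1$, so ``bounded degeneracy with large chromatic number'' is impossible.
\item Shift graphs of any order contain large complete bipartite graphs: every $(x,a_2,\dots,a_k)$ with $x<a_2$ is adjacent to every $(a_2,\dots,a_k,y)$ with $y>a_k$. So they violate the girth condition your non-forest construction needs.
\end{itemize}
The missing observation is that girth alone suffices. Suppose $\{a,b,c\}$ were shattered. The witnesses for the traces $\{a,b,c\}$ and $\{a,b\}$ are two distinct common neighbours of $a$ and $b$, which gives a $C_4$. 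So every $C_4$-free graph has VC-dimension at most $2$. Erd\H{o}s's graphs of girth greater than $\max\{|V(H)|,4\}$ and arbitrarily large chromatic number therefore serve as $G_0$ in \emph{both} constructions, with a uniform bound.

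Joining with a blowup of $K_{r-3}$ or $K_{r-2}$ keeps the VC-dimension bounded in terms of $r$. On a set $S$, the blowup vertices give at most $r-2$ traces, and the $G_0$-vertices give $O(|S|^2)$ traces by Sauer--Shelah. Finally, $\delta(G_0)$ plays no role. The blowup vertices already have degree exactly $\frac{r-3}{r-2}n$ (resp.\ $\frac{r-2}{r-1}n$), and vertices of $G_0$ have at least that. Your remarks about $\delta(G_0)$ being sublinear but positive, and the formula $\frac{r-3}{r-2}n+\delta(G_0)$, can be dropped.
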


This should be compared with the classical chromatic-threshold classification
of Allen--B\"ottcher--Griffiths--Kohayakawa--Morris. If \(\chi(H)=r\ge 3\),
then \(\delta_\chi(H)\) can take one of the three values
\[
    \frac{r-2}{r-1},
    \qquad
    \frac{2r-5}{2r-3},
    \qquad
    \frac{r-3}{r-2}.
\]
Theorem~\ref{thm:vc-bounded-chromatic-general} says that, under bounded
VC-dimension, the intermediate value
\[
    \frac{2r-5}{2r-3}
\]
disappears. Thus bounded VC-dimension changes the chromatic threshold exactly
in the middle, or clique-type, regime of the classical classification. In this
sense, the intermediate chromatic threshold is caused by high neighbourhood
complexity: once this complexity is ruled out by a bounded-VC assumption, the
threshold drops to the near-acyclic value \((r-3)/(r-2)\).

By contrast, the corresponding picture for homomorphism thresholds remains much
less understood. The clique case in Theorem~\ref{thm:vc-bounded-cliques} shows
that bounded VC-dimension does not always lower the homomorphism threshold.
However, it is not clear whether this is a special feature of cliques or a more
general phenomenon.

\begin{problem}\label{prob:vc-hom-strict}
Does there exist a graph \(H\) such that
\[
    \delta_{\mathrm{hom}}^{\mathrm{VC}}(H)
    <
    \delta_{\mathrm{hom}}(H)?
\]
More generally, determine \(\delta_{\mathrm{hom}}^{\mathrm{VC}}(H)\) for graphs
\(H\) beyond cliques.
\end{problem}

\subsection{Asymmetric homomorphism thresholds under bounded VC-dimension}
\label{subsec:asym-vc-hom}

The clique case above reveals a tension between the chromatic and homomorphic
versions under bounded VC-dimension. On the one hand,
\[
    \delta_\chi^{\mathrm{VC}}(K_s)
    =
    \frac{s-3}{s-2},
\]
while on the other hand
\[
    \delta_{\mathrm{hom}}^{\mathrm{VC}}(K_s)
    =
    \frac{2s-5}{2s-3}.
\]
Thus after imposing bounded VC-dimension, there is a gap between
bounded colourability and bounded \(K_s\)-free homomorphic images.

This raises a natural interpolation question: can one describe, in a smooth
way, how the threshold decreases as the condition on the bounded target is
gradually weakened? Motivated by the asymmetric homomorphism threshold discussed
in Section~\ref{subsubsec:asymmetric-hom-thresholds}, it is natural to introduce
a VC-bounded asymmetric version.

Let \(\mathcal F_1\) and \(\mathcal F_2\) be two families of graphs. Call a real
number \(\alpha\ge 0\) \emph{VC-asymmetrically admissible} for
\((\mathcal F_1;\mathcal F_2)\) if, for every \(d\in\mathbb N\), there exists
an \(\mathcal F_2\)-free graph
\(\Gamma=\Gamma(\alpha,\mathcal F_1,\mathcal F_2,d)\) such that every \(n\)-vertex
\(\mathcal F_1\)-free graph \(G\) with \(\mathrm{VC}(G)\le d\) and
\(\delta(G)\ge \alpha n\) satisfies \(G\xrightarrow{\mathrm{hom}}\Gamma\). Define
\[
    \delta_{\mathrm{hom}}^{\mathrm{VC}}(\mathcal F_1;\mathcal F_2)
    :=
    \inf\{\alpha\ge 0:\alpha \text{ is VC-asymmetrically admissible for }
    (\mathcal F_1;\mathcal F_2)\}.
\]
When \(\mathcal F_i=\{H_i\}\), we write
\[
    \delta_{\mathrm{hom}}^{\mathrm{VC}}(H_1;H_2)
    :=
    \delta_{\mathrm{hom}}^{\mathrm{VC}}(\{H_1\};\{H_2\}).
\]

For cliques, this parameter interpolates between the VC-bounded homomorphism
threshold and the VC-bounded chromatic threshold. Indeed,
\[
    \delta_{\mathrm{hom}}^{\mathrm{VC}}(K_s;K_s)
    =
    \delta_{\mathrm{hom}}^{\mathrm{VC}}(K_s),
\]
whereas allowing the target to be \(K_t\)-free for larger and larger \(t\)
moves the problem closer to bounded colourability. One therefore expects a
chain of thresholds
\[
    \delta_{\mathrm{hom}}^{\mathrm{VC}}(K_s;K_s)
    \ge
    \delta_{\mathrm{hom}}^{\mathrm{VC}}(K_s;K_{s+1})
    \ge
    \delta_{\mathrm{hom}}^{\mathrm{VC}}(K_s;K_{s+2})
    \ge \cdots
\]
whose limiting value is
\[
    \delta_\chi^{\mathrm{VC}}(K_s)
    =
    \lim_{t\to\infty}
    \delta_{\mathrm{hom}}^{\mathrm{VC}}(K_s;K_t).
\]
The authors together with Rong and Xu~\cite{huang_interpolating_2025} determined
each step of this interpolation exactly.

\begin{theorem}[Huang--Liu--Rong--Xu~\cite{huang_interpolating_2025}]
\label{thm:vc-asym-cliques}
For all integers \(t\ge s\ge 3\),
\[
    \delta_{\mathrm{hom}}^{\mathrm{VC}}(K_s;K_t)
    =
    \frac{(s-3)(t-s+2)+1}{(s-2)(t-s+2)+1}.
\]
\end{theorem}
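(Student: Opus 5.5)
The plan has two parts: an explicit lower-bound construction, and a matching upper bound via the packing lemma. Write $k:=t-s+2\ge 2$ and $\alpha^*:=\frac{(s-3)k+1}{(s-2)k+1}$. Two sanity checks on the formula: for $t=s$ it gives $\frac{2s-5}{2s-3}$, matching Theorem~\ref{thm:vc-bounded-cliques}, and as $t\to\infty$ it tends to $\frac{s-3}{s-2}=\delta_\chi^{\mathrm{VC}}(K_s)$.

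\textbf{Lower bound.} I would generalise the clique construction to a blowup of a fixed pattern. Take $s-3$ large independent sets, each complete to everything else, and join them to a $K_3$-free piece $B$. Then every $K_s$ in the whole graph corresponds to a triangle in $B$. For $B$ I would use a dense triangle-free graph of bounded VC-dimension. Its bounded-size triangle-free homomorphic images must contain $K_{k+1}$-like structure, so any bounded target forces a $K_{t}$ once the other $s-3$ parts are added, since $(s-3)+(k+1)=t$.

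A natural candidate for $B$ is a blowup of a Borsuk/Kneser-type or generalised Andr\'asfai-type graph. It should be triangle-free, have minimum degree ratio $\frac{1}{(s-2)k+1}$ relative to the whole graph after weighting, and have the property that any homomorphism to a bounded graph $F$ whose image of $B$ lies in a common neighbourhood of $s-3$ cliques forces a $K_{k+1}$ in that image. The weights are then optimised so that $\delta(G)/n\to\alpha^*$; the denominator $(s-2)k+1$ suggests $k$ "heavy" parts of weight proportional to $s-2$ plus one extra unit. Bounded VC-dimension is automatic because $G$ is a blowup of a fixed graph plus a bounded-VC piece.

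\textbf{Upper bound.} Fix $\varepsilon>0$ and $d$, and let $G$ be $K_s$-free with $\mathrm{VC}(G)\le d$ and $\delta(G)\ge(\alpha^*+\varepsilon)n$. Unlike the triangle case, I cannot pass to a maximal $K_s$-free supergraph, since that might destroy bounded VC-dimension. Instead I would apply Lemma~\ref{lem:haussler-packing-graph} directly with $s'=\eta n$ to get a partition $V_1,\dots,V_m$ into parts of nearly identical neighbourhoods. I would then define a reduced graph $R$ on $[m]$, with $ij\in E(R)$ when some edge goes between $V_i$ and $V_j$; equivalently, by the near-identical neighbourhoods, when $e(V_i,V_j)$ is substantial. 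This gives a homomorphism from $G$ to $R$, up to handling edges inside parts by splitting parts via a bounded colouring from Theorem~\ref{thm:vc-bounded-chromatic-general}, since $\alpha^*>\frac{s-3}{s-2}$. The task is then to show $R$ is $K_t$-free.

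\textbf{Main obstacle.} Suppose $R$ contains $K_t$ on parts $V_{i_1},\dots,V_{i_t}$. Picking representatives, neighbourhoods in $G$ are each of size at least $(\alpha^*+\varepsilon)n$ and are nearly shared within each part. The degree condition must then produce a $K_s$ in $G$ by a counting argument over the $t$ parts: greedily find $s-3$ vertices with a large common neighbourhood, then a triangle inside it using $k+1$ of the remaining parts. The numerics $\frac{(s-3)k+1}{(s-2)k+1}$ should come out of an averaging/LP argument, namely that the common neighbourhood of $j$ vertices has size at least $j\alpha n-(j-1)n$, combined with the $K_{k+1}$-clique among parts forcing an edge inside a common neighbourhood. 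Making this inclusion–exclusion tight, and matching it to the extremal weights, is the step I expect to be hardest. I would first try the case $s=3$, where the claim is $\delta_{\mathrm{hom}}^{\mathrm{VC}}(K_3;K_t)=\frac{1}{t}$, and then lift it to general $s$ by taking common neighbourhoods of $s-3$ vertices.
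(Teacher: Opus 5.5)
The survey only cites this theorem, so there is no written proof to compare against. Judged on its own, your plan has a sensible architecture but leaves both essential steps unproved.

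\textbf{Lower bound.} Your reduction to $s=3$ is correct. Take $s-3$ independent sets of size $\frac{k}{(s-2)k+1}n$, each complete to the rest, and a triangle-free $B$ on the remaining $\frac{k+1}{(s-2)k+1}n$ vertices with $\delta(B)\approx\frac{1}{k+1}|B|$; so the weights are $k,\dots,k,k+1$. Any homomorphism $\phi:G\to F$ with $F$ $K_t$-free then restricts to $B\to F[\phi(B)]$, where $F[\phi(B)]$ is $K_{k+1}$-free (not triangle-free, as you write).

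But this only shows that the lower bound is equivalent to $\delta^{\mathrm{VC}}_{\mathrm{hom}}(K_3;K_{k+1})\ge\frac{1}{k+1}$. That means constructing triangle-free graphs of uniformly bounded VC-dimension with $\delta(B)\ge(\frac{1}{k+1}-o(1))|B|$ that admit no homomorphism to any fixed bounded $K_{k+1}$-free graph. This gadget is the whole content of the lower bound, and you do not construct it. For $k=2$, blowups of Andr\'asfai graphs $A_r$ work: each $A_r$ is twin-free and maximal triangle-free, so every triangle-free image of $A_r$ is injective. For $k\ge 3$, naming ``Borsuk/Kneser/generalised Andr\'asfai'' is not an argument, and the most natural candidate fails. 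The graph on $S^2$ joining points at angle $>2\pi/3$ is triangle-free with degree ratio tending to $\frac14$. Yet it maps to a single bounded $K_4$-free graph: take cells of a partition of $S^2$ into pieces of small diameter $\rho$, adjacent when they contain points at angle $\ge 2\pi/3$. This graph is $K_4$-free because four unit vectors with pairwise inner products below $-\frac12+O(\rho)$ do not exist.

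\textbf{Upper bound.} The packing partition, refined by a bounded colouring (legitimate, since $\alpha^*>\frac{s-3}{s-2}$) and given ``at least one edge'' adjacency, does yield a homomorphism to a bounded graph $R$. For $s=3$ it is easy to see that $R$ is $K_t$-free: an edge $ab$ between two parts has $N(a)\cap N(b)=\emptyset$, so $t$ representatives have pairwise almost disjoint neighbourhoods, and $t(\frac1t+\varepsilon)n\le n+O(t^2\eta n)$ is a contradiction.

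For $s\ge 4$, however, the route you sketch does not work. Adjacency in $R$ is not equivalent to $e(V_i,V_j)$ being large: parts may be tiny, and an edge may be one of the $2\eta n$ exceptional adjacencies. So a $K_t$ in $R$ is only virtual, with one witness edge per pair and unrelated endpoints. Nothing makes witness vertices from different pairs adjacent, so you cannot ``find $s-3$ vertices and then a triangle using $k+1$ of the parts''.

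What you actually get is that each $N(v_i)\cap N(v_j)$ is $K_{s-2}$-free up to $O(\eta n)$ vertices, and the naive consequences of this are far from sharp. With $\beta=1-\alpha$, the bound $|N(v_i)\cap N(v_j)|\le(s-3)\beta n+O(\eta n)$ plus Bonferroni gives, for $s=t=4$, a contradiction only for $\beta<\frac{3}{10}$ rather than $\beta<\frac25$. Inducting on $s$ inside a common neighbourhood also fails, because the representatives and witness edges need not lie in it.

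What is needed is a sharp counting lemma for such virtual cliques. For $s=4$, for instance, one can track $T(x)=\{i:x\in N(v_i)\}$ and, up to $O(\eta)$ errors, show that $|T(x)|\le\frac{1-\beta}{1-2\beta}$ and that neighbours of $x$ share at most one index with $T(x)$; one then double counts $\sum_x|T(x)|=\sum_i|N(v_i)|$. This is exactly the step you flag as hardest and leave open.
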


Moreover, the authors together with Rong and Xu~\cite{huang_interpolating_2025}
proved Conjecture~\ref{conj:asymmetric-hom-odd-cycles} under the additional
assumption of bounded VC-dimension.

\begin{theorem}[Huang--Liu--Rong--Xu~\cite{huang_interpolating_2025}]
\label{thm:vc-asym-odd-cycles}
For every integer \(t\ge 1\),
\[
    \delta_{\mathrm{hom}}^{\mathrm{VC}}
    (\mathcal C_{2t+3};\mathcal C_{2t+1})
    =
    0.
\]
\end{theorem}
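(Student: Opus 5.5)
We need to show that for every $\alpha>0$ and $d\in\mathbb N$ there is a bounded $\mathcal C_{2t+1}$-free graph $\Gamma$ such that every $\mathcal C_{2t+3}$-free graph $G$ with $\mathrm{VC}(G)\le d$ and $\delta(G)\ge\alpha n$ maps homomorphically to $\Gamma$. The plan is to imitate the proof of Theorem~\ref{thm:hom-triangle}. First apply Lemma~\ref{lem:haussler-packing-graph} with $s:=\alpha n/10$. This gives a partition $V(G)=V_1\cup\cdots\cup V_m$ with $m\le e(d+1)(20e/\alpha)^d$, hence bounded in terms of $\alpha$ and $d$ only, such that $|N(u)\triangle N(v)|\le \alpha n/5$ for all $u,v$ in the same part. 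In particular, any two vertices in the same part share at least $(4/5)\alpha n>0$ common neighbours.

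Let $\Gamma$ be the quotient graph: its vertices are the parts, and $V_iV_j$ is an edge (loops allowed a priori) whenever $G$ has at least one edge between $V_i$ and $V_j$. The map sending each vertex to its part is trivially a homomorphism $G\to\Gamma$, and $|\Gamma|=m$ is bounded. Two things remain.

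\textbf{No loops.} If $uv\in E(G)$ with $u,v\in V_i$, then $u,v$ have a common neighbour, so $G$ contains a triangle. This is excluded because $C_3\in\mathcal C_{2t+3}$.

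\textbf{$\Gamma$ is $\mathcal C_{2t+1}$-free.} This is the main step. Suppose $\Gamma$ contains an odd cycle $V_{i_1}V_{i_2}\cdots V_{i_\ell}$ with $\ell\le 2t+1$ odd. For each consecutive pair choose a witness edge $x_jy_{j+1}$ with $x_j\in V_{i_j}$ and $y_{j+1}\in V_{i_{j+1}}$ (indices mod $\ell$). Inside each part $V_{i_j}$ we must connect the endpoint $y_j$ to $x_j$. Since these two vertices lie in the same part, they have a common neighbour $z_j$, giving a path $y_jz_jx_j$ of length $2$. Concatenating yields a closed walk of length $3\ell\le 6t+3$, which is too long.

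To do better, we use bridging economically. Rather than choosing arbitrary witness edges and padding every part, we connect at only one part. Choose the witnesses so that $y_{j+1}=x_{j+1}$ for all but one index. This is possible if we can lift the cycle by walking: start at $x_1\in V_{i_1}$ and extend along edges into the successive parts. Lifting works when every vertex of $V_{i_j}$ has a neighbour in $V_{i_{j+1}}$. That holds up to the symmetric-difference error: if some vertex of $V_i$ has at least $\alpha n/5+1$ neighbours in $V_j$, then every vertex of $V_i$ has a neighbour there.

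So I would redefine $\Gamma$ to contain the edge $V_iV_j$ only when some vertex has $>\alpha n/5$ neighbours across. The remaining \emph{sparse} edges must then be absorbed. I would try to handle them by refining the partition: add a bounded number of extra parts, or map the endpoints of sparse edges using the common-neighbourhood structure. This cleanup is the step I am least sure of.

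With dense edges only, an odd cycle of length $\ell$ in $\Gamma$ lifts to a walk $x_1x_2\cdots x_\ell x'_1$ with $x'_1\in V_{i_1}$. Closing it through a common neighbour of $x_1$ and $x'_1$ gives an odd closed walk of length $\ell+2\le 2t+3$. An odd closed walk contains an odd cycle of no greater length, which contradicts $G$ being $\mathcal C_{2t+3}$-free. This explains exactly why the source family is $\mathcal C_{2t+3}$ while the target family is $\mathcal C_{2t+1}$: a gap of two is the cost of one bridging step.

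The main obstacle is the sparse cross edges. A cleaner route may be to first delete a small set of edges, or to pass to a cleaned subgraph and then argue by minimum degree that every vertex still sends an edge into a dense pair. The fallback is an iterated packing argument in which the parts are chosen so that every cross pair is either empty or has all vertices adjacent to some neighbour there.
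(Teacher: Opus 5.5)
\textbf{There is a genuine gap, and it is the step you flag yourself: absorbing the ``sparse'' cross edges.} This step is the whole difficulty. Your lift-and-bridge computation is correct, and it correctly explains the gap of two. But it needs a partition in which $e(V_i,V_j)>0$ forces \emph{every} vertex of $V_i$ to have a neighbour in $V_j$, and nothing in the proposal produces one.

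Keeping only dense pairs in $\Gamma$ destroys the homomorphism. The sparse edges cannot be discarded: a homomorphism must respect every edge of $G$, and a pair in which each vertex has at most $\alpha n/5$ neighbours across may still carry $\Theta(n^2)$ edges.

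Worse, in natural examples the dense graph is empty. Take finite, well-distributed approximations of Borsuk graphs on $S^k$, with $x\sim y$ when the angle between them exceeds $\pi-\theta$. They have bounded VC-dimension (neighbourhoods are caps), linear minimum degree, and odd girth tending to infinity as $\theta\to 0$. Two vertices whose caps differ in at most $\alpha n/5$ points are at angle $O(\theta/\sqrt{k})$. So each packing part lies in a cap of that radius and has roughly at most $(C/\sqrt{k})^k\alpha n$ vertices. For large $k$, no vertex has $\alpha n/5$ neighbours in any single part, and your dense $\Gamma$ has no edges at all.

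Your dense condition is also one-directional: it does not give density from $V_j$ back to $V_i$. So even odd cycles of the dense $\Gamma$ need not lift to walks. The proposed repairs come with no argument: deleting edges, adding a bounded number of parts, or an iterated packing yielding a bounded ``out-regular'' partition. In the Borsuk examples, boundary effects at the rims of antipodal caps work against the last of these.

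\textbf{A missing idea that closes the gap.} You need a parity contradiction that does not require lifting cycles of $\Gamma$ to walks in $G$. One way uses bounded VC-dimension differently.

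By the $\varepsilon$-net theorem of Haussler--Welzl applied to $\mathcal N(G)$, whose members all have size at least $\alpha n$, there is $T\subseteq V(G)$ of size $O(\frac{d}{\alpha}\log\frac{1}{\alpha})$ meeting every neighbourhood. Label each $v$ by $\phi(v)=(\min\{d_G(v,\tau),t+2\})_{\tau\in T}$. Let $\Gamma$ be the graph on labels in which $\phi(u)\phi(w)$ is an edge whenever $uw\in E(G)$. Then $|V(\Gamma)|\le (t+3)^{|T|}$, and $\phi$ is a homomorphism. $\Gamma$ is loopless, since a common neighbour in $T$ of the ends of an edge would close a triangle.

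Now take an odd closed walk $P_0P_1\cdots P_\ell=P_0$ in $\Gamma$ with $\ell\le 2t+1$. Choose $\tau\in T$ with $P_0(\tau)=1$ and set $c_k=P_k(\tau)$.
\begin{itemize}
\item Each step changes $c_k$ by at most one, so $c_k\le 1+\lfloor \ell/2\rfloor\le t+1$ throughout, and no truncation occurs.
\item Suppose a step has $c_k=c_{k+1}=j$. It is realised by an edge $uw$ with $d_G(u,\tau)=d_G(w,\tau)=j\le t+1$. This gives an odd closed walk of length $2j+1\le 2t+3$ in $G$, which $\mathcal C_{2t+3}$-freeness excludes.
\item Hence all $\ell$ steps are $\pm1$, which is impossible for odd $\ell$ with $c_\ell=c_0$.
\end{itemize}

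A single target is the disjoint union of all $\mathcal C_{2t+1}$-free graphs on at most $(t+3)^{|T|}$ vertices. The gap of two enters exactly where you anticipated: it is the price of starting at distance $1$, rather than $0$, from $\tau$.
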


\begin{remark}
The same work proves a slightly stronger statement:
\[
    \delta_{\mathrm{hom}}^{\mathrm{VC}}
    (\{C_{2t+3},C_{2t+1}\};C_{2t+1})
    =
    0.
\]
Thus the bounded-VC assumption verifies a weak version of the asymmetric
odd-cycle conjecture even when the source is only required to forbid the two
cycles \(C_{2t+3}\) and \(C_{2t+1}\), rather than all shorter odd cycles.
\end{remark}

\subsection{The VC threshold}
\label{subsec:vc-threshold}

In the preceding subsections, bounded VC-dimension was imposed as an additional
hypothesis. Under this hypothesis, chromatic thresholds can decrease
substantially, while homomorphism thresholds may behave differently: for
cliques, bounded VC-dimension lowers the chromatic threshold but does not lower
the homomorphism threshold. This suggests that VC-dimension can be a genuine
source of obstruction in minimum-degree threshold problems.

It is therefore natural to ask the complementary question: when does a
minimum-degree condition itself force bounded VC-dimension? In other words,
instead of assuming that the neighbourhood set system has bounded complexity,
we ask when bounded neighbourhood complexity is a consequence of density and
forbidden-subgraph structure.

For this question one needs a maximality assumption. VC-dimension is highly
sensitive to edge deletion: by deleting edges from a dense \(H\)-free graph, one
can create many different neighbourhood traces while preserving \(H\)-freeness
and, in many cases, linear minimum degree. Maximality rules out this artificial
source of complexity.

Recall that an \(H\)-free graph \(G\) is \emph{maximal \(H\)-free} if adding any
missing edge creates a copy of \(H\). The authors together with Rong and
Xu~\cite{huang_interpolating_2025} introduced the following threshold.

\begin{definition}[VC threshold]\label{def:vc-threshold}
For a fixed graph \(H\), define \(\delta_{\mathrm{VC}}(H)\) to be the infimum
over all \(\alpha\ge 0\) for which there exists \(d=d(\alpha,H)\) such that
every \(n\)-vertex maximal \(H\)-free graph \(G\) with \(\delta(G)\ge \alpha n\)
satisfies \(\mathrm{VC}(G)\le d\).
\end{definition}

Thus \(\delta_{\mathrm{VC}}(H)\) measures the minimum-degree density above which
maximal \(H\)-free graphs are forced to have bounded neighbourhood complexity.

In this terminology, the VC-dimension reduction of
{\L}uczak--Thomass\'e~\cite{luczak_coloring_2010} already gives the first basic
example:
$\delta_{\mathrm{VC}}(K_3)\le 1/3$.
More precisely, in the maximal triangle-free setting the VC-dimension method gives bounded neighbourhood VC-dimension whenever
\(\delta(G)\ge (1/3+\varepsilon)|V(G)|\).
This should be compared with the classical value
\(\delta_\chi(K_3)=1/3\): in the triangle case, bounded VC-dimension is forced
at exactly the density where bounded colourability first becomes unavoidable.

The authors together with Rong and Xu~\cite{huang_interpolating_2025} extended
this upper-bound phenomenon from triangles to all odd cycles. More precisely,
they proved that for every \(k\ge 3\),
\[
    \delta_{\mathrm{VC}}(C_{2k-1})
    \le
    \frac{1}{2k-1}.
\]
The matching lower bound was later obtained by Gishboliner and the
authors~\cite{gishboliner_huang_liu_vc_thresholds}. Thus the VC threshold of a
single odd cycle is now known exactly.

\begin{theorem}[Huang--Liu--Rong--Xu~\cite{huang_interpolating_2025};
Gishboliner--Huang--Liu~\cite{gishboliner_huang_liu_vc_thresholds}]
\label{thm:vc-odd-cycles}
For every \(k\ge 2\),
\[
    \delta_{\mathrm{VC}}(C_{2k-1})
    =
    \frac{1}{2k-1}.
\]
\end{theorem}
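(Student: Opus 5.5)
We prove the two inequalities separately. The case \(k=2\) is the triangle: Lemma~\ref{lem:vc-reduction-triangle} gives \(\delta_{\mathrm{VC}}(K_3)\le 1/3\). For the lower bound we will exhibit maximal triangle-free graphs with \(\delta\ge(1/3-\varepsilon)n\) and unbounded VC-dimension, constructed below as the case \(k=2\) of a general scheme. So the real content is \(k\ge 3\).

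\emph{Upper bound.} Fix \(\varepsilon>0\) and a maximal \(C_{2k-1}\)-free graph \(G\) with \(\delta(G)\ge(\frac1{2k-1}+\varepsilon)n\). By maximality, every non-edge \(uv\) is joined by a path of length \(2k-2\). We aim to mimic the proof of Lemma~\ref{lem:vc-reduction-triangle}.

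Suppose \(G\) shatters a large set \(S\). Choose a large subset \(S'\subseteq S\), together with witnesses \(v_T\) for each \(T\subseteq S'\), so that the neighbourhoods of the relevant vertices behave like those in a dense blowup. The degree condition \(\delta>n/(2k-1)\) should then force two of these neighbourhoods to overlap, or to be linked by a short path. Combined with the shattering pattern, this produces an odd closed walk of length exactly \(2k-1\) that can be made into a cycle.

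The key tool is the standard fact that in a \(C_{2k-1}\)-free graph with \(\delta>n/(2k-1)\), the balls around a vertex grow quickly. So among any \(2k-1\) disjoint neighbourhoods, two must intersect. Using the shattered set, we route paths of controlled parity and length from a vertex \(x\in S'\) back to itself through chosen witnesses. Ramsey-type cleaning of \(S'\) then lets us pick walk lengths whose sum is \(2k-1\) and whose vertices are distinct. I expect this length-exact routing, while avoiding repeated vertices, to be the main obstacle. Because shorter odd cycles are allowed, parity arguments alone do not suffice, and the short odd cycles must be dealt with by the cleaning step.

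\emph{Lower bound.} For each \(d\) and \(\varepsilon\), we need a maximal \(C_{2k-1}\)-free graph with \(\delta\ge(\frac1{2k-1}-\varepsilon)n\) and VC-dimension at least \(d\).

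Start from a balanced blowup of \(C_{2k+1}\), which is \(C_{2k-1}\)-free and has minimum degree \(2n/(2k+1)\). That density exceeds the target, so instead use an Andrásfai-type or unbalanced blowup whose minimum degree is close to \(n/(2k-1)\). Inside one part, plant a small gadget: a bipartite graph between a set \(S\) of \(d\) vertices and \(2^d\) vertices \(v_T\), with \(v_T\) adjacent exactly to \(T\). This shatters \(S\).

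Then complete the whole graph greedily to a maximal \(C_{2k-1}\)-free graph. Two things must be checked:
\begin{itemize}
\item the completion does not add edges from each \(v_T\) to \(S\setminus T\), which would destroy the shattering;
\item the minimum degree stays at about \(n/(2k-1)\).
\end{itemize}
For the first point, we place \(S\) and the \(v_T\) so that every pair \(v_T s\) with \(s\notin T\) is already closed off by a path of length \(2k-2\) through the blowup. This makes adding \(v_Ts\) create a \(C_{2k-1}\), so the pair is never filled in. The gadget vertices should receive large neighbourhoods in the adjacent blowup parts, so their degrees are also of order \(n/(2k-1)\).
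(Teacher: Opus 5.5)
\textbf{Upper bound.} For $k=2$ you may simply quote Lemma~\ref{lem:vc-reduction-triangle}. For $k\ge 3$, however, what you give is a plan rather than an argument. Its decisive step, turning a large shattered set into a cycle of length exactly $2k-1$, is the one you yourself leave open as the ``main obstacle''. (The survey does not prove this direction either; it quotes Huang--Liu--Rong--Xu.) The only quantitative input you name is the pigeonhole fact that $2k-1$ sets of size $>n/(2k-1)$ cannot be pairwise disjoint. That fact uses neither $C_{2k-1}$-freeness nor maximality. The configuration you start from, a large shattered set with witnesses $v_T$ and $\delta>n/(2k-1)$, also occurs in dense bipartite graphs, which have no odd cycles at all. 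So any contradiction must come from maximality, and your sketch never uses maximality beyond stating it. In the triangle case, maximality and the density $1/3$ meet in a sharp dichotomy: by triangle-freeness edges have no common neighbour, while Claim~\ref{claim:triangle-large-codegree} gives non-edges at least $3\varepsilon n$ common neighbours, and $\delta>n/3$ is exactly what makes that count positive. You are missing two things. First, an analogue for $C_{2k-1}$ in which $1/(2k-1)$ enters sharply, such as some robust form of ``non-edges are closed by $(2k-2)$-paths, edges are not''. Second, an argument converting that dichotomy plus a shattered set into a $C_{2k-1}$. ``Ramsey-type cleaning'' supplies neither.

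\textbf{Lower bound.} No construction is actually specified, and the one described cannot work for $k\ge 3$. Suppose $s$ and $v_T$ sit in a part $V_i$ with large neighbourhoods in the adjacent part $V_{i+1}$. Then for $s\in T$ the gadget edge $sv_T$ closes a $C_{2k-1}$ with a path $s,x_1,y_1,x_2,\dots,y_{k-2},x_{k-1},v_T$ alternating between $V_{i+1}$ and $V_i$.

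There is also a more fundamental obstruction. Suppose the base is a blowup of a graph $F$, each gadget vertex meets every part in either no vertex or many, and, as you propose, non-edges $v_Ts'$ are closed by $(2k-2)$-paths running through the base. The interior of such a path has length $2k-4\ge 2$, and vertices in a part are twins in the base. Hence, for fixed $v_T$, whether such a path reaches $s$ depends only on the set of parts met by $N(s)$. If $d>2^{|V(F)|}$, two elements $s\ne s'$ of $S$ meet the same parts. For $T=\{s\}$, the non-edge $v_Ts'$ needs a closing path while the edge $v_Ts$ must have none (else a $C_{2k-1}$), which is a contradiction. So either the base must grow with $d$, or the closing paths must pass through the gadget. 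Reconciling $C_{2k-1}$-freeness, this exact closure pattern for all $2^d$ witnesses, and degrees close to $n/(2k-1)$ is precisely what a lower-bound construction (the later Gishboliner--Huang--Liu result) must achieve, and your proposal does not attempt it.

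Two smaller points. Your worry about the minimum degree dropping is moot, since completion only adds edges. And the $C_{2k+1}$-blowup ``exceeding the target'' is not the real issue: a working gadget at higher density would be even better, and only the upper bound rules it out.

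For $k=2$ there is a short fix inside the survey: $\frac13=\delta_\chi(K_3)\le\max\{\delta_{\mathrm{VC}}(K_3),\delta_\chi^{\mathrm{VC}}(K_3)\}$ together with $\delta_\chi^{\mathrm{VC}}(K_3)=0$ gives $\delta_{\mathrm{VC}}(K_3)\ge\frac13$. This route is unavailable for $k\ge3$ precisely because $\delta_\chi(C_{2k-1})=0$.
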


This should be compared with the chromatic and homomorphism thresholds. For
\(k\ge 3\), Thomassen's theorem gives
$\delta_\chi(C_{2k-1})=0$,
whereas the exact value of \(\delta_{\mathrm{hom}}(C_{2k-1})\) remains open.
Thus, for odd cycles, the VC threshold is strictly stronger than the chromatic
threshold and sits at the same scale as the best known homomorphism-threshold
upper bound.

Gishboliner and the authors~\cite{gishboliner_huang_liu_vc_thresholds} further
studied the general behaviour of the VC threshold. Their first result shows
that, unlike the chromatic threshold, the VC threshold never attains the
near-acyclic value.

\begin{theorem}[Gishboliner--Huang--Liu~\cite{gishboliner_huang_liu_vc_thresholds}]
\label{thm:vc-general-lower-bound}
Let \(H\) be a graph with \(\chi(H)=r\ge 3\). Then
\[
    \delta_{\mathrm{VC}}(H)
    >
    \frac{r-3}{r-2}.
\]
\end{theorem}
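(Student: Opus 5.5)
To prove \(\delta_{\mathrm{VC}}(H)>\frac{r-3}{r-2}\), we construct, for some fixed \(\alpha>\frac{r-3}{r-2}\) and every \(d\), a maximal \(H\)-free graph \(G\) with \(\delta(G)\ge\alpha|V(G)|\) and \(\mathrm{VC}(G)>d\). Since \(H\) is \(r\)-chromatic, every graph homomorphic to a graph of chromatic number at most \(r-1\) is automatically \(H\)-free. This freedom is what we exploit: build \(G\) as an \((r-1)\)-colourable graph with large neighbourhood complexity, and then check that maximality (after completing) does not destroy this complexity.

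\textbf{Construction.} Start with a balanced complete \((r-2)\)-partite graph on parts \(V_1,\dots,V_{r-2}\). Add one more part, a set \(W\) of size \(\beta n\), placed inside \(V_{r-2}\)'s side so that it is bipartite-glued. Concretely, take the \((r-1)\)-partite graph with parts \(V_1,\dots,V_{r-3},A,B\), where \(G[A,B]\) is a bipartite graph of large VC-dimension and everything else is complete between parts. Inside \(G[A,B]\) we plant a shattered configuration. Choose \(S\subseteq A\) of size \(d+1\) and, for each \(T\subseteq S\), a vertex \(b_T\in B\) with \(N(b_T)\cap S=T\). Then fill the rest of \(A\times B\) densely, e.g.\ all other pairs except those forced.

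Since \(\chi(G)\le r-1\), \(G\) is \(H\)-free. Degrees are about \(\frac{r-3}{r-2}\)-ish plus the contribution from the dense \(A\)--\(B\) graph. With parts chosen unbalanced (\(A,B\) of size \(\gamma n\) each and \(G[A,B]\) complete apart from the \(O(1)\) planted non-edges), the minimum degree is \(\min\{(1-\tfrac{|V_i|}{n}),\,1-\gamma\}n\). With \(r-3\) parts of size \(x\) and \(2\gamma+(r-3)x=1\), this is maximised at \(x=\gamma\), giving \(1-\frac1{r-1}=\frac{r-2}{r-1}>\frac{r-3}{r-2}\). So density is fine.

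\textbf{Maximalisation.} Next, extend \(G\) to a maximal \(H\)-free supergraph \(G'\), and argue that \(S\) remains shattered. This requires that each planted non-edge \(ab_T\) (\(a\in S\setminus T\)) cannot be added without creating \(H\). The dangerous additions are the planted non-edges themselves, together with edges inside parts.

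\textbf{Main obstacle.} The main obstacle is precisely this step: we need adding a single edge (or the edges added greedily) to create a copy of \(H\). Adding an edge \(ab\) inside the bipartite layer makes \(ab\) lie in a \(K_{r}\)-like structure \(\{a,b\}\cup\) the complete \((r-3)\)-partite rest, which contains only a \(K_{r-1}\) plus blow-up, not necessarily \(H\). So we must use the density of the construction more cleverly. Edges inside parts are harmless to worry about, since they create \(K_r\)-blowups minus something.

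My first attempt would be this. Let \(\mathcal M(H)\) give a bipartite \(M\) and a colour class structure such that \(H\subseteq\) the join of \(K_{r-3}\)-blowup with "\(M\) plus one edge". Then replace the complete \(A\)--\(B\) part by a complete bipartite graph in which the planted vertices \(a\in S\), \(b_T\) have huge common structure, so that any added non-edge \(ab_T\) together with a large complete bipartite graph \(K_{m,m}\) in \(A\cup B\) attached to \(a,b_T\) yields the two colour classes of \(H\) with one extra edge. That is, \(H\) minus an edge is \((r-1)\)-colourable with the edge's endpoints sharing a class. Concretely, pick an edge \(e=uv\) of \(H\) such that \(H-e\) has an \((r-1)\)-colouring with \(u,v\) in the same class (possible by taking a critical edge when \(H\) is edge-critical; otherwise pass to an \(r\)-critical subgraph of \(H\), noting that \(H\)-freeness is only used via this subgraph and monotonicity of freeness). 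Then put \(a,b_T\) both in a common "class" embedding: adding \(ab_T\) creates a copy of this critical subgraph, hence of \(H\) in the relevant sense. This forces the needed non-edges to survive maximalisation.

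Finally, check that greedy completion adds no edges among the \(b_T\)'s neighbourhoods on \(S\), and conclude that \(\mathrm{VC}(G')\ge d+1\) with \(\delta(G')\ge\delta(G)\).
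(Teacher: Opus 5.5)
There is a genuine gap, at exactly the step you flagged as the main obstacle: the maximal completion. It cannot be fixed inside an $(r-1)$-colourable framework.

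Suppose $G'$ is maximal $H$-free and has a proper $(r-1)$-colouring. Then any non-edge $uv$ between two different colour classes can be added while keeping that colouring proper, so $G'+uv$ is still $H$-free. Hence $G'$ is complete $(r-1)$-partite, has at most $r-1$ distinct neighbourhoods, and $\mathrm{VC}(G')\le\log_2(r-1)$. So any witness graph for the theorem must have $\chi\ge r$, and its $H$-freeness cannot be certified by colourability.

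Applied to your $G$, this means adding a planted non-edge $ab_T$ (with $a\in A$, $b_T\in B$) never creates $H$. So $S$ can stay shattered only if the completion also adds edges inside the parts. You have no control over which such edges appear or what they do to the traces on $S$; your closing sentence simply asserts that nothing goes wrong.

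For $H=K_r$ the plan is provably impossible. Your $G$ has $\delta(G)\ge\frac{r-2}{r-1}n-2^{d}>\frac{3r-7}{3r-4}n$ for large $n$. By Andr\'asfai--Erd\H{o}s--S\'os, every maximal $K_r$-free supergraph is then $(r-1)$-partite, hence complete $(r-1)$-partite with bounded VC-dimension. Put differently, your argument would give $\delta_{\mathrm{VC}}(H)\ge\frac{r-2}{r-1}$ for every $r$-chromatic $H$. That contradicts $\delta_{\mathrm{VC}}(K_s)\le\delta_{\mathrm{B}}(K_s)=\frac{2s-5}{2s-3}<\frac{s-2}{s-1}$, recorded in the survey (e.g.\ $\delta_{\mathrm{VC}}(K_3)\le\frac13$).

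The proposed repair does not help. By the observation above, $ab_T$ can be blocked in $G$ only if $a$ and $b_T$ lie in the same class of the colouring of $G$. You need this for all $a\in S\setminus T$, while the edges $ab_T$ with $a\in T$ must go across classes. This forces $S\cap C(b_T)=S\setminus T$ for every $T\subseteq S$, where $C(b_T)$ is the class containing $b_T$. That asks for $2^{|S|}$ distinct traces from at most $r-1$ classes, which is impossible once $2^{|S|}>r-1$.

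Passing to an $r$-critical subgraph $H'\subseteq H$ is also not legitimate. Maximality is with respect to $H$, and creating a copy of $H'$ does not create $H$. If $H$ has no critical edge (e.g.\ $K_{2,2,2}$), adding a single edge $e$ to an $(r-1)$-colourable graph can never create $H$, since $H-e$ would then be $(r-1)$-colourable.

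The survey only cites the Gishboliner--Huang--Liu result, so there is no in-text proof to compare with. Still, any correct proof must produce graphs of chromatic number at least $r$ whose maximality is checked directly. Already the case $r=3$ (positivity of $\delta_{\mathrm{VC}}$ for every $3$-chromatic $H$) is out of reach of your bipartite template.

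One natural shape is the following. Join a complete $(r-3)$-partite graph with suitably weighted parts to a non-bipartite gadget $Q$ with $\delta(Q)\ge c|V(Q)|$ for some fixed $c>0$ and unbounded VC-dimension, then verify maximality by hand. Optimising the weights gives minimum-degree ratio $\frac{(r-4)(1-c)+1}{(r-3)(1-c)+1}$, which exceeds $\frac{r-3}{r-2}$ precisely because $c>0$.
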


This is already a sharp qualitative contrast with the chromatic-threshold
classification. In the classical theory, the value $\frac{r-3}{r-2}$
is attained precisely by the \(r\)-near-acyclic graphs ( see \cite{ALLEN2013261} for definitions). Theorem~\ref{thm:vc-general-lower-bound}
says that no \(r\)-chromatic graph has VC threshold as small as this value.
Thus even when the chromatic threshold is minimal among \(r\)-chromatic graphs,
maximal \(H\)-free graphs still require a larger minimum-degree condition before
their neighbourhood complexity is forced to be bounded.

The VC threshold also interacts with the VC-bounded chromatic threshold through
the following simple inequality. For every graph \(H\),
\[
    \delta_\chi(H)
    \le
    \max\left\{
        \delta_{\mathrm{VC}}(H),
        \delta_\chi^{\mathrm{VC}}(H)
    \right\}.
\]
Indeed, given an \(H\)-free graph \(G\), one may extend it to a maximal
\(H\)-free graph \(G^+\) on the same vertex set. If the minimum degree is above
\(\delta_{\mathrm{VC}}(H)\), then \(G^+\) has bounded VC-dimension; applying
the VC-bounded chromatic threshold to \(G^+\) gives a bounded colouring, which
also restricts to \(G\).

Combining this inequality with Theorem~\ref{thm:vc-bounded-chromatic-general}
gives the following consequence. If \(\chi(H)=r\ge 3\) and
\[
    \delta_\chi(H)
    \le
    \frac{2r-5}{2r-3},
\]
then
$\delta_{\mathrm{VC}}(H)\ge \delta_\chi(H)$.
Indeed, in this range the VC-bounded chromatic threshold is at most
\((r-3)/(r-2)\), while Theorem~\ref{thm:vc-general-lower-bound} rules out
\(\delta_{\mathrm{VC}}(H)\le (r-3)/(r-2)\). In the intermediate case
\(\delta_\chi(H)=(2r-5)/(2r-3)\), the displayed inequality above forces
\(\delta_{\mathrm{VC}}(H)\ge \delta_\chi(H)\).

This suggests a natural problem asked in \cite{gishboliner_huang_liu_vc_thresholds}.

\begin{problem}\label{prob:vc-vs-chromatic}
Is it true that
$\delta_{\mathrm{VC}}(H)\ge \delta_\chi(H)$
for every graph \(H\) with \(\chi(H)\ge 3\)?
More strongly, is it true that
$\delta_{\mathrm{VC}}(H)\ge \delta_{\mathrm{hom}}(H)$
for every graph \(H\) with \(\chi(H)\ge 3\)?
\end{problem}

Gishboliner and the authors also determined the VC threshold for all complete
tripartite graphs.

\begin{theorem}[Gishboliner--Huang--Liu~\cite{gishboliner_huang_liu_vc_thresholds}]
\label{thm:vc-complete-tripartite}
Let \(1\le a\le b\le c\). Then
\[
    \delta_{\mathrm{VC}}(K_{a,b,c})
    =
    \begin{cases}
        \dfrac13, & \text{if } a=b=1,\\[6pt]
        \dfrac{b}{2b+1}, & \text{if } a=1,\ b\ge 2,\ \text{and } c\ge 2b-2,\\[6pt]
        \dfrac12, & \text{otherwise.}
    \end{cases}
\]
\end{theorem}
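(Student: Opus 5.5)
The plan is to prove the three cases of Theorem~\ref{thm:vc-complete-tripartite} by matching upper bounds (showing that maximal \(K_{a,b,c}\)-free graphs above the density have bounded VC-dimension) with lower-bound constructions (maximal \(K_{a,b,c}\)-free graphs of density just below the threshold that shatter arbitrarily large sets).

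\textbf{Case \(a=b=1\).} Here \(H=K_{1,1,c}\) is a book graph. For the lower bound \(\delta_{\mathrm{VC}}(K_{1,1,c})\ge 1/3\), I would take a Hajnal-type construction: a Kneser graph plus blown-up complete bipartite parts, of minimum degree close to \(n/3\). It is triangle-free, hence \(K_{1,1,c}\)-free. I would then extend it to a maximal \(K_{1,1,c}\)-free graph while keeping a shattered set of size \(d\). The point is that Kneser neighbourhoods realise all traces on a suitable set. Theorem~\ref{thm:vc-general-lower-bound} with \(r=3\) only gives \(>0\), so this construction is genuinely needed.

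For the upper bound I would mimic Lemma~\ref{lem:vc-reduction-triangle}. Maximality gives every non-edge \(uv\) a \(K_{1,1,c}\) on adding \(uv\), so \(u,v\) have common neighbours spanning a dense enough structure. Edges lie in fewer than \(c\) triangles, so the graph is nearly triangle-free in a removal sense. I would then run the {\L}uczak--Thomass\'e argument: a shattered set of size \(d\) together with \(\delta>(1/3+\varepsilon)n\) produces, by pigeonhole over neighbourhoods, an edge with many common neighbours.

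\textbf{Case \(a=1,\ b\ge2\).} The value \(b/(2b+1)\) matches the \(K_{1,s,t}\) homomorphism threshold of Theorem~\ref{thm:hom-K1st}, so I would reuse that construction. Take a blowup in which one part has size about \(n/(2b+1)\) and the link of each vertex contains no \(K_{b,c}\). Then plant a bipartite piece between a small part and a large part whose neighbourhoods shatter; this stays \(K_{1,b,c}\)-free because each link is sparse in one side. The condition \(c\ge 2b-2\) is where this planted bipartite graph can avoid \(K_{b,c}\) in links while staying maximal.

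For the upper bound above \(b/(2b+1)\), I would use supersaturation: a large shattered set gives many vertices whose neighbourhoods realise distinct traces. Then the degree condition \(\delta>(b/(2b+1)+\varepsilon)n\), by counting, forces a vertex whose link contains \(K_{b,c}\) via Kővári--Sós--Turán inside the link.

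\textbf{Remaining cases, value \(1/2\).} For the upper bound I would show that above \(n/2\) a maximal \(H\)-free graph is close to complete bipartite, or at least that any two vertices share linearly many neighbours. From this, VC-dimension is bounded by a direct pigeonhole over neighbourhoods. For the lower bound, when \(a\ge2\) or \(c<2b-2\), I would modify a complete bipartite graph \(K_{n/2,n/2}\). Add a sparse random-like or design-like graph inside one side so that every link avoids \(K_{a,b}\)-type configurations, and check maximality. Random edges inside a side create arbitrary traces, which gives large VC-dimension.

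The main obstacle I expect is the maximality requirement in all the lower bounds. Each construction must be saturated, not merely \(H\)-free, without destroying the shattered set. Saturation adds edges that might collapse the neighbourhood traces. My first approach would be to saturate only among a structured part and argue that the shattering vertices' neighbourhoods are already frozen.
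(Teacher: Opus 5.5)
The survey only cites this theorem and gives no proof, so I am assessing your plan on its own. It has genuine gaps in exactly the two cases that carry the content of the theorem.

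\textbf{The value $1/2$.} Your lower-bound construction cannot produce large VC-dimension. Suppose $G$ is complete bipartite between $A$ and $B$ (with $|A|,|B|\ge c$) plus graphs $R_A$, $R_B$ inside the parts. Then $K_{a,b,c}$-freeness forces $R_A$ and $R_B$ to be $K_{a,b}$-free, since otherwise $c$ vertices from the other side complete a copy. A shattered set $S$ with $|S|\ge 2$ cannot meet both sides, because then no vertex realises the empty trace. Now take $S\subseteq A$ of size $d$ and fix an $a$-set $T\subseteq S$. At least $2^{d-a}-1$ of the traces containing $T$ are realised by vertices of $A\setminus T$, and these are $R_A$-adjacent to all of $T$. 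This gives $K_{a,2^{d-a}-1}\subseteq R_A$. Hence $\mathrm{VC}(G)<a+\log_2(b+1)$ however random $R_A$ is, and saturating by adding edges inside the parts does not help. By stability, a lower-bound example at density $1/2-\varepsilon$ is close to $K_{n/2,n/2}$, so the shattering must be encoded in \emph{missing cross edges}. The real task, and the place where $a\ge 2$ or $c<2b-2$ must be used, is to put edges inside the parts so that every missing cross edge completes a $K_{a,b,c}$ while the graph stays $K_{a,b,c}$-free. The matching upper bound is vacuous: by Erd\H{o}s--Stone, no large $K_{a,b,c}$-free graph has $\delta(G)\ge(1/2+\varepsilon)n$.

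\textbf{The case $a=1$, $b\ge 2$.} Both directions are misdiagnosed.
\begin{itemize}
\item Theorem~\ref{thm:hom-K1st} gives only $\delta_{\mathrm{hom}}(K_{1,b,c})\ge b/(1+b+c)$. This is strictly below $b/(2b+1)$ whenever $c>b$, which covers this whole case except $K_{1,2,2}$.
\item More fundamentally, a graph with no bounded $H$-free homomorphic image is not a maximal $H$-free graph of large VC-dimension. Whether $\delta_{\mathrm{VC}}\ge\delta_{\mathrm{hom}}$ holds is open (Problem~\ref{prob:vc-vs-chromatic}), so nothing transfers.
\item You put $c\ge 2b-2$ on the wrong side. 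Since $\delta_{\mathrm{VC}}(K_{1,b,c})=1/2$ for $b\le c<2b-2$, the lower bound $b/(2b+1)$ must hold for every $c\ge b$. The hypothesis $c\ge 2b-2$ is needed only for the upper bound.
\item Your upper-bound sketch uses neither this hypothesis nor maximality, so it cannot be correct. It would equally give $\delta_{\mathrm{VC}}(K_{1,3,3})\le 3/7$, contradicting the true value $1/2$.
\item It is also refuted by the random bipartite graph $G(n/2,n/2,p)$ with $p<1$ close to $1$. This graph is triangle-free, has $\delta(G)>(b/(2b+1)+\varepsilon)n$, and has unbounded VC-dimension. It fails only maximality, which your counting never invokes.
\end{itemize}
A valid proof must use that every non-edge completes a $K_{1,b,c}$, and must show how $c\ge 2b-2$ makes this incompatible with a large shattered set.

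\textbf{The case $a=b=1$.} No construction is needed for the lower bound. We have $\delta_\chi(K_{1,1,c})=1/3$ and $\delta_\chi^{\mathrm{VC}}(K_{1,1,c})=0$ by Theorem~\ref{thm:vc-bounded-chromatic-general}, since the star $K_{1,c}$ lies in $\mathcal M$. The lower bound then follows from $\delta_\chi\le\max\{\delta_{\mathrm{VC}},\delta_\chi^{\mathrm{VC}}\}$. For the upper bound, drop the removal lemma, because bounded VC-dimension is not stable under deleting $o(n^2)$ edges. Use instead that $|N(u)\cap N(w)|\le c-1$ for every edge $uw$. With this, Claim~\ref{claim:triangle-large-codegree} survives with an additive loss of $2(c-1)$.
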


This theorem shows that, even within 3-chromatic graphs, the VC threshold has a
richer behaviour than the chromatic threshold.
To make this spectral viewpoint explicit, we similarly define
\[
    \Delta_{\mathrm{VC}}
    :=
    \{\delta_{\mathrm{VC}}(H):\chi(H)\ge 3\},
\]
and, for each integer \(r\ge 3\),
\[
    \Delta_{\mathrm{VC}}^{(r)}
    :=
    \{\delta_{\mathrm{VC}}(H):\chi(H)=r\}.
\]
Thus \(\Delta_{\mathrm{VC}}\) is the non-bipartite spectrum of the VC threshold,
and \(\Delta_{\mathrm{VC}}^{(r)}\) is its restriction to \(r\)-chromatic
forbidden graphs.
We say that a value \(\alpha\) is an \emph{accumulation point} of
\(\Delta_{\mathrm{VC}}^{(r)}\) if there exists a sequence of \(r\)-chromatic
graphs \(\{H_n\}_{n\ge 1}\) such that the values
\(\delta_{\mathrm{VC}}(H_n)\) are pairwise distinct and
\[
    \lim_{n\to\infty}\delta_{\mathrm{VC}}(H_n)=\alpha.
\]
Theorem~\ref{thm:vc-complete-tripartite} already implies that
\(\Delta_{\mathrm{VC}}^{(3)}\) has an accumulation point. Indeed, the distinct
values
\[
    \delta_{\mathrm{VC}}(K_{1,b,2b-2})
    =
    \frac{b}{2b+1}
    \qquad (b\ge 2)
\]
converge to \(1/2\) as \(b\to\infty\). Hence \(1/2\) is an accumulation point of
\(\Delta_{\mathrm{VC}}^{(3)}\).

Moreover, Theorem~\ref{thm:vc-complete-tripartite} also implies that $\delta_{\mathrm{VC}}$ is \emph{not monotone} under taking induced subgraphs.
To see this, by Theorem~\ref{thm:vc-complete-tripartite}, we have 
\begin{align*}
    K_{1,3,3}\subseteq K_{1,3,10} \quad\text{where}\quad
     \delta_{\mathrm{VC}}(K_{1,3,3})= \frac{1}{2}> \frac{3}{7} = \delta_{\mathrm{VC}}(K_{1,3,10}).
\end{align*}

These phenomena show that the VC threshold is substantially less rigid than the
chromatic threshold. For chromatic thresholds, the fixed-colour spectra are
finite and the parameter is monotone under taking subgraphs. In contrast,
\(\Delta_{\mathrm{VC}}^{(3)}\) already has an accumulation point, and
\(\delta_{\mathrm{VC}}\) fails monotonicity even under taking induced subgraphs.
Thus bounded neighbourhood complexity is governed by finer structural features
of the forbidden graph than those detected by the chromatic threshold alone.
Determining \(\delta_{\mathrm{VC}}(H)\) in general therefore seems to require
new ideas beyond the monotonicity-based framework of the classical chromatic
threshold classification.

\begin{problem}\label{prob:vc-more-values}
Determine $\Delta_{\mathrm{VC}}^{(3)}$.
\end{problem}



\section{Blowup thresholds}\label{sec:blowup-threshold}
In this section, we introduce a strengthening of both the homomorphism threshold
and the VC threshold.

Recall first that a homomorphism to a bounded graph gives only a coarse
structural description: it says that \(G\) is a subgraph of a blowup of the
image, but the pairs of fibres corresponding to edges of the template may have
been arbitrarily thinned. If \(G\) is maximal \(H\)-free, however, it is natural
to hope that such thinning is impossible, so that the relevant pairs of fibres
must in fact be complete. This is exactly what happens for cliques. In the
clique case, the known proofs of the homomorphism threshold first pass to a
maximal \(K_r\)-free graph; after maximal completion, the bounded homomorphic
description upgrades to an actual blowup of a bounded \(K_r\)-free graph
\cite{oberkampf_structure_2020}. This raises the natural question of whether a
similar strengthening holds for more general forbidden graphs.

There is a second, closely related motivation. Already in {\L}uczak's regularity
proof for the triangle case, maximality leads to a bounded partition which is
almost homogeneous, in the sense that most pairs of parts have density close to
either \(0\) or \(1\). Thus one may ask more generally when an almost homogeneous
bounded partition can be promoted to a genuinely homogeneous one, equivalently
to a blowup of a bounded graph. Such a strengthening would be particularly
useful for graphs of bounded VC-dimension, where bounded-VC regularity lemmas
already provide bounded almost homogeneous partitions
\cite{AlonFischerNewman,LovaszSzegedy}. From this viewpoint, the main problem
is to understand when approximate homogeneous structure can be made exact.

This motivates the following strengthening, recently introduced
in~\cite{huang_interpolating_2025}. For a graph \(F\), a blowup of \(F\) is
obtained by replacing every vertex of \(F\) by an independent set, and every
edge of \(F\) by a complete bipartite graph between the corresponding parts.
We write \(F[\cdot]\) for an arbitrary blowup of \(F\)\footnote{Here we allow
the independent set replacing a vertex to be empty.}. The \emph{blowup
threshold} of \(H\) is defined as
\begin{align*}
    \delta_{\textup{B}}(H):=\inf\big\{&\alpha\ge 0:\exists~H\textup{-free graph }F=F(\alpha,H)\  \textup{s.t.~}
    \forall~\textup{maximal\ }H\textup{-free}~G,\ \\&\delta(G)\ge \alpha |V(G)|
    \Rightarrow G=F[\cdot]\big\}.
\end{align*}

The first fundamental case is again given by cliques. In this case the blowup
threshold coincides with the chromatic and homomorphism thresholds.

\begin{theorem}[Oberkampf--Schacht~\cite{oberkampf_structure_2020}]
\label{thm:blowup-cliques}
For every \(s\ge 3\),
\[
    \delta_{\mathrm{B}}(K_s)
    =
    \delta_{\mathrm{hom}}(K_s)
    =
    \delta_{\chi}(K_s)
    =
    \frac{2s-5}{2s-3}.
\]
\end{theorem}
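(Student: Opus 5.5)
Given Theorem~\ref{thm:hom-cliques}, it suffices to prove $\delta_{\mathrm{B}}(K_s)\le \frac{2s-5}{2s-3}$ and $\delta_{\mathrm{hom}}(K_s)\le\delta_{\mathrm B}(K_s)$. For the second, suppose $\alpha$ is admissible for the blowup threshold with template $F$. Any $K_s$-free $G$ with $\delta(G)\ge\alpha n$ extends to a maximal $K_s$-free $G'$ on the same vertex set, with no smaller minimum degree. Then $G'=F[\cdot]$, so $G\to G'\to F$ with $F$ bounded and $K_s$-free. Hence $\delta_{\mathrm{hom}}\le\delta_{\mathrm B}$, and the chain $\delta_\chi\le\delta_{\mathrm{hom}}\le\delta_{\mathrm B}$ reduces everything to the upper bound on $\delta_{\mathrm B}(K_s)$.

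For the upper bound I would copy the triangle proof of Theorem~\ref{thm:hom-triangle}, replacing common neighbours by common $(s-2)$-cliques. Fix $\varepsilon>0$ and a maximal $K_s$-free $G$ with $\delta(G)\ge(\frac{2s-5}{2s-3}+\varepsilon)n$. Maximality means every non-edge $uv$ has a $K_{s-2}$ inside $N(u)\cap N(v)$.

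\textbf{Step 1 (robust codegree).} Show that every non-edge $uv$ has many such cliques, say at least $\eta n^{s-2}$ with $\eta=\eta(\varepsilon,s)>0$. I would take one clique $Q$ in $N(u)\cap N(v)$ and count degrees from $\{u,v\}\cup Q$, as in Claim~\ref{claim:triangle-large-codegree}. $K_s$-freeness caps how many of these $s$ vertices any vertex can see, and the degree bound then forces a linear-size common neighbourhood $N(u)\cap N(v)\cap N(Q')$ for sub-cliques $Q'$. Iterating gives a positive density of $(s-2)$-cliques.

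\textbf{Step 2 (bounded VC-dimension).} Prove the analogue of Lemma~\ref{lem:vc-reduction-triangle}: maximal $K_s$-free graphs above $\frac{2s-5}{2s-3}+\varepsilon$ have $\mathrm{VC}(G)\le d(\varepsilon,s)$. This is the main obstacle. By Theorem~\ref{thm:vc-bounded-cliques} the threshold genuinely differs from the VC-bounded chromatic one, so the argument must use the exact value. I would argue by contradiction. A large shattered set $S$ supplies vertices realising every trace on $S$. Combined with Step 1 and supersaturation/dependent random choice, this should embed a Hajnal/Goddard--Lyle type configuration, namely a Borsuk--Hajnal-like structure, which is exactly what forces $\delta\le\frac{2s-5}{2s-3}$ in the extremal constructions. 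Turning this into a contradiction is the delicate part. Failing a clean route, I would fall back on the regularity approach of Goddard--Lyle/Oberkampf--Schacht to obtain an almost-homogeneous bounded partition directly.

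\textbf{Step 3 (exact blowup).} Apply Lemma~\ref{lem:haussler-packing-graph} with $s_0=\eta' n$ small, obtaining parts whose vertices have neighbourhood symmetric differences at most $2\eta' n$. Each part is independent: two vertices in one part have nearly equal neighbourhoods of density above $\frac{2s-5}{2s-3}$, so they share an $(s-2)$-clique in their common neighbourhood, and an edge between them would complete a $K_s$. (This uses the degree bound to find a $K_{s-2}$ inside a set of size more than $\frac{s-3}{s-2}n$ plus slack, by Step 1's counting.)

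For two parts $V_i,V_j$, suppose $u\in V_i$, $w\in V_j$ is a non-edge while $u'w'$ is an edge. Step 1 gives $\eta n^{s-2}$ cliques in $N(u)\cap N(w)$. Passing to $u',w'$ destroys at most $O(\eta' n)\cdot n^{s-3}$ of them, so for $\eta'\ll\eta$ some clique survives and forms a $K_s$ with $u'w'$, a contradiction. Hence every pair of parts is complete or empty. The quotient $F$ is then $K_s$-free, since a $K_s$ in $F$ lifts to one in $G$, and its order is bounded in terms of $\varepsilon$ and $s$. So $G=F[\cdot]$, which proves the upper bound.
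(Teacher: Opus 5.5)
\textbf{Step 2 is a genuine gap.} Your reduction to the single inequality $\delta_{\mathrm B}(K_s)\le\frac{2s-5}{2s-3}$ (via $\delta_\chi\le\delta_{\mathrm{hom}}\le\delta_{\mathrm B}$ and maximal completion) is correct. Step~3 faithfully transcribes the triangle argument of Section~\ref{subsubsec:triangle-vc-packing}, and independence of the parts is fine: a common neighbourhood of size about $\delta(G)>\frac{s-3}{s-2}n$ contains a $K_{s-2}$ greedily.

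The claim that maximal $K_s$-free graphs with $\delta(G)\ge(\frac{2s-5}{2s-3}+\varepsilon)n$ have bounded VC-dimension is true. But it is itself a consequence of the theorem you are proving, since a blowup of an $m$-vertex graph has VC-dimension at most $\log_2 m$. It carries essentially all of the difficulty, and you give no argument for it. There is no evident mechanism by which a large shattered set plus dependent random choice yields a configuration incompatible with $\delta(G)>\frac{2s-5}{2s-3}n$. The Hajnal/Kneser-type graphs show the threshold is sharp; they are not a forbidden substructure you can embed and contradict. Falling back on the regularity proofs just cites the result.

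\textbf{Step 1 is also incomplete as written.} From one clique $Q=\{q_1,\dots,q_{s-2}\}\subseteq N(u)\cap N(v)$, degree counting gives at least $(\frac{2s-6}{2s-3}+s\varepsilon)n$ vertices adjacent to $u$, $v$ and to all but one vertex of $Q$. However, ``iterating'' could keep replacing the same position and produce only linearly many cliques. The repair is as follows. For each $i$, the vertices whose unique non-neighbour in $\{u,v\}\cup Q$ is $q_i$, together with $q_i$, form an independent set: an edge $xy$ there gives a $K_s$ on $\{u,x,y\}\cup(Q\setminus\{q_i\})$. So each such set has at most $n-\delta(G)\le(\frac{2}{2s-3}-\varepsilon)n$ elements. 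Hence every position admits at least $(2s-3)\varepsilon n-1$ replacements, which yields $\Omega_s(\varepsilon^{s-2}n^{s-2})$ cliques.

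\textbf{How to close the gap.} Reduce to triangles instead of seeking a $K_s$-analogue of Lemma~\ref{lem:vc-reduction-triangle}. This is how Goddard--Lyle deduce the clique case from {\L}uczak's theorem. Their structural lemma says that every $K_s$-free $G$ with $\delta(G)>\frac{2s-5}{2s-3}n$ admits a partition $V(G)=I_1\cup\dots\cup I_{s-3}\cup U$ with each $I_j$ independent and $G[U]$ triangle-free, so $G\xrightarrow{\mathrm{hom}}K_{s-3}\vee G[U]$, a $K_s$-free graph.

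Now suppose $G$ is maximal. Any missing edge between distinct $I_j$, between some $I_j$ and $U$, or inside $U$ keeping $G[U]$ triangle-free could be added while $G$ still maps to a $K_s$-free graph. Hence $G$ is the join of a complete $(s-3)$-partite graph with a maximal triangle-free graph $T=G[U]$. Since $|I_j|\le n-\delta(G)$, you get $|U|\ge n-(s-3)(n-\delta(G))$, and then $\delta(T)\ge|U|-(n-\delta(G))\ge(\frac13+\varepsilon)|U|$ for small $\varepsilon$.

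The paper's triangle argument then shows that $T$ is a blowup of a bounded triangle-free graph $F_T$. So $G$ is a blowup of the $K_s$-free graph $K_{s-3}\vee F_T$, and Steps~1--2 become unnecessary. To match the definition of $\delta_{\mathrm B}$, which requires one template for all $G$, take the disjoint union of all $K_s$-free graphs on at most $m$ vertices; this is allowed because blowups may have empty parts.
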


The clique case is therefore optimistic: the exact blowup conclusion costs no
more minimum degree than bounded colourability or a bounded homomorphic image.

The next fundamental family is given by odd cycles. Here the behaviour is very
different from the chromatic threshold. Recall that for every \(k\ge 3\),
$\delta_\chi(C_{2k-1})=0$.
For the homomorphism threshold, the exact value of
\(\delta_{\mathrm{hom}}(C_{2k-1})\) is still unknown. The blowup threshold,
however, can be determined exactly.

\begin{theorem}[Huang--Liu--Rong--Xu~\cite{huang_interpolating_2025}]
\label{thm:blowup-odd-cycles}
For every \(k\ge 3\),
\[
    \delta_{\mathrm{B}}(C_{2k-1})
    =
    \frac{1}{2k-1}.
\]
\end{theorem}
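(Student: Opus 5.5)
We prove the two inequalities separately, following the scheme of the proof of \Cref{thm:hom-triangle}.

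\textbf{Upper bound.} Fix \(\varepsilon>0\) and let \(G\) be maximal \(C_{2k-1}\)-free with \(\delta(G)\ge(\frac1{2k-1}+\varepsilon)n\).

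First, by \Cref{thm:vc-odd-cycles} (the upper bound \(\delta_{\mathrm{VC}}(C_{2k-1})\le\frac1{2k-1}\)), \(\mathrm{VC}(G)\le d(\varepsilon)\). Applying \Cref{lem:haussler-packing-graph} with \(s=\eta n\) for a small \(\eta=\eta(\varepsilon,k)\) gives a partition \(V_1\cup\dots\cup V_m\) with \(m=O_\varepsilon(1)\) and \(|N(u)\triangle N(v)|\le 2\eta n\) inside each part.

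The structural heart, the analogue of Claim~\ref{claim:triangle-large-codegree}, is a robust statement about non-edges. Maximality says every non-edge \(uv\) is the end-pair of a path of length \(2k-2\). We want to upgrade this to: every non-edge \(uv\) is joined by \(\Omega_\varepsilon(n^{2k-3})\) such paths. Better still, there should be sets \(A\ni u\)-side and \(B\ni v\)-side of linear size, each reachable through robust path systems, such that any \(u',v'\) with neighbourhoods close to those of \(u,v\) are also joined by a path of length \(2k-2\).

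To prove this, I would take one \(u\)–\(v\) path \(P\) and attempt to "thicken" each internal vertex using the degree condition. Here the bound \(1/(2k-1)\) enters: if \(2k-1\) vertices along a closed walk each have degree greater than \(n/(2k-1)\), some two neighbourhoods must overlap substantially. This is the same counting as in Claim~\ref{claim:triangle-large-codegree}, where \(d(u)+d(v)+d(w)>n\). Substantial overlaps either produce many alternative paths or produce a \(C_{2k-1}\) directly.

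Given this robustness, the same two arguments as in the triangle case go through.
\begin{itemize}
\item Each \(V_i\) is independent. If \(u,v\in V_i\) are adjacent, their near-identical neighbourhoods together with the edge \(uv\) can be used to close a \(C_{2k-1}\). Concretely, find an odd cycle through \(u\) of suitable length using the degree condition, then reroute through \(v\). This needs care, since short odd cycles are allowed in \(G\).
\item Each pair \(G[V_i,V_j]\) is complete or empty. If \(uw\) is a non-edge and \(u'w'\) an edge with \(u,u'\in V_i\) and \(w,w'\in V_j\), then the robust \(u\)–\(w\) paths of length \(2k-2\) transfer to \(u'\)–\(w'\) paths, because neighbourhoods differ by at most \(2\eta n\). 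Adding the edge \(u'w'\) closes a \(C_{2k-1}\), a contradiction.
\end{itemize}
Hence \(G\) is a blowup of its quotient \(F\). Since \(G\) contains a blowup of any subgraph of \(F\) with nonempty parts, \(F\) is \(C_{2k-1}\)-free.

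The main obstacle is the transfer step in the second bullet, which must show that paths of length exactly \(2k-2\) survive perturbing both endpoints. The paths must avoid the symmetric differences and must remain paths, with no repeated vertices. My first attempt would fix the \(2k-3\) internal vertices greedily from linear-sized candidate sets: \(\delta\) bounds the size of each candidate set from below, while \(\eta\ll\varepsilon\) makes the perturbation negligible.

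\textbf{Lower bound.} It suffices to exhibit, for each \(\varepsilon>0\), maximal \(C_{2k-1}\)-free graphs with \(\delta\ge(\frac1{2k-1}-\varepsilon)n\) that are not blowups of any graph \(F\) of bounded order. Taking the graphs from the lower bound \(\delta_{\mathrm{VC}}(C_{2k-1})\ge\frac1{2k-1}\) in \Cref{thm:vc-odd-cycles}, which have unbounded VC-dimension, works. A blowup of a graph on \(m\) vertices has at most \(m\) distinct neighbourhoods, so its VC-dimension is at most \(\log_2 m\). Therefore unbounded VC-dimension rules out being a blowup of any bounded \(F\).
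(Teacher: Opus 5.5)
Your lower bound is fine. It is the observation $\delta_{\mathrm B}(H)\ge\delta_{\mathrm{VC}}(H)$, since a blowup of an $m$-vertex graph has at most $m$ distinct neighbourhoods and hence VC-dimension at most $\log_2 m$, combined with the lower bound in Theorem~\ref{thm:vc-odd-cycles}.

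The upper bound has a genuine gap. The robustness statement you call the structural heart is false, and so is the conclusion that the packing partition is a blowup partition. For $k\ge 4$, take three disjoint copies of $K_{m,m}$ with sides $A_i,B_i$ ($i=1,2,3$), add a triangle $x_1x_2x_3$, and join each $x_i$ completely to $A_i$.
\begin{itemize}
\item Every block is bipartite or is the triangle, so the graph is $C_{2k-1}$-free.
\item Every non-edge is the end-pair of a path of length $2k-2$; for $k=4$, for example, $b_1a_1x_1x_3x_2a_2b_2$ closes $b_1b_2$. So the graph is maximal $C_{2k-1}$-free.
\item $\delta=m\approx n/6>n/(2k-1)$, so the graph lies above the threshold.
\end{itemize}
Yet the non-edge $b_1b_2$ lies on only $\Theta(n^{2k-6})$ such paths, not $\Omega(n^{2k-3})$, and all of them pass through $x_1,x_3,x_2$.

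Worse, $|N(x_1)\triangle N(b)|=2$ for every $b\in B_1$. So Lemma~\ref{lem:haussler-packing-graph} may legitimately return the classes $B_1\cup\{x_1\}$ and $B_2\cup\{x_2\}$. The pair between these classes contains the edge $x_1x_2$ and the non-edges $b_1b_2$, yet no $C_{2k-1}$ arises. Your transfer step fails because every $b_1$--$b_2$ path of length $2k-2$ runs through $x_1$ itself, so it cannot be rerouted to start at $x_1$. The graph is still a blowup of a $9$-vertex $C_{2k-1}$-free graph, so the theorem is not contradicted; only your route is.

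The underlying issue is one of scale. The packing lemma distinguishes neighbourhoods only up to $\eta n$, whereas exact blowup structure must separate neighbourhoods that differ in $O(1)$ vertices. In the triangle case, Claim~\ref{claim:triangle-large-codegree} supplies the dichotomy ``identical or linearly different neighbourhoods''. For a single $C_{2k-1}$ no such dichotomy exists. Shorter odd cycles are allowed, and they force some twin classes of bounded size, because a blowup of a shorter odd cycle with large parts contains $C_{2k-1}$. These small classes act as hubs that are invisible at scale $\eta n$.

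What is missing is a separate exactification step. It would have to isolate such exceptional vertices and prove that the remaining vertices in each class are exact twins. You also still need an actual proof of your first bullet, that each class is independent, which you left open.
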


Thus odd cycles already show that the exact blowup conclusion is substantially
stronger than bounded colourability:
\[
    \delta_\chi(C_{2k-1})
    =
    0
    <
    \frac{1}{2k-1}
    =
    \delta_{\mathrm{B}}(C_{2k-1}).
\]
At the same time, the value agrees with the best known upper bound for the
homomorphism threshold of \(C_{2k-1}\). In this sense, the blowup threshold gives
an exact structural answer in a regime where the corresponding homomorphism
threshold remains difficult to determine.

The authors together with Rong~\cite{huang_liu_rong_blowup_thresholds} further showed that the blowup threshold differs
from the chromatic threshold in several more fundamental ways. The first
difference is spectral: the blowup threshold realizes new exact values already
among 3-chromatic forbidden graphs.

\begin{theorem}[Huang--Liu--Rong~\cite{huang_liu_rong_blowup_thresholds}]
\label{thm:blowup-new-values}
For every integer \(k\ge 2\), there exists a graph \(H_k\) with
\(\chi(H_k)=3\) such that
\[
    \delta_{\mathrm{B}}(H_k)=\frac{1}{2k}.
\]
\end{theorem}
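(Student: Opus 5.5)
The plan is to take \(H_k\) to be a 3-chromatic graph whose odd cycles all have length at least \(2k+1\) but which also carries a small rigid gadget, for instance an odd cycle \(C_{2k+1}\) with a pendant structure or two copies of \(C_{2k+1}\) glued along a path. The aim is that maximal \(H_k\)-free graphs behave, above density \(1/(2k)\), like maximal \(\mathcal C_{2k-1}\)-type graphs. The value \(1/(2k)\) sits strictly between the odd-cycle values \(1/(2k+1)\) and \(1/(2k-1)\) of Theorem~\ref{thm:blowup-odd-cycles} and Theorem~\ref{thm:hom-short-odd-cycles}. So I expect the right \(H_k\) to be one where the extremal obstruction is a blowup of an even-length structure, such as a Möbius-ladder or Andrásfai-type graph on \(2k\) ``classes'' with degree ratio \(1/(2k)\). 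Concretely, I would choose \(H_k\) so that this natural \(2k\)-class construction is \(H_k\)-free but not a bounded blowup. The proof then splits into a lower bound and an upper bound.

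\textbf{Lower bound \(\delta_{\mathrm B}(H_k)\ge 1/(2k)\).} For each \(\varepsilon>0\) and each bounded \(F\), I would build a maximal \(H_k\)-free graph \(G\) with \(\delta(G)\ge(1/(2k)-\varepsilon)|V(G)|\) that is not a blowup of any graph on at most \(|F|\) vertices. The natural template has \(2k\) parts of size about \(n/(2k)\) arranged cyclically, with one pair of parts joined by a ``twisted'' bipartite graph, for example a half-graph or a Borsuk-type geometric graph. This pair has unboundedly many distinct neighbourhood types but creates no copy of \(H_k\). After maximal completion, the twisted pair survives, because any added edge would close the gadget. Unboundedly many neighbourhood types then forbid being a blowup of a bounded graph. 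In effect, this is a lower bound on \(\delta_{\mathrm{VC}}\) adapted to \(H_k\).

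\textbf{Upper bound \(\delta_{\mathrm B}(H_k)\le 1/(2k)\).} I would follow the template of the proof of Theorem~\ref{thm:hom-triangle}, in three steps.
\begin{enumerate}
\item Show that maximal \(H_k\)-free \(G\) with \(\delta(G)\ge(1/(2k)+\varepsilon)n\) has bounded VC-dimension, using the odd-cycle argument behind Theorem~\ref{thm:vc-odd-cycles}.
\item Establish a robust-codegree claim analogous to Claim~\ref{claim:triangle-large-codegree}: every non-edge \(uv\) lies in \(\Omega(\varepsilon n^{\ell})\) copies of some fixed path \(P\) whose closure by \(uv\) creates \(H_k\). The key observation is that maximality gives one copy of \(H_k-uv\). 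Supersaturation from the minimum degree, for instance by walking around a shortest odd cycle and using that neighbourhoods of consecutive vertices in a \(2k\)-cycle-like structure must overlap once degrees exceed \(n/(2k)\), then yields many copies.
\item Apply Lemma~\ref{lem:haussler-packing-graph} with \(s\) a small multiple of \(\varepsilon n\). Vertices within one part then share almost all their neighbours. Robust codegree then forces each pair of parts to be complete or empty, as in the triangle proof. Each part is independent, since two adjacent twins would have a common neighbourhood closing a short odd structure. The quotient is \(H_k\)-free because any copy lifts.
\end{enumerate}

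\textbf{Main obstacle.} The hard step is the robust counting claim in the upper bound. Unlike the triangle case, the counting cannot be a one-line degree sum. One must show that exceeding \(1/(2k)\) (rather than \(1/(2k-1)\)) suffices, which requires exploiting the specific gadget in \(H_k\) so that the relevant closing walk has length \(2k\) rather than \(2k-1\). I would first try an Andrásfai-type argument: show that \(G\) has odd girth exactly \(2k+1\) locally, and count via the \(2k\) disjoint neighbourhoods along a shortest odd cycle, whose total size forces overlap exactly when \(\delta>n/(2k)\). The choice of \(H_k\) would be tuned so that this computation closes.
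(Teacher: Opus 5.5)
\textbf{Main gap.} The statement is existential, so its whole content is an explicit choice of $H_k$ together with matching lower and upper bounds. Your proposal never fixes $H_k$: you leave it to be ``tuned so that this computation closes''. So nothing has yet been proved, and nothing in the plan explains why an even denominator $1/(2k)$ should arise. The survey only cites this result, so I am judging your plan on its own terms.

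The lower-bound construction you do describe fails. Take it as stated: $2k$ parts arranged cyclically, i.e.\ a blowup of the even cycle $C_{2k}$, with one of the cyclic pairs twisted. The whole graph is then bipartite. Every missing edge between the two twisted parts joins the two colour classes, so it can be added without creating any odd cycle, let alone a copy of the $3$-chromatic $H_k$. Hence the twisted pair does \emph{not} survive maximal completion, and your claim that ``any added edge would close the gadget'' is false. Since $\delta_{\mathrm B}$ quantifies over maximal $H_k$-free graphs, you need a non-bipartite construction for which you actually verify one of two things: that it is maximal, or that every maximal $H_k$-free supergraph keeps unboundedly many neighbourhood types. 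Verifying $H_k$-freeness of a Borsuk-type graph likewise depends on the unspecified $H_k$.

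\textbf{Upper bound.} None of your three steps is established.

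Your first step cannot be imported from Theorem~\ref{thm:vc-odd-cycles}. That result concerns maximal $C_{2k\pm1}$-free graphs with thresholds $1/(2k\pm1)$, whereas an $H_k$-free graph need not even be $C_{2k+1}$-free. Moreover, a blowup of a bounded graph has bounded VC-dimension, so $\delta_{\mathrm{VC}}(H_k)\le\delta_{\mathrm B}(H_k)$. Bounded VC-dimension above $1/(2k)$ is therefore essentially half of the theorem for the specific $H_k$.

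Your second step is the real content and is only asserted. What is needed is that every non-edge $uv$ is the missing edge of $\Omega(n^{v(H_k)-2})$ copies of $H_k$ minus an edge; ``paths whose closure creates $H_k$'' only makes sense when $H_k$ is a cycle. Maximality gives just one such copy. Both the degree-sum argument of Claim~\ref{claim:triangle-large-codegree} and your Andr\'asfai-type heuristic rely on neighbourhoods being disjoint because $G$ avoids short odd cycles. But if $H_k$ has odd girth at least $2k+1$, as you propose, an $H_k$-free $G$ may contain triangles and other short odd cycles. There is then no reason for $G$ to have ``odd girth exactly $2k+1$ locally''.

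The same issue breaks your third step. Two adjacent near-twins with a common neighbour merely form a triangle, which is allowed in $G$. Independence of the packing parts is needed for the quotient map to be a homomorphism and for $G$ to be a blowup. Establishing it requires a genuinely different argument that exploits the structure of $H_k$.

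The overall architecture (bounded VC-dimension, robust completion counts, Haussler packing, complete-or-empty pairs) is the natural one from the triangle proof. But the graph $H_k$, a valid lower-bound construction, and each substantive lemma are missing.
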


For \(k\ge 2\), the values \(1/(2k)\) do not occur as chromatic thresholds.
Moreover, no corresponding examples are currently known for homomorphism
thresholds. Thus the blowup threshold produces genuinely new exact values in
the 3-chromatic regime.

A second difference is that the blowup threshold never vanishes for
non-bipartite forbidden graphs.

\begin{theorem}[Huang--Liu--Rong~\cite{huang_liu_rong_blowup_thresholds}]
\label{thm:blowup-positive}
For every graph \(H\) with \(\chi(H)\ge 3\),
\[
    \delta_{\mathrm{B}}(H)>0.
\]
\end{theorem}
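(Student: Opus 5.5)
The approach is to exploit the rigidity that a blowup imposes on neighbourhoods. If $G=F[\cdot]$ with $|V(F)|=m$, then two vertices in the same part have identical neighbourhoods, so $G$ has at most $m$ distinct neighbourhoods. Hence, to prove $\delta_{\mathrm B}(H)>0$ it suffices to find some $\alpha_0=\alpha_0(H)>0$ with the following property: for every $M$ there is a maximal $H$-free graph $G$ with $\delta(G)\ge\alpha_0|V(G)|$ and more than $M$ distinct neighbourhoods. No bounded $F$ can then work at any $\alpha<\alpha_0$, so $\delta_{\mathrm B}(H)\ge\alpha_0$.

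For the construction I would use a circular "Borsuk-type" graph. Let $h=|V(H)|$ and fix $\beta\ll 1/h$. Let $G_0$ be the graph on $\mathbb Z_n$ in which $x\sim y$ whenever the cyclic distance between $x$ and $y$ lies in $[n/2-\beta n,\,n/2]$.
\begin{itemize}
\item Every odd cycle of $G_0$ has length $\Omega(1/\beta)>h$. Since $\chi(H)\ge 3$, $H$ contains an odd cycle of length at most $h$, so $G_0$ is $H$-free.
\item $\delta(G_0)\ge 2\beta n$.
\item $G_0$ has $n$ pairwise distinct neighbourhoods; indeed, vertices at cyclic distance $\ge\gamma n$ have neighbourhoods whose symmetric difference is linear.
\end{itemize}
Next, extend $G_0$ to a maximal $H$-free graph $G\supseteq G_0$ on the same vertex set. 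The minimum degree does not drop, so $\delta(G)\ge 2\beta n=:\alpha_0 n$.

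It remains to show that the extension cannot merge the neighbourhoods into boundedly many classes. Suppose $G$ had at most $M$ twin classes. Partition the circle into $K\gg M$ arcs of length $n/K$. By pigeonhole, some twin class meets two arcs at cyclic distance $\ge n/(2M)$. More strongly, since the classes cover the circle, I would aim to find a chain of classes containing pairs of points spread around the circle. Each swap between twins $x,x'$ lets us replace a $G_0$-edge at $x$ by an actual $G$-edge at $x'$. Composing such swaps with $G_0$-edges, I would build in $G$ a closed odd walk of length $O(M)$ through large twin classes. Because twin classes in $G$ are genuine blowup parts (complete to common neighbourhoods), a short odd closed walk through classes of linear size yields a blown-up short odd cycle, together with the surrounding $G_0$-structure. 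From this I want to embed $H$.

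The main obstacle is exactly this last embedding step. A blowup of a short odd cycle contains $H$ only if $H$ maps homomorphically to that cycle, which fails for general $H$ (for example $K_4$, or $H$ of large chromatic number). So I expect to need maximality more seriously. The plan is to choose $G_0$ so that it already contains, around every vertex, a dense $(\chi(H)-1)$-partite-like gadget: for instance, $G_0$ is the join of the circular graph with a balanced complete $(r-2)$-partite graph on a linear set, where $r=\chi(H)$. This keeps $G_0$ $H$-free, since any copy of $H$ would require the circular part to host a $3$-chromatic piece of $H$ with short odd cycles. Then the odd walk created by twin merging, joined to the complete $(r-2)$-partite part and blown up, contains a blowup of $C_{\ell}\vee K_{r-2}$ with $\ell$ short and all parts linear. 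Such a blowup contains every $r$-chromatic graph whose odd girth in a $3$-colourable "residue" is at least $\ell$. Controlling $\ell$ relative to $h$, via the choice $K\gg M$ and the scale $\beta$, is where I expect the real work to be. If the walk-length bookkeeping fails, my fallback is a supersaturation/removal argument: show that $\Omega(n^{h})$ "near-copies" of $H$ arise after twin identifications, which forces a genuine copy in $G$.
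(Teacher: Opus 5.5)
Your reduction is sound: a blowup of an $m$-vertex graph $F$ has at most $m$ distinct neighbourhoods, so it suffices to find $\alpha_0>0$ such that, for every $M$, some maximal $H$-free graph with $\delta\ge\alpha_0 n$ has more than $M$ distinct neighbourhoods. The gap is in the step that carries all the weight. You need that an arbitrary maximal $H$-free completion $G$ of $G_0$ cannot collapse to boundedly many twin classes. This is not proved, and for your $G_0$ it is false.

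Your $G_0$ is the circular clique $K_{p/q}$ with $p/q\approx 2+4\beta$. If $(2\ell+1)\beta\le 1/2$, then sending each vertex to the index of its arc, in a partition of $\mathbb Z_n$ into $2\ell+1$ equal arcs, is a homomorphism $G_0\to C_{2\ell+1}$. Now take $H=K_3$ and any maximal triangle-free graph $F^\ast\supseteq C_{2\ell+1}$ on the same $2\ell+1$ vertices, and blow it up along the arcs. The result contains $G_0$, is triangle-free, and is maximal triangle-free: two vertices in one part share a neighbour since $\delta(F^\ast)\ge 2$, and vertices in non-adjacent parts share one by maximality of $F^\ast$. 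Yet it has only $2\ell+1=O(1/\beta)$ distinct neighbourhoods, for every $n$.

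Your walk argument hits the same wall from the inside. With $M$ twin classes, the odd closed walks you can build have length of order $M$. That exceeds the odd girth of $H$ exactly in the regime $M\gg 1/\beta$ that your reduction needs, so nothing forces a copy of $H$. The supersaturation fallback does not address this. What is missing is a construction whose neighbourhood complexity provably survives maximal completion.

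The gadget for $\chi(H)=r\ge 4$ also fails as stated. $G_0$ contains $K_{\beta n/2,\beta n/2}$ between two antipodal arcs, so joining $G_0$ with a complete $(r-2)$-partite graph on a linear set produces a large blowup of $K_r$, and hence a copy of $H$; you would need $r-3$ classes. But no construction is needed in that case. Completing an $H$-free graph to a maximal one does not lower the minimum degree, and a bounded blowup is boundedly colourable. Hence $\delta_{\mathrm B}(H)\ge\delta_\chi(H)\ge\frac{r-3}{r-2}>0$.

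The paper handles the remaining case $\chi(H)=3$ by applying the same completion principle to VC-dimension. A blowup of an $m$-vertex graph has VC-dimension at most $\log_2 m$, so $\delta_{\mathrm B}(H)\ge\delta_{\mathrm{VC}}(H)$. Theorem~\ref{thm:vc-general-lower-bound} then gives $\delta_{\mathrm{VC}}(H)>\frac{r-3}{r-2}=0$. The genuine construction work, namely maximal $H$-free graphs of linear minimum degree with unbounded neighbourhood complexity, lives inside that theorem, and it is exactly the ingredient your proposal does not supply.
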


\begin{remark}
The case \(\chi(H)\ge 4\) follows immediately from
\(\delta_{\mathrm{B}}(H)\ge \delta_\chi(H)>0\). The main point is therefore the
3-chromatic case, where the chromatic threshold may vanish. This positivity can
also be deduced from Theorem~\ref{thm:vc-general-lower-bound}, since a blowup of a bounded graph has bounded
VC-dimension, and hence
$\delta_{\mathrm{B}}(H)\ge \delta_{\mathrm{VC}}(H)>0$ when $\chi(H)=3$.
\end{remark}

The third difference is structural. Unlike the chromatic threshold, the blowup
threshold is not monotone as a graph parameter.

\begin{theorem}[Huang--Liu--Rong~\cite{huang_liu_rong_blowup_thresholds}]
\label{thm:blowup-nonmonotone}
The blowup threshold is not monotone under taking induced subgraphs.
\end{theorem}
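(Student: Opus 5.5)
We will prove the statement by exhibiting an explicit pair \(H_1\subseteq H_2\), with \(H_1\) an induced subgraph of \(H_2\), such that
\[
    \delta_{\mathrm{B}}(H_1)>\delta_{\mathrm{B}}(H_2).
\]
The natural candidates are the complete tripartite graphs already used for the homomorphism and VC thresholds, namely \(H_1=K_{1,s,s}\) and \(H_2=K_{1,s,c}\) with \(c\) much larger than \(s\) (say \(c\ge 2s-2\)). Clearly \(K_{1,s,s}\) is an induced subgraph of \(K_{1,s,c}\). The plan has two halves: a lower bound for the smaller graph and an upper bound for the larger one.

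\textbf{Lower bound for \(H_1\).} We use the remark after Theorem~\ref{thm:blowup-positive}. If every maximal \(H\)-free graph above density \(\alpha\) is a blowup \(F[\cdot]\) of a fixed bounded \(F\), then its neighbourhood system has at most \(|V(F)|\) distinct sets, so its VC-dimension is at most \(\log_2|V(F)|\). Hence
\[
    \delta_{\mathrm{B}}(H)\ge \delta_{\mathrm{VC}}(H)
\]
for every \(H\). By Theorem~\ref{thm:vc-complete-tripartite} (the ``otherwise'' case, since \(s\ge 3\) and \(c=s<2s-2\)),
\[
    \delta_{\mathrm{B}}(K_{1,s,s})\ge \delta_{\mathrm{VC}}(K_{1,s,s})=\tfrac12.
\]

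\textbf{Upper bound for \(H_2\).} It remains to show that \(\delta_{\mathrm{B}}(K_{1,s,c})<1/2\), aiming for \(s/(2s+1)\), which matches its VC threshold. We mimic the proof of Theorem~\ref{thm:hom-triangle}.

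1. Let \(G\) be maximal \(K_{1,s,c}\)-free with \(\delta(G)\ge (s/(2s+1)+\varepsilon)n\). By Theorem~\ref{thm:vc-complete-tripartite}, \(\mathrm{VC}(G)\le d(\varepsilon)\).
2. Apply Lemma~\ref{lem:haussler-packing-graph} with \(s'=\eta n\) for a small \(\eta\) depending on \(\varepsilon\). This gives a bounded partition in which vertices of the same part have neighbourhoods differing in at most \(2\eta n\) vertices.
3. Show that the partition is homogeneous, i.e.\ each part is independent and each pair of parts is complete or empty. This replaces Claim~\ref{claim:triangle-large-codegree}. For a non-edge \(uw\), maximality gives a copy of \(K_{1,s,c}\) minus an edge through \(u,w\). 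One then needs a robust version: either \(u,w\) have linearly many common neighbours, or there are linearly many such near-copies. The conclusion would be that every edge \(u'w'\) in the same pair of parts, whose neighbourhoods nearly coincide with those of \(u\) and \(w\), completes a \(K_{1,s,c}\).

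Independence of each part is the analogous statement for a pair \(u,v\) in the same part. Their nearly identical large neighbourhoods contain, by the minimum-degree and Kővári--Sós--Turán-type counting, a \(K_{s,c}\)-like configuration, so an edge \(uv\) would create \(K_{1,s,c}\). The quotient \(F\) is then \(K_{1,s,c}\)-free, since blowups of \(F\) contain any copy in \(F\), and it has bounded order.

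\textbf{Main obstacle.} The hardest step is 3, the robust-codegree substitute. Being maximal \(K_{1,s,c}\)-free only guarantees one near-copy, whereas we need linearly many, with the densities chosen so that the degree condition \(s/(2s+1)\) forces the required \(s\)-sets and \(c\)-sets in common neighbourhoods. We would first try a supersaturation or averaging argument using \(c\ge 2s-2\). If the exact value \(s/(2s+1)\) is too hard to reach, any bound strictly below \(1/2\) suffices for the non-monotonicity statement. A crude such bound might come from using \(\delta(G)\ge(1/2-\varepsilon')n\) together with codegree estimates.
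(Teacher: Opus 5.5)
\textbf{The upper bound for $H_2$ is false, so this pair cannot witness non-monotonicity.} Your lower bound is fine: a blowup of a bounded $F$ has at most $|V(F)|$ distinct neighbourhoods, so $\delta_{\mathrm B}\ge\delta_{\mathrm{VC}}$, and hence $\delta_{\mathrm B}(K_{1,s,s})\ge 1/2$ for $s\ge 3$. The claim $\delta_{\mathrm B}(K_{1,s,c})<1/2$, however, is not merely unproven.

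Take $s\ge 2$ and $c\ge s$. Let $G$ be $K_{n/2,n/2}$ with sides $A,B$, plus vertex-disjoint copies of $K_s$ tiling each side. For $a\in A$ we have $N(a)=B\cup N_A(a)$ with $|N_A(a)|=s-1$. Consider a copy of $K_{s,c}$ inside $N(a)$ with parts $S,C$. Neither part fits inside $N_A(a)$, so $S\cap B$ and $C\cap B$ are both nonempty and $|S\cap B|+|C\cap B|\ge c+1>s$. But these two sets are complete to each other in $G[B]$, so they lie in a single $K_s$-component, which is a contradiction. Hence $G$ is $K_{1,s,c}$-free.

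$G$ is also maximal. Adding $xy$ inside $A$ gives $x$ exactly $s$ neighbours in $A$, and these, with apex $x$ and any $c$ vertices of $B$, form a $K_{1,s,c}$.

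Since $\delta(G)=n/2+s-1$ and all $n$ vertices have distinct neighbourhoods, $G$ is a blowup of no graph on fewer than $n$ vertices. So $\delta_{\mathrm B}(K_{1,s,c})\ge 1/2$. On the other hand, every connected $3$-chromatic $H$ has $\delta_{\mathrm B}(H)\le 1/2$: by Erd\H{o}s--Stone, $H$-free graphs with $\delta\ge(1/2+\varepsilon)n$ have bounded order, and one can take $F$ to be their disjoint union. Therefore $\delta_{\mathrm B}(K_{1,s,s})=\delta_{\mathrm B}(K_{1,s,c})=1/2$.

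Neither your target $s/(2s+1)$ nor your fallback ``any bound below $1/2$'' is attainable. More generally, no $K_{a,b,c}$ with $b\ge 2$ can play the role of $H_2$: for $a\ge 2$, Theorem~\ref{thm:vc-complete-tripartite} already gives $\delta_{\mathrm{VC}}=1/2$, and for $a=1$ the construction above applies.

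\textbf{Where Step 3 breaks.} The same example shows exactly which step fails. $G$ has VC-dimension at most $s$, so your Steps 1 and 2 go through. Any two vertices of $A$ have neighbourhoods differing in at most $2s$ vertices, so the packing partition lumps many of them together. Two vertices in the same $K_s$-component are then adjacent near-twins with about $n/2$ common neighbours, yet no $K_{1,s,c}$ arises. Your K\H{o}v\'ari--S\'os--Tur\'an independence claim fails because the common neighbourhood $B$ contains no $K_{s,c}$ at all.

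Likewise, a non-edge $xy$ inside $A$ has only $O(n^{c})$ completions, essentially all routed through the fixed sets $N_A(x)$ or $N_A(y)$. Your transfer argument needs $\Omega(n^{s+c-1})$, so the robust-codegree substitute is false here.

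So for $K_{1,s,c}$, bounded VC-dimension does not upgrade to exact blowup structure, and the gap between $\delta_{\mathrm{VC}}=s/(2s+1)$ and $\delta_{\mathrm B}=1/2$ is real. The survey does not reproduce the Huang--Liu--Rong proof; it only indicates that their non-monotone examples sit around the value $1/2$. A correct argument needs two things: a different supergraph $H_2$ whose structure rules out all such ``complete bipartite plus sparse inside the sides'' maximal constructions, and a genuine exactification argument strictly below $1/2$.
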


Thus the blowup threshold differs
from the chromatic threshold in three complementary ways: as a graph parameter
it is not monotone, as a spectrum it has new exact values, and as a complexity
measure it is richer even within the class of 3-chromatic forbidden graphs.
Consequently, the blowup threshold cannot be understood by a direct analogue of
the monotonicity-based strategy used for chromatic thresholds.

We end this section by recording several natural open problems about the
spectrum of blowup thresholds raised in \cite{huang_liu_rong_blowup_thresholds}.  
Recall that
\[
    \Delta_{\mathrm{B}}
    :=
    \{\delta_{\mathrm{B}}(H):\chi(H)\ge 3\},
\]
and, for each integer \(r\ge 3\),
\[
    \Delta_{\mathrm{B}}^{(r)}
    :=
    \{\delta_{\mathrm{B}}(H):\chi(H)=r\}.
\]
We say that a value \(\alpha\) is an \emph{accumulation point} of
\(\Delta_{\mathrm{B}}\) if there exists a sequence of graphs
\(\{H_n\}_{n\ge 1}\) such that the values \(\delta_{\mathrm{B}}(H_n)\) are
pairwise distinct and
\[
    \lim_{n\to\infty}\delta_{\mathrm{B}}(H_n)=\alpha.
\]

The exact values currently recorded above include
$\left\{\frac{1}{m}:m\ge 3\right\}$,
combining the odd-cycle values \(1/(2k-1)\), the clique value \(\delta_{\mathrm{B}}(K_3)=1/3\), and the even-denominator values
\(1/(2k)\) above. The value \(1/2\) also occurs in the examples of~\cite{huang_liu_rong_blowup_thresholds}, but it is not supplied by the preceding two displayed families. Hence \(0\) is an accumulation point of
\(\Delta_{\mathrm{B}}\). More generally, one may ask whether the lower endpoints
coming from the chromatic-threshold classification also occur as accumulation
points.

\begin{conj}\label{conj:blowup-accumulation-endpoints}
For every integer \(m\ge 2\), the value $\frac{m-1}{m}$
is an accumulation point of \(\Delta_{\mathrm{B}}\). 
\end{conj}

A stronger possibility is that the blowup-threshold spectrum is dense in a
nontrivial interval.

\begin{conj}\label{conj:blowup-spectrum-interval}
There exists a non-degenerate interval \(I\subseteq (0,1]\) such that, for every
rational number \(q\in I\), there exists a graph \(H_q\) satisfying
\[
    \delta_{\mathrm{B}}(H_q)=q.
\]
\end{conj}

Even the 3-chromatic slice of the spectrum is far from understood. The new
values \(1/(2k)\), the odd-cycle values \(1/(2k-1)\), and the non-monotonicity
phenomenon all occur already for 3-chromatic forbidden graphs.

\begin{problem}\label{prob:blowup-spectrum-three-chromatic}
Determine the set
\[
    \Delta_{\mathrm{B}}^{(3)}
    =
    \{\delta_{\mathrm{B}}(H):\chi(H)=3\}.
\]
\end{problem}

Among possible values in \(\Delta_{\mathrm{B}}^{(3)}\), the value \(1/2\) is
especially natural: it is the largest possible chromatic threshold for
3-chromatic forbidden graphs, it occurs as a blowup threshold in~\cite{huang_liu_rong_blowup_thresholds}, and the non-monotonicity examples also appear around this value.

\begin{problem}\label{prob:blowup-half}
Determine necessary and sufficient conditions for a 3-chromatic graph \(H\) to
satisfy
\[
    \delta_{\mathrm{B}}(H)=\frac12.
\]
\end{problem}


\section{Minimum-degree removal lemma thresholds}\label{sec:removal}

A recurring philosophy behind the threshold problems in this survey is that \emph{density forces structure}.  For chromatic and homomorphism thresholds, one asks when a linear minimum-degree condition already forces a bounded colouring or a bounded homomorphic image.  A closely related robust version asks whether the same local density condition improves removal and supersaturation estimates.
A good example comes from the graph removal lemma, one of the central results in extremal graph theory with numerous applications (for instance to additive combinatorics and property testing); see~\cite{fox2023minimum,gishboliner2024minimum} for background and further references.
In its standard form, it asserts that for every fixed graph $H$ and every $\varepsilon>0$, there exists $\delta=\delta(\varepsilon,H)>0$ such that every $n$-vertex graph $G$ with fewer than $\delta n^{v(H)}$ copies of $H$ can be made $H$-free by removing at most $\varepsilon n^2$ edges.

While the statement is clean, the quantitative dependence of $\delta$ on $\varepsilon$ is notoriously poor: for many non-bipartite graphs $H$ the best known bounds are far from polynomial, and in the clique case the general upper bound coming from regularity-type proofs is of tower type in $\log(1/\varepsilon)$.
More broadly, Alon~\cite{alon2002testing} showed that the removal parameter $\delta(\varepsilon,H)$ is polynomial in $\varepsilon$ \emph{if and only if} $H$ is bipartite, so without any additional assumptions one cannot hope for polynomial bounds in the non-bipartite case.
This motivates the following question asked by Fox and Wigderson~\cite{fox2023minimum}.
\begin{question}\label{ques: removal threshold}
    Does imposing a minimum-degree condition on the host graph $G$ improve the removal lemma bounds?
\end{question}

Following Fox and Wigderson~\cite{fox2023minimum}, it is convenient to work with a supersaturation-type formulation.
For $\gamma\in[0,1]$, define $\delta(\varepsilon,H;\gamma)$ to be the maximum $\delta\in[0,1]$ with the property that whenever $G$ is an $n$-vertex graph with
\[
\delta(G)\ge \gamma n
\quad\text{and}\quad
\text{$G$ contains at least $\varepsilon n^2$ edge-disjoint copies of $H$,}
\]
then $G$ contains at least $\delta\, n^{v(H)}$  copies of $H$.
Thus the usual removal lemma corresponds to $\gamma=0$, i.e. no minimum-degree assumption.

The central point is that, above an appropriate minimum-degree proportion, one can force \emph{polynomial} (or even \emph{linear}) dependence of $\delta(\varepsilon,H;\gamma)$ on $\varepsilon$.
This leads to two threshold parameters.

\begin{definition}[Minimum-degree removal thresholds~\cite{fox2023minimum}]
Let $H$ be a fixed graph.
\begin{itemize}
\item The \emph{linear removal threshold} of $H$, denoted $\delta_{\mathrm{lin\text{-}rem}}(H)$, is the infimum over all $\gamma$ such that there exists $\mu=\mu(\gamma,H)>0$ with
\[
\delta(\varepsilon,H;\gamma)\ge \mu\,\varepsilon
\qquad\text{for all }\varepsilon>0.
\]
\item The \emph{polynomial removal threshold} of $H$, denoted $\delta_{\mathrm{poly\text{-}rem}}(H)$, is the infimum over all $\gamma$ such that there exists $\mu=\mu(\gamma,H)>0$ with
\[
\delta(\varepsilon,H;\gamma)\ge \mu\,\varepsilon^{1/\mu}
\qquad\text{for all }\varepsilon>0.
\]
\end{itemize}
In particular, $\delta_{\mathrm{lin\text{-}rem}}(H)\ge \delta_{\mathrm{poly\text{-}rem}}(H)$ holds trivially.
\end{definition}

Fox and Wigderson~\cite{fox2023minimum} initiated the systematic study of these thresholds and determined them for cliques.

\begin{theorem}[Fox, Wigderson~\cite{fox2023minimum}]\label{thm:FW-clique-removal-threshold}
For every $r\ge 3$,
\[
\delta_{\mathrm{lin\text{-}rem}}(K_r)=\delta_{\mathrm{poly\text{-}rem}}(K_r)=\frac{2r-5}{2r-3}.
\]
Moreover, above this threshold one obtains linear bounds in the $K_r$ removal lemma, while below it the bounds are again super-polynomial (as in the unrestricted clique removal lemma).
\end{theorem}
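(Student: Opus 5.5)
The plan is to prove the upper bound \(\delta_{\mathrm{lin\text{-}rem}}(K_r)\le \frac{2r-5}{2r-3}\) through the homomorphism and blowup structure of Theorem~\ref{thm:hom-cliques} and Theorem~\ref{thm:blowup-cliques}, and to obtain the lower bound from a Hajnal-type construction combined with a Ruzsa--Szemer\'edi / Behrend-type graph. Since \(\delta_{\mathrm{lin\text{-}rem}}\ge\delta_{\mathrm{poly\text{-}rem}}\), it suffices to prove the upper bound for the linear threshold and the lower bound for the polynomial one.

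\textbf{Upper bound.} Fix \(\gamma=\frac{2r-5}{2r-3}+\eta\). Let \(G\) be a graph with \(\delta(G)\ge\gamma n\) containing \(\varepsilon n^2\) edge-disjoint copies of \(K_r\). The aim is to show that \(G\) contains \(\mu\varepsilon n^r\) copies of \(K_r\). The key local fact is that the minimum degree makes every edge (and every small clique) extend robustly. For \(r=3\) this is simple: if \(d(u)+d(v)>n\), then \(|N(u)\cap N(v)|\ge(2\gamma-1)n\). That holds only above \(1/2\), whereas the threshold is \(1/3\), so the argument must be more global.

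\begin{enumerate}
\item Let \(E_0\) be the set of edges lying in at most \(\beta n^{r-2}\) copies of \(K_r\), with \(\beta=c\varepsilon\). If fewer than \(\varepsilon n^2/2\) of the edge-disjoint copies use an edge of \(E_0\), then at least \(\varepsilon n^2/2\) edges lie in \(\ge\beta n^{r-2}\) copies each. Counting gives \(\gtrsim\varepsilon\cdot\beta n^r/\binom r2\) copies, which is quadratic in \(\varepsilon\), not linear. The threshold for \(\beta\) therefore has to be a constant depending only on \(\gamma\), not on \(\varepsilon\).
\item The core claim is a dichotomy. There is \(\beta=\beta(\gamma,r)>0\) such that, in any \(G\) with \(\delta(G)\ge\gamma n\), deleting the edges lying in fewer than \(\beta n^{r-2}\) copies of \(K_r\) leaves a graph that is \(K_r\)-free. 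Granting this, each of the \(\varepsilon n^2\) edge-disjoint copies contains an edge lying in \(\ge\beta n^{r-2}\) copies. Each copy of \(K_r\) is counted at most \(\binom r2\) times, so there are \(\ge\beta\varepsilon n^r/\binom r2\) copies, which is the required linear bound.
\item To prove the dichotomy, suppose a copy \(K\) of \(K_r\) survives, so all its edges are ``light''. Let \(G'\) be the graph with the light edges removed; then \(G'\) contains few copies of \(K_r\) locally. The plan is to reduce to a \(K_r\)-free graph of slightly smaller minimum degree by deleting \(o(n^2)\) edges while keeping a minimum-degree condition \(\ge(\gamma-\eta/2)n\) on most vertices. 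Then, by Theorem~\ref{thm:hom-cliques} and Theorem~\ref{thm:blowup-cliques}, a maximal completion is a blowup of a bounded \(K_r\)-free graph \(F\). Adding back \(K\), each vertex of \(K\) has a neighbourhood of size \(\ge\gamma n\) that is a union of parts of the blowup. A minimum-degree counting over the bounded template, in the style of Andr\'asfai--Erd\H{o}s--S\'os, should then force an edge of \(K\) to extend to \(\Omega(n^{r-2})\) cliques through parts of linear size, contradicting lightness.
\end{enumerate}

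The main obstacle is step~3. Edge deletion does not preserve the minimum degree, and the blowup theorem requires a global \(K_r\)-free, maximal structure. The fix I would try first is to delete only the \(O(\beta n^2)\) light edges that lie in few cliques (rather than all edges of cliques), observe that vertices losing more than \(\eta n/4\) degree are few, and treat them separately.

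\textbf{Lower bound.} For \(\gamma<\frac{2r-5}{2r-3}\), take the Hajnal-type construction for \(K_r\): a blow-up of a \(K_{r-3}\) together with a dense triangle-free high-chromatic Kneser piece and a sparse bipartite part. Embed into it a Ruzsa--Szemer\'edi graph on a linear-size vertex set, which has \(\varepsilon n^2\) edge-disjoint triangles but only \(\varepsilon^{\omega(1)}n^3\) triangles in total, via Behrend's construction. Joining it to the \(K_{r-3}\) blow-up gives \(\varepsilon n^2\) edge-disjoint copies of \(K_r\) but only super-polynomially few copies overall, while keeping \(\delta\ge\gamma n\). This mirrors the chromatic-threshold lower bound for cliques.
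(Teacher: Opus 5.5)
\textbf{Verdict: genuine gap in the upper bound (Step~3), plus a lower-bound construction that does not work as described.} The survey only cites this theorem, so there is no proof here to compare with. Your Step~2 is the popular-edge mechanism recorded in \Cref{sec:conclusion} (\(\delta_{\mathrm{lin\text{-}rem}}\le\delta_{\mathrm{pop\text{-}edge}}\)), and your count \(\beta\varepsilon n^r/\binom{r}{2}\) is right.

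\textbf{Step~2 is stated backwards.} ``Deleting the light edges leaves a \(K_r\)-free graph'' says every \(K_r\) contains a \emph{light} edge. What you need, and what Step~3 tries to contradict, is that the graph formed by the light edges is \(K_r\)-free.

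\textbf{Step~3 cannot work.} The popular-edge property must hold for every \(G\) with \(\delta(G)\ge\gamma n\). This includes \(K_n\) and the complete balanced \(r\)-partite graph, which have \(\Theta(n^2)\) edge-disjoint copies of \(K_r\). Such \(G\) is not \(o(n^2)\)-close to \(K_r\)-free, and any \(K_r\)-free subgraph loses the minimum-degree hypothesis on all vertices that Theorems~\ref{thm:hom-cliques} and~\ref{thm:blowup-cliques} require. Deleting only light edges leaves every clique of heavy edges intact. There is also no \(O(\beta n^2)\) bound on the number of light edges: in the complete balanced \((r-1)\)-partite graph, which lies above the threshold, every edge is light. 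Finally, even with a blowup structure for some \(K_r\)-free subgraph or its completion, the \(G\)-neighbourhoods of the vertices of \(K\) need not be unions of its parts, so the closing template count is unsupported.

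\textbf{What is missing is an elementary iterated codegree count;} no structure theorem is needed. The basic fact: if \(K\) is a copy of \(K_m\) in an \(N\)-vertex graph with minimum degree at least \((\frac{m-2}{m}+\eta)N\), then \(\sum_{v\in K}d(v)\ge(m-2)N+m\eta N\). This forces at least \(\eta N\) vertices with \(m-1\) neighbours in \(K\), so some \((m-1)\)-subset of \(K\) has at least \(\eta N/m\) common neighbours. The iteration runs as follows.
\begin{enumerate}
\item Suppose \(A\subseteq K\) with \(|A|=r-j\) lies in at least \(\beta_j n^j\) copies \(A\cup R\), where \(R\) is a \(j\)-clique. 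Start with \(A=K\), \(j=0\).
\item Each vertex has at most \((1-\gamma)n\) non-neighbours. Hence \(G[N(R)]\) has at least \((1-j(1-\gamma))n\) vertices and minimum degree at least \(|N(R)|-(1-\gamma)n\).
\item Its relative minimum degree therefore exceeds \(\frac{r-j-2}{r-j}\) as soon as \((r+j)(1-\gamma)<2\). For all \(j\le r-3\) this follows from \(\gamma>\frac{2r-5}{2r-3}\). The last step \(j=r-3\), a triangle inside \(N(R)\), is exactly where the threshold binds.
\item Apply the basic fact to \(A\subseteq N(R)\) and pigeonhole the resulting \((r-j-1)\)-set \(A'\subseteq A\) over all \(R\). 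Each copy through \(A'\) arises from at most \(j+1\) choices of \(R\), so \(A'\) lies in at least \(\beta_{j+1}n^{j+1}\) copies.
\end{enumerate}
After \(r-2\) steps you reach an edge of \(K\) lying in \(\Omega_{\gamma,r}(n^{r-2})\) copies, which is the dichotomy you need.

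\textbf{The lower-bound construction fails as described.} In Hajnal's graph, adjacent Kneser vertices get disjoint neighbourhoods of size about \(n/3\) only because disjoint near-halves of the ground set cannot come in threes, i.e.\ the Kneser graph is triangle-free. A Ruzsa--Szemer\'edi graph has many triangles. The three vertices of each need pairwise almost disjoint attachment sets of size about a third of the triangle part; otherwise each RS edge lies in \(\Theta(n)\) further triangles and the count becomes polynomial in \(\varepsilon\). Hajnal's graph cannot provide this, and high chromatic number plays no role here.

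A construction that works:
\begin{enumerate}
\item Take a blown-up tripartite RS graph on classes \(W_1,W_2,W_3\) of total size \(\theta x\).
\item Join each \(W_i\) completely to an independent set \(Y_i\) of size \((1-\theta)x/3\), and make \(Y_1\cup Y_2\) complete to \(Y_3\). The only triangles are RS triangles, and the minimum degree is at least \((1-\theta)x/3\).
\item Join this to a complete \((r-3)\)-partite graph with parts of size \(2x/3\). This gives minimum-degree ratio \(\frac{2r-5-\theta}{2r-3}\), and the copies of \(K_r\) are exactly RS triangles plus transversals.
\item A greedy choice extends the edge-disjoint RS triangles to \(\Omega(\varepsilon n^2)\) edge-disjoint copies of \(K_r\).
\end{enumerate}
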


Very recently, Gishboliner, Jin and Sudakov~\cite{gishboliner2024minimum} determined these thresholds for odd cycles (and more generally gave a complete description of $\delta_{\mathrm{lin\text{-}rem}}(H)$ for all $3$-chromatic $H$).

\begin{theorem}[Gishboliner, Jin, Sudakov~\cite{gishboliner2024minimum}]\label{thm:GJS-oddcycle-removal-threshold}
For every integer $k\ge 1$,
\[
\delta_{\mathrm{poly\text{-}rem}}(C_{2k+1})=\frac{1}{2k+1}.
\]
Furthermore, if $\chi(H)=3$ then
\[
\delta_{\mathrm{lin\text{-}rem}}(H)=
\begin{cases}
\frac12, & \text{$H$ has no critical edge},\\[2pt]
\frac13, & \text{$H$ has a critical edge and contains a triangle},\\[2pt]
\frac14, & \text{$H$ has a critical edge and odd-girth$(H)\ge 5$.}
\end{cases}
\]
\end{theorem}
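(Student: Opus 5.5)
The statement has two parts: the odd-cycle polynomial threshold, and the three-case classification of \(\delta_{\mathrm{lin\text{-}rem}}(H)\) for \(3\)-chromatic \(H\). In each case I would prove an upper bound by a structural argument that extracts many copies of \(H\) from one copy, and a lower bound by planting a sparse Ruzsa--Szemer\'edi-type configuration inside a dense template.

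\emph{Upper bound for \(\delta_{\mathrm{poly\text{-}rem}}(C_{2k+1})\).} Fix \(\gamma=\frac{1}{2k+1}+\beta\). Let \(G\) have \(\delta(G)\ge\gamma n\) and \(\varepsilon n^2\) edge-disjoint copies of \(C_{2k+1}\).
\begin{enumerate}
\item I would argue via the homomorphism-threshold results for short odd cycles (Theorem~\ref{thm:hom-short-odd-cycles}) together with a robust, regularity-free version of them. The expected robust statement is a dichotomy: either \(G\) is \(\mathrm{poly}(\varepsilon)\)-close to a graph of odd girth \(\ge 2k+3\), or \(G\) contains a bounded-size odd closed walk of length \(\le 2k+1\) whose vertices have \(\mathrm{poly}(\varepsilon)\)-large common-neighbourhood structure.
\item Because \(G\) contains \(\varepsilon n^2\) edge-disjoint copies, it is not close to odd girth \(\ge 2k+3\), so we are in the second case.
\item Concretely, I would sample a set \(S\) of \(\mathrm{poly}(1/\varepsilon)\) vertices and classify the remaining vertices by their neighbourhood traces on \(S\). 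The minimum-degree condition above \(\frac{1}{2k+1}\) then forces short odd walks between classes in the style of Andr\'asfai, as in the Claim in the proof of Theorem~\ref{thm:hom-triangle}, where \(\delta>n/3\) gave codegree \(3\varepsilon n\).
\item Each such walk through classes of linear size yields \(\mathrm{poly}(\varepsilon)n^{2k+1}\) copies of \(C_{2k+1}\) by counting, since short odd walks of length less than \(2k+1\) can be padded using edges back and forth.
\end{enumerate}

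\emph{Lower bound for \(\delta_{\mathrm{poly\text{-}rem}}(C_{2k+1})\).} Below \(\frac{1}{2k+1}\) I would take an Andr\'asfai-type graph of odd girth \(2k+1\) with degree ratio just under \(\frac{1}{2k+1}\), as in the lower-bound examples for Theorem~\ref{thm:hom-short-odd-cycles}. Into a linear-size portion I would plant a Behrend-based Ruzsa--Szemer\'edi construction. The planted part has \(\varepsilon n^2\) edge-disjoint copies of \(C_{2k+1}\) but only \(\varepsilon^{\omega(1)}n^{2k+1}\) copies in total. The check to make is that the Andr\'asfai frame creates no additional copies that use planted edges; odd-girth considerations in the frame should prevent this.

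\emph{Linear thresholds for \(3\)-chromatic \(H\).}
\begin{enumerate}
\item Upper bounds \(\tfrac12,\tfrac13,\tfrac14\). Above \(1/2\), every edge lies in linearly many triangles and the graph is "locally dense", so each of the \(\varepsilon n^2\) edge-disjoint copies can be extended to \(\Omega(n^{v(H)-2})\) copies. When \(H\) has a critical edge \(e\), note that \(H-e\) is bipartite. I would fix a copy, take its critical edge, and use the degree condition (\(>1/3\) if \(H\) contains a triangle, \(>1/4\) if \(H\) has odd girth \(\ge5\)) to show that a positive fraction of edges sit in linearly many short odd cycles with rich neighbourhoods. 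Supersaturation for the bipartite graph \(H-e\) (Kővári--Sós--Turán type counting) then gives \(\mu\varepsilon n^{v(H)}\) copies.
\item Lower bounds. For \(H\) without a critical edge, I would plant a Ruzsa--Szemer\'edi graph inside a near-complete bipartite graph of degree \(\approx n/2\). Every copy of \(H\) must use two planted edges, which makes the count superlinear-small in \(\varepsilon\). For the \(1/3\) and \(1/4\) cases I would use analogous plantings into a blowup of \(C_5\), respectively an odd-girth-\(7\)-type frame, designed so that every copy of \(H\) (respectively every triangle-free-forced copy) uses at least two planted edges.
\end{enumerate}

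The main obstacle is the odd-cycle polynomial upper bound: turning the approximate, density-based homomorphism structure into quantitative polynomial counting without regularity. I would first try a VC/packing-style argument as in the proof of Theorem~\ref{thm:hom-triangle}, applied to a sampled bounded set, and fall back on weak-regularity counting with polynomial parameters if that fails.
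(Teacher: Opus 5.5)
The survey only quotes this theorem from Gishboliner--Jin--Sudakov, so there is no in-paper proof to compare with. Judged on its own, your proposal has genuine gaps on both the upper-bound and lower-bound sides.

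\textbf{Upper bounds.} The decisive gap is $\delta_{\mathrm{poly\text{-}rem}}(C_{2k+1})\le \frac{1}{2k+1}$.
\begin{itemize}
\item Theorem~\ref{thm:hom-short-odd-cycles} concerns graphs that already have odd girth at least $2k+3$. It says nothing about a graph containing $\varepsilon n^2$ edge-disjoint copies of $C_{2k+1}$, and possibly many shorter odd cycles.
\item The ``robust dichotomy'' of your step~1 (either $G$ is $\mathrm{poly}(\varepsilon)$-close to odd girth $\ge 2k+3$, or it contains a robust short odd closed walk) is essentially a polynomial removal lemma for $\{C_3,\dots,C_{2k+1}\}$ under the same minimum-degree hypothesis. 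That is the statement to be proved, so the reduction is circular.
\item Steps~3--4 do not supply it. The analogy with Claim~\ref{claim:triangle-large-codegree} breaks down: that claim uses maximality (every non-edge has a common neighbour $w$) and triangle-freeness ($N(w)$ is disjoint from $N(u)\cup N(v)$). Neither is available for your $G$, so nothing in your sketch actually produces the constant $\frac{1}{2k+1}$.
\item Classes defined by neighbourhood traces on a sampled set carry no density or regularity between consecutive classes. Padding by back-and-forth steps gives closed walks, not cycles. Turning walks into $\mathrm{poly}(\varepsilon)n^{2k+1}$ genuine cycles needs structure that trace classes do not provide and that regularity-type tools provide only with super-polynomial losses.
\end{itemize}

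The $\frac13$ and $\frac14$ upper bounds have the same problem in another form. You need a per-copy statement, e.g.\ every one of the $\varepsilon n^2$ edge-disjoint copies has an edge lying in $\Omega(n^{v(H)-2})$ copies of $H$. That is a \emph{rooted} count of $H-e$, with the two ends of the critical edge $e$ (which lie on the same side of $H-e$) pinned to a given edge $xy$. K\H{o}v\'ari--S\'os--Tur\'an supersaturation counts unrooted copies and says nothing about this. The constants $\frac13$ and $\frac14$ must come from controlling common neighbourhoods, respectively walks of length $4$, between $x$ and $y$, and your sketch does not do this. The $\frac12$ upper bound, by contrast, is immediate: $\delta(G)\ge(\frac12+\mu)n$ puts the edge density above the Tur\'an density of any $3$-chromatic $H$, so supersaturation gives $\Omega(n^{v(H)})$ copies regardless of $\varepsilon$.

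\textbf{Lower bounds.} The frames for the $\frac13$ and $\frac14$ cases fail as described.
\begin{itemize}
\item A balanced blowup of $C_5$ has minimum degree $\frac{2n}{5}>\frac n3$, so no sparse planting into it can beat linear removal. Concretely, any two non-adjacent vertices of a $C_5$-blowup have at least $n/5$ common neighbours. So each planted edge lies in $\Theta(n)$ triangles using only that one planted edge, and for $H=K_3$ you get $\Theta(\varepsilon n^3)$ triangles.
\item A $C_7$-type frame has minimum degree $\frac{2n}{7}>\frac n4$. It already contains $\Theta(n^7)$ copies of $C_7$, which belongs to the $\frac14$ class. Moreover, every planted edge between non-adjacent frame vertices lies in $\Theta(n^3)$ five-cycles.
\end{itemize}

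The $\frac14$ lower bound is in fact uniform over all $3$-chromatic $H$. Take two disjoint copies of $K_{n/4,n/4}$ and add random edges of density $p$ between their vertex sets. Every odd cycle must cross between the two copies an even, positive number of times, so every copy of $H$ uses at least two planted edges. This gives $O(p^2n^{v(H)})$ copies against $\Omega(pn^2)$ edge-disjoint ones.

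For $\frac13$ you need a frame of minimum degree just below $n/3$ in which, at the very least, the ends of planted edges have essentially no common frame neighbours. Your $\frac12$ construction is fine in spirit: a random planted graph of density $p$ inside one side works, and Ruzsa--Szemer\'edi is unnecessary there.

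In the odd-cycle lower bound, a frame of odd girth $2k+1$ already contains $C_{2k+1}$. You need odd girth at least $2k+3$ and minimum degree just below $\frac{n}{2k+1}$; a balanced $C_{2k+3}$-blowup, with degree $\frac{2n}{2k+3}>\frac{n}{2k+1}$, is again too dense. The check you defer is the heart of that construction. If the ends of a planted edge are joined by $\Omega(n^{2k-1})$ frame paths of length $2k$, that single edge lies in $\Omega(n^{2k-1})$ copies of $C_{2k+1}$, and the total becomes linear in $\varepsilon$. So the attachment of the Ruzsa--Szemer\'edi part must be designed to rule this out; the odd girth of the frame alone does not do it.
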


These results illustrate a striking ``threshold'' phenomenon parallel to chromatic and homomorphism thresholds:
once the minimum degree crosses a critical value, one can often replace the tower-type (or worse) quantitative behaviour in a general extremal lemma by much more efficient, polynomial-type bounds.

Although $\delta_{\mathrm{lin\text{-}rem}}(H)$ is now completely determined when $\chi(H)=3$, the polynomial threshold $\delta_{\mathrm{poly\text{-}rem}}(H)$ is known in far fewer cases beyond odd cycles.
It would be interesting to ask the following problems.

\begin{problem}\label{prob: more values on poly-rem}
    Do there exist more values beyond $\{\frac{1}{2k-1}\}_{k\ge2}$ for $\delta_{\mathrm{poly\text{-}rem}}(H)$ when $\chi(H) = 3$?
\end{problem}\label{ques: more values on poly-rem}

\begin{problem}\label{prob: determine larger chi for lin-rem}
    Determine $\delta_{\mathrm{lin\text{-}rem}}(H)$ for $\chi(H)\ge 4$.
\end{problem}

Fox and Wigderson~\cite{fox2023minimum} also proposed several broader problems
connecting minimum-degree removal thresholds with the chromatic and
homomorphism thresholds.
They suggest that the chromatic, homomorphism,
linear-removal and polynomial-removal thresholds should not satisfy any general
ordering beyond the tautological inequalities
\[
    \delta_\chi(H)\le \delta_{\mathrm{hom}}(H)
    \qquad\text{and}\qquad
    \delta_{\mathrm{poly\text{-}rem}}(H)
    \le
    \delta_{\mathrm{lin\text{-}rem}}(H).
\]

\begin{conj}[Fox--Wigderson~\cite{fox2023minimum}]
\label{conj:four-thresholds-distinct}
There exists a graph \(H\) for which the four quantities
\[
    \delta_\chi(H),\quad
    \delta_{\mathrm{hom}}(H),\quad
    \delta_{\mathrm{lin\text{-}rem}}(H),\quad
    \delta_{\mathrm{poly\text{-}rem}}(H)
\]
are all distinct.
\end{conj}



\section{Further connections and future directions}
\label{sec:conclusion}

The threshold parameters surveyed above are best viewed as different ways in
which a local density assumption can force global bounded structure.  A useful
schematic list is
\[
    \delta_\chi,\qquad
    \delta_{\mathrm{hom}},\qquad
    \delta_{\mathrm{VC}},\qquad
    \delta_{\mathrm{B}},\qquad
    \delta_{\mathrm{poly\text{-}rem}},\qquad
    \delta_{\mathrm{lin\text{-}rem}}.
\]
The chromatic threshold asks for bounded colourability; the homomorphism
threshold asks for a bounded forbidden-template image; the VC and blowup
thresholds ask for bounded neighbourhood complexity or exact finite-template
structure; and the removal thresholds ask for improved quantitative
supersaturation.  The examples in this survey suggest that this hierarchy is
conceptual rather than monotone: apart from the formal inequalities
\[
\delta_\chi(H)\le \delta_{\mathrm{hom}}(H)
\le \delta_{\mathrm{B}}(H),\ 
\delta_{\mathrm{VC}}(H)\le \delta_{\mathrm{B}}(H),
\ 
\delta_{\mathrm{poly\text{-}rem}}(H)\le
\delta_{\mathrm{lin\text{-}rem}}(H).
\]
there seems to be no universal ordering without additional hypotheses.

\paragraph{Approximate templates and removal.}
Approximate homomorphisms provide a natural bridge between homomorphism
thresholds and removal thresholds.  Fox and Zhao~\cite{fox_zhao_approx_homomorphisms_2022}
showed that every triangle-free graph admits an \(\varepsilon\)-approximate
homomorphism to a bounded triangle-free graph, where only
\(O(\varepsilon n^2)\) edges are allowed to fail.  From this viewpoint,
minimum-degree homomorphism thresholds ask when such an approximate
bounded-template description can be made exact.  The asymmetric approximate
homomorphism results of Gishboliner--Hurley--Wigderson~\cite{gishboliner_hurley_wigderson_asymmetric_2025}
point in the same direction.

A related local certificate appears in the work of Fox and
Wigderson~\cite{fox2023minimum}.  They define the \emph{popular edge threshold}
\(\delta_{\mathrm{pop\text{-}edge}}(H)\) as the infimum of all \(\gamma\) such
that, for some \(\beta>0\), every copy of \(H\) in every graph
\(G\) with \(\delta(G)\ge \gamma |V(G)|\) contains an edge lying in at least
\(\beta |V(G)|^{v(H)-2}\) copies of \(H\).  This condition is strong enough to
imply linear removal bounds, and hence
\[
    \delta_{\mathrm{lin\text{-}rem}}(H)
    \le
    \delta_{\mathrm{pop\text{-}edge}}(H).
\]
It would be interesting to understand when this sufficient condition is also
close to necessary.

\paragraph{Second-order density and popular non-edges.}
The proof mechanisms behind the homomorphism, VC and blowup thresholds often use
more than ordinary minimum degree.  For instance, in a maximal triangle-free
graph above the \(1/3\) threshold, every non-edge has a large common
neighbourhood.  Wang--Yang--Zhao~\cite{wang_yang_zhao_minimum_common_degree}
showed that this second-order condition has independent force: if a
triangle-free graph \(G\) on \(n\) vertices satisfies
\[
    \delta_2^\circ(G):=
    \min_{xy\notin E(G)} |N_G(x)\cap N_G(y)|
    >\left\lfloor \frac n8\right\rfloor,
\]
then \(G\) is homomorphic to \(C_5\), strengthening H\"aggkvist's theorem.

There is an edge-side analogue in the theory of books and joints.  The booksize
of a graph is the maximum number of triangles sharing a common edge, and more
generally one may ask how many copies of a fixed graph can share a common edge.
Fox and Loh~\cite{fox_loh_edges_triangles} studied a problem of
Erd\H{o}s and Rothschild on dense graphs in which every edge lies in a triangle,
showing that below the Tur\'an density \(1/4\) one cannot in general force a
polynomial-size book.  This suggests a dual viewpoint for maximal \(H\)-free
graphs: popular edges in removal problems should be compared with \emph{popular
non-edges}.  If \(uv\notin E(G)\), let \(\mathrm{comp}_H^G(u,v)\) denote the
number of labelled ways in which adding \(uv\) completes a copy of \(H\).  Then
maximal \(H\)-freeness is the qualitative condition
\(\mathrm{comp}_H^G(u,v)>0\) for every non-edge, while a quantitative analogue
would ask for
\[
    \mathrm{comp}_H^G(u,v)
    \ge \eta |V(G)|^{v(H)-2}
    \qquad\text{for every }uv\notin E(G).
\]
This leads to a possible ``popular non-edge'' threshold for forcing bounded
homomorphic images, bounded VC-dimension, or exact blowup structure.

\paragraph{Robust second-order and trigraph complexity.}
The exact blowup arguments suggest robust variants in which a small set of
non-edges is allowed to have poor completion behaviour.  For example, in the
triangle-free case one may define
\[
    D_\tau(G):=
    \{xy\notin E(G): |N_G(x)\cap N_G(y)|<\tau |V(G)|\}.
\]
The case \(D_\tau(G)=\emptyset\) is the ideal second-order hypothesis; a robust
version would ask whether sparsity, bounded chromatic number, or bounded
complexity of \(D_\tau(G)\) still forces approximate or exact bounded-template
structure.  This is closely related to the Dense Neighborhood Lemma of
Bourneuf--Charbit--Thomass\'e~\cite{bourneuf_charbit_thomasse_dnl_2025}, whose
partial-concept-class and trigraph framework is designed to handle situations
where the ordinary VC-dimension may be unbounded but the relevant defective
relation is controlled.  A natural refinement is therefore to replace the usual
neighbourhood system \(\{N_G(v):v\in V(G)\}\) by a second-order system such as
\[
    \mathcal N_2^\circ(G)
    :=\{N_G(x)\cap N_G(y):xy\notin E(G)\},
\]
and to ask whether bounded complexity of this system suffices for the packing
and exactification arguments used in the blowup-threshold theory.

\paragraph{Stability, spectra and neighbouring template theories.}
Several adjacent theories suggest further problems.  Minimum-degree stability,
studied by Illingworth~\cite{illingworth_min_degree_stability_2023}, asks when
large minimum degree forces an \(H\)-free graph to be \(o(n^2)\)-close to a
Tur\'an-type multipartite structure.  Recent stability results near the
chromatic threshold~\cite{kim_stability_2025} and the edge-density/minimum-degree
tradeoff of Wu and Xue~\cite{wu_xue_edge_density_min_degree_2025} indicate that
one should also seek stability theorems for \(\delta_{\mathrm{hom}}\),
\(\delta_{\mathrm{VC}}\), \(\delta_{\mathrm B}\), and the removal thresholds.
Finite-template viewpoints also appear in graph-limit language, where bounded blowups correspond to finite-step
graphons~\cite{LovaszSzegedy}.  On the topological side, Sankar's odd-cycle
obstructions are naturally connected with the broader theory of Hom-complexes
\cite{babson_kozlov_hom_complexes_2006}.  Finally, coloured and arithmetic
analogues, such as the two-colour threshold problem of
Gao--Liu--Wu--Xue~\cite{gao_liu_wu_xue_chromatic_pairs_2026} and chromatic
thresholds for linear equations~\cite{liu2026chromaticthresholdslinearequations},
suggest that the principle ``local density forces bounded complexity'' extends
beyond ordinary forbidden-subgraph problems.

We close with a short list of directions which seem especially close to the
thresholds discussed in this survey.

\begin{problem}[Exactifying approximate templates]
Determine conditions under which an approximate homomorphism from an \(H\)-free
graph to a bounded \(H\)-free graph can be upgraded to an exact homomorphism.
In particular, can minimum-degree assumptions, popular non-edge conditions, or
lower bounds on \(\mathrm{comp}_H^G(u,v)\) force such an upgrade?
\end{problem}

\begin{problem}[Popular non-edge thresholds]
For which graphs \(H\) does a positive lower bound on
\[
    \min_{uv\notin E(G)}\mathrm{comp}_H^G(u,v)
\]
force bounded homomorphic images, bounded VC-dimension, or bounded blowup
structure for maximal \(H\)-free graphs?  How does the resulting threshold
compare with \(\delta_{\mathrm{hom}}(H)\), \(\delta_{\mathrm{VC}}(H)\), and
\(\delta_{\mathrm B}(H)\)?
\end{problem}

\begin{problem}[Robust second-order blowup]
Let \(G\) be a maximal triangle-free graph and let
\(D_\tau(G)=\{xy\notin E(G): |N_G(x)\cap N_G(y)|<\tau |V(G)|\}\).  If
\(|D_\tau(G)|=o(n^2)\), or if \(D_\tau(G)\) has bounded trigraph complexity, must
\(G\) be close to a blowup of a bounded triangle-free graph?  More generally,
can ordinary VC-dimension in the current blowup arguments be replaced by a
second-order or trigraph VC-dimension?
\end{problem}

\begin{problem}[Spectra and comparison]
Determine the spectra
\[
    \Delta_{\mathrm{hom}}^{(r)},\qquad
    \Delta_{\mathrm{VC}}^{(r)},\qquad
    \Delta_{\mathrm B}^{(r)}
\]
for fixed \(r\ge 3\), and decide which inequalities between the homomorphism,
VC, blowup, polynomial-removal and linear-removal thresholds can hold.  In
particular, are there intervals or dense rational subsets in any of these
spectra, and can one find examples where all non-tautologically related
thresholds are distinct?
\end{problem}
\bibliographystyle{abbrv}
\bibliography{Survey_Homomorphism_updated}


%

\end{document}